\documentclass[11pt]{amsart}
\usepackage[utf8]{inputenc}
\usepackage[a4paper,top=3cm,bottom=3cm,left=3cm,right=3cm,marginparwidth=1.5cm]{geometry}
\usepackage[british]{babel}
\usepackage{graphicx}
\usepackage{url}
\usepackage{amsmath}
\usepackage{mathtools}
\usepackage{mwe}
\usepackage{amssymb}
\usepackage{float}
\usepackage{eucal} 
\usepackage{amsthm}
\usepackage{mathtools}
\usepackage{xcolor}
\usepackage{amsfonts}
\usepackage{hyperref}
\usepackage[nameinlink]{cleveref}
\usepackage{bbm} 
\usepackage{adjustbox} 
\usepackage{enumitem} 
\setlist[enumerate]{ label=(\alph*) }
\usepackage{quiver}

\usepackage[backend=biber,style=alphabetic, giveninits=false, urldate=long, sorting=nyt, useprefix=true, url=false, doi=false, isbn=false, maxalphanames=99, maxbibnames=99]{biblatex}
\DeclareFieldFormat[article,incollection, misc, book, phdthesis, inproceedings]{title}{#1}
\renewbibmacro{in:}{}

\DeclareMathOperator{\Z}{\mathbb{Z}}

\DeclareMathOperator{\im}{Im}
\DeclareMathOperator{\Ker}{Ker}

\DeclareMathOperator{\id}{id}
\DeclareMathOperator{\Spec}{Spec}
\DeclareMathOperator{\Spc}{Spc}
\DeclareMathOperator{\thick}{thick}
\DeclareMathOperator{\Hom}{Hom}

\DeclareMathOperator{\fgmod}{mod}
\DeclareMathOperator{\supp}{supp}

\DeclareMathOperator{\Loc}{Loc}
\DeclareMathOperator{\op}{op}
\DeclareMathOperator{\gen}{gen}
\DeclareMathOperator{\unit}{\mathbbm{1}}

\DeclareMathOperator{\D}{D}
\DeclareMathOperator{\perf}{perf}
\DeclareMathOperator{\K}{K}

\DeclareMathOperator{\fp}{fp}

\DeclareMathOperator{\SH}{SH}

\DeclareMathOperator{\stab}{stab}
\DeclareMathOperator{\Stab}{Stab}
\DeclareMathOperator{\perm}{perm}
\DeclareMathOperator{\Res}{Res}
\DeclareMathOperator{\Ind}{Ind}
\DeclareMathOperator{\DPerm}{DPerm}
\DeclareMathOperator{\Infl}{Infl}
\DeclareMathOperator{\Inj}{Inj}
\DeclareMathOperator{\Proj}{Proj}

\newcommand{\mc}{\mathcal}
\newcommand{\mf}{\mathfrak}
\newcommand{\mb}{\mathbb}

\newcommand{\Mod}{\mathrm{Mod}}
\newcommand{\overbar}[1]{\mkern 1.5mu\overline{\mkern-3mu#1\mkern-3mu}\mkern 1.5mu}

\numberwithin{equation}{section}

\theoremstyle{plain}
\newtheorem{theorem}[equation]{Theorem}
\newtheorem*{theorem*}{Theorem}
\newtheorem{proposition}[equation]{Proposition}
\newtheorem{lemma}[equation]{Lemma}
\newtheorem{corollary}[equation]{Corollary}

\AddToHook{env/theorem/begin}{\crefalias{equation}{theorem}}
\AddToHook{env/proposition/begin}{\crefalias{equation}{proposition}}
\AddToHook{env/lemma/begin}{\crefalias{equation}{lemma}}
\AddToHook{env/corollary/begin}{\crefalias{equation}{corollary}}

\theoremstyle{definition}
\newtheorem{definition}[equation]{Definition}
\newtheorem*{definition*}{Definition}

\newtheorem{example}[equation]{Example}
\newtheorem{notation}[equation]{Notation}

\AddToHook{env/definition/begin}{\crefalias{equation}{definition}}
\AddToHook{env/notation/begin}{\crefalias{equation}{notation}}
\AddToHook{env/example/begin}{\crefalias{equation}{example}}
\AddToHook{env/examples/begin}{\crefalias{equation}{examples}}

\theoremstyle{remark}
\newtheorem{hypothesis}[equation]{Hypothesis}
\newtheorem{construction}[equation]{Construction}
\newtheorem{recollection}[equation]{Recollection}
\newtheorem{remark}[equation]{Remark}

\crefname{recollection}{Recollection}{Recollections}

\AddToHook{env/hypothesis/begin}{\crefalias{equation}{hypothesis}}
\AddToHook{env/construction/begin}{\crefalias{equation}{construction}}
\AddToHook{env/recollection/begin}{\crefalias{equation}{recollection}}
\AddToHook{env/remark/begin}{\crefalias{equation}{remark}}
\AddToHook{env/remarks/begin}{\crefalias{equation}{remarks}}
\AddToHook{env/conventions/begin}{\crefalias{equation}{conventions}}

\title{Residual regularity in tensor triangular geometry}

\author{Emmy Van Rooy}
\address{Emmy Van Rooy, UCLA Mathematics Department, Los Angeles, CA 90095-1555, USA}
\email{emmyvr@ucla.edu}
\urladdr{http://www.math.ucla.edu/~emmyvr}
\date{\today}

\begin{document}
\begin{abstract}
   We investigate a new notion of regularity for tensor triangulated categories, called residual regularity. We show that residual regularity descends and ascends via finite separable extensions and we classify all finite groups whose derived category of permutation modules is residually regular. 
\end{abstract}

\maketitle
\tableofcontents

\section*{Introduction}

In \cite{BalmerOG}, Balmer associates to any essentially small tensor triangulated category $\mc K$ a geometric space $\Spc(\mc K)$ in a universal way. This space $\Spc(\mc K)$ has since been referred to as the \emph{Balmer spectrum} of $\mc K$, and the resulting theory is named tensor triangular geometry --- tt-geometry for short. Since then, tt-geometry has become a well-established technique to study tensor triangulated categories (abbreviated similarly as tt-categories). Among many other things, tt-geometry allows one to classify objects in the essentially small tt-category $\mc K$ that are the same up to the basic operations available in~$\mc K$: taking suspensions, cones, direct sums and summands or tensoring with objects. In fact, this classification is equivalent to the computation of the Balmer spectrum $\Spc(\mc K)$. 
Furthermore, the theory covers a large class of specialised, and remarkably distinct, areas of mathematics, ranging from algebra to analysis. To name a few, it applies to algebraic geometry,  
stable homotopy theory, modular representation theory, motivic theory and noncommutative topology.
In this way, tt-geometry gives a unifying approach to many fields of mathematics.
We refer to the surveys \cite{ICMpaper, homotopyhandbook, Stevensontour} for an overview of the basic theory, its virtues and its applications.

Often, the essentially small tt-category $\mc K$ sits inside a larger, but still orderly, tt-category $\mc T$. For example, think of perfect complexes $\D^{\perf}(R)$ over a commutative ring $R$ inside the unbounded derived category $\D(R)$; or the Spanier-Whitehead category of finite pointed CW-complexes $\SH^{\text{fin}}$ inside the stable homotopy category of topological spectra $\SH$. More precisely, $\mc T$ is a \emph{rigidly-compactly generated tt-category} and $\mc K = \vcentcolon \mc T^c$ is its subcategory of rigid-compact objects. Such a tt-category $\mc T$ is referred to as a `big' tt-category.  
The subcategory $\mc T^c$ is always an essentially small tt-category, so we can consider its Balmer spectrum $\Spc(\mc T^c)$. 
Approaching the study of $\mc T$ through the tt-geometry of $\mc T^c$ is a rich topic, and this is precisely the context that we will be in today.

A common theme in tt-geometry is to transpose ideas and techniques between the various fields it brings together.  
Taking inspiration from algebraic geometry in particular has often proved fruitful. In algebraic geometry, \emph{regularity} is a well-studied condition on Noetherian schemes, and regular points are, in a precise sense, better behaved than non-regular (i.e.\@ singular) ones. 
Let us, with the power of foresight, recall the following characterisation of regularity for a commutative Noetherian ring $R$: a point $\mathfrak{p} \in \Spec(R)$ is regular if and only if the residue field $R_\mathfrak{p}/\mathfrak{p}R_\mathfrak{p}$ is perfect over the local ring $R_{\mathfrak{p}}$, i.e.\@ it is a compact object in the category $\D(R_{\mathfrak{p}})$.
In this work we investigate a notion of regularity for `big' tt-categories, dubbed \emph{residual regularity}, which mirrors this characterisation of classical regularity in commutative algebra. 

The road to residual regularity is paved by a type of \emph{residue objects} that are afforded to us through the framework of \emph{homological residue fields} \cite{BKSruminations}. These homological residue fields are homological symmetric monoidal functors 
\begin{equation*}
    \mathbbm{h}_{\mc H} \colon \mc T \to \mc A(\mc T)_{\mc H},
\end{equation*}
from our `big' tt-category $\mc T$ to various tensor abelian categories $\mc A(\mc T)_{\mc H}$. The parameter $\mc H$ ranges over a set called the \emph{homological spectrum} of $\mc T$, which is denoted by $\Spc^{\mathrm{h}}(\mc T^c)$. The homological spectrum is very close to the Balmer spectrum in the sense that there is a canonical map
\begin{equation*}
    \pi_{\mc T} \colon \Spc^{\mathrm{h}}(\mc T^c) \to \Spc(\mc T^c),
\end{equation*}
that is always surjective and is known to be bijective for many standard tt-categories across algebraic geometry, modular representation theory and stable homotopy theory. However, it should be noted that it can fail to be injective, as was recently shown in \cite{nervesofsteelcounterex}. The theory of homological residue fields has already shown its mettle: they give rise to a tensor triangular nilpotence theorem \cite{Balmernilpotence} and a support theory for non-compact objects of `big' tt-categories \cite{bigsupport}. 

The homological residue fields furnish us with the aforementioned residue objects as follows: in $\mc A(\mc T)_{\mc H}$ the tensor unit $\overline{\unit}$ admits an injective hull $\mathbbm{h}_{\mc H}(\kappa(\mc H))$ that comes from a unique object $\kappa(\mc H)$ in $\mc T$ via the functor~$\mathbbm{h}_{\mc H}$. These residue objects $\kappa(\mc H)$ are in general \emph{non-compact} objects of $\mc T$. They play the role of the classical residue fields in commutative algebra and indeed recapture exactly those in the case of the derived category of a commutative ring. 
The tt-analogue of regularity presents itself: 
\begin{definition*}
    A `big' tt-category $\mc T$ is called \emph{residually regular} if for every point $\mc P \in \Spc(\mc T^c)$, there exists an $\mc H \in \pi_{\mc T}^{-1}(\{\mc P\})$ such that the residue object $\kappa(\mc H)$ is a compact object in the local category $\mc T_{\mc P} \vcentcolon= \mc T/\Loc(\mc P)$. 
\end{definition*}
This definition was proposed by Paul Balmer in private communication. It follows immediately that residual regularity can be checked locally on the Balmer spectrum. Furthermore, by construction, a Noetherian scheme $X$ is regular if and only if its `big' derived category $\D(X)= \D_{\mathrm{Qcoh}}(X)$ is residually regular. Using the extensive theory of homological residue fields, one can show that residual regularity of $\mc T$ implies that the map $\pi_{\mc T}$ is in fact a bijection and that the Balmer spectrum $\Spc(\mc T^c)$ is a \emph{weakly Noetherian} topological space. This already excludes the stable homotopy category of spectra $\SH$ from being residually regular, as its Balmer spectrum is not weakly Noetherian; see \Cref{stable homotopy cat}.

There have been various efforts in the literature to generalise regularity to other settings, such as e.g.\@ \cite{singcats,BIKP, Krausecompletion, kostas, Neeman2026, Orlov, Rouquier} for triangulated categories and \cite{BarwickLawson, GregGreenlees} for differential graded algebras and structured ring spectra. This is also intimately related to the search for an analogue of the bounded derived category of a Noetherian scheme for more general triangulated categories; see \cite{Neeman2026, Rouquier}. We return to this topic in \Cref{inthelit}, but the definitive relationship of these other notions to residual regularity will require further investigation.

In modular representation theory, the most well-known `big' tt-categories are arguably the stable module category $\Stab(kG)$ and the homotopy category of injectives $\K\Inj(kG)$, for a finite group $G$ and a field $k$. In recent years, another actor has entered the scene, the \emph{derived category of permutation $kG$-modules}, denoted by $\DPerm(G;k)$. 
Permutation $kG$-modules are easy to grasp: they are simply the $k$-linearisations of $G$-sets. Informally then, the `big' tt-category $\DPerm(G;k)$ is a certain localisation of the homotopy category of all permutation $kG$-modules, and it admits the categories $\K\Inj(kG)$ and $\Stab(kG)$ as further localisations. Permutation modules are closely related to equivariant homotopy theory, Mackey functors and Artin motives; see \cite{BGconnections}. The tt-geometry of these categories $\DPerm(G;k)$ has been studied extensively by Balmer--Gallauer in \cite{TT-perm}.

Our main result classifies all finite groups whose derived category of permutation modules is residually regular: 
\begin{theorem*}[\Cref{modular rep theorem}]
    Let $G$ be a finite group and $k$ a field of positive characteristic $p$ dividing the order of $G$. The derived category of permutation $kG$-modules $\DPerm(G;k)$ is residually regular if and only if $p=2$ and all Sylow $p$-subgroups of $G$ are cyclic. 
\end{theorem*}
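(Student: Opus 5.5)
The plan is to reduce to the Sylow $p$-subgroup and then analyse the local categories of $\DPerm(P;k)$ one prime at a time, using the explicit description of $\Spc(\DPerm(G;k)^c)$ from \cite{TT-perm}.

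\textbf{Reduction to $p$-groups.} Let $P\le G$ be a Sylow $p$-subgroup. Restriction $\DPerm(G;k)\to\DPerm(P;k)$ is a finite separable extension: it is extension-of-scalars along the tt-ring $A=k(G/P)$, which is separable of degree $[G:P]$, invertible in $k$. By the abstract's result that residual regularity ascends and descends along finite separable extensions, it suffices to treat $G=P$ a $p$-group.

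\textbf{Local structure for a $p$-group.} By Balmer--Gallauer, $\Spc(\DPerm(P;k)^c)$ is the union, over conjugacy classes of subgroups $H\le P$, of copies of $V_{W_P(H)}=\Spc(\D^b(k(W_PH)))$, glued via modular fixed points $\Psi^H$. Residual regularity is checked locally at each point $\mc P$. At a point in the stratum indexed by $H$, the modular fixed-point functor identifies the local category with a localisation of a local category of $\DPerm(W_PH;k)$ at a point of its ``derived'' (cohomological) part. So the question reduces to closed and generic points of the $\K\Inj$ part, for all subquotients $W_PH$. At such a point the candidate residue object is known: at the unique closed point it is roughly $k$, and at other points a Rickard-type idempotent tensor $k$. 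One then computes whether it is compact in the local category.

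\textbf{The two directions.} For \emph{sufficiency} ($p=2$, $P=C_{2^n}$ cyclic), every Weyl subquotient is cyclic of $2$-power order. The spectrum is explicit: a finite zigzag of $2n+1$ points. Localising at each point, the local category should become equivalent to either $\D(k)$ or a localisation in which the unit is built from $k$ in finitely many steps. For $C_2$, the Koszul-type triangle $k(C_2)\to k(C_2)\to k$ holds only when $p=2$, where $k\oplus k[1]$ appears with the right sign. That shows $\kappa$ is compact, namely a cone of a map between permutation modules. Induction on $n$ via $\Psi^{C_2}$ and inflation should handle $C_{2^n}$.

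For \emph{necessity}, I exhibit a non-regular point. If $P$ is non-cyclic, $P$ has an elementary abelian quotient or subquotient $E=C_p^2$. Via the separable descent and modular fixed points, it suffices to show $\DPerm(E;k)$ fails at the closed point of the cohomological open. There the local category contains $\K\Inj(kE)$ localised at the irrelevant ideal, and compactness of $\kappa$ would force $k$ to have finite projective dimension over a local ring of dimension $2$, as in Serre's theorem. If $p$ is odd and $P$ is cyclic, reduce to $C_p$. There the local category at the closed cohomological point involves $H^*(C_p;k)=k[x]\otimes\Lambda(y)$ with $|y|=1$. The exterior factor makes $k$ of infinite projective dimension, a singular point, whereas for $p=2$ the cohomology $k[x]$ is polynomial with $|x|=1$.

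\textbf{Expected obstacle.} The main obstacle is the precise identification of the residue object $\kappa(\mc H)$ at each point, together with the compactness criterion in the local category $\mc T/\Loc(\mc P)$. For this I would first try to prove a general lemma. It should say that compactness of $\kappa(\mc H)$ in $\mc T_{\mc P}$ is equivalent to the graded endomorphism ring of the unit at $\mc P$ being ``regular local''. I would then verify that criterion from the known cohomology computations.
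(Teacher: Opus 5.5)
\textbf{Main gap: the necessity direction targets points where residual regularity actually holds.}

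Your reduction to a Sylow $p$-subgroup is the paper's. You should add that $\varphi_{\Res^G_P}$ is surjective, since that is what lets you pass from $P$ back to $G$. The failure is in the necessity direction: you look for non-regularity at the closed point of the cohomological open. That point is $\mc M(1)=\ker(\Res^G_1)$, its stalk is $\K\Inj(kG)$, and $\DPerm(G;k)$ is residually regular there for \emph{every} $G$ and $p$. Indeed, $\Res^G_1$ lands in the tt-field $\D(k)$, and $\kappa(\mc M(1))$ is a direct summand of $\Ind_1^G(\unit)=kG$, which is compact.

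Your Serre-type argument silently replaces the residue object by the unit $k$. At the closed point of $\K\Inj(kG)$ the residue object is a summand of $kG$, not $k$. Even on its own terms the heuristic fails for $p=2$, because $H^\bullet(C_2\times C_2;k)=k[x,y]$ is a polynomial ring.

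Likewise, for $C_p$ with $p$ odd, both points of the cohomological open are regular. At $\mc M(1)$ this is the argument above. At the generic point the stalk is the tt-field $\Stab(kC_p)$. So the exterior class in $H^\bullet(C_p;k)$ detects nothing.

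The same examples refute your structural claim that $\Psi^H$ identifies the stalk at a point of the $H$-stratum with a stalk of $\DPerm(W_PH;k)$ at a cohomological point. Since $\mc M(H)$ is the image of $\mc M(1)$ of the Weyl group, that claim would make every $\mc M(H)$ regular, and the theorem would be false. Your proposed criterion via the graded endomorphism ring of the unit, applied as you apply it, would also wrongly declare $\mc M(1)$ singular for $C_p$.

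\textbf{What is missing.} The failures sit at the closed points $\mc M(H)$ with $H\neq 1$, outside the cohomological open. What you need is a concrete handle on $\kappa(\mc M(G))$ for a $p$-group $G$. It equals $\Psi^G_\rho(\unit)$. An object $S$ is isomorphic to it as soon as two conditions hold:
\begin{enumerate}
\item $\Hom(k(G/H)[i],S)$ is $k$ for $H=G$, $i=0$, and zero otherwise;
\item some $\zeta\colon\unit\to S$ has $\Psi^G(\zeta)$ split mono.
\end{enumerate}
This gives the compact model $k\to kC_2\to k$ for $C_2$. (No exact triangle $kC_2\to kC_2\to k$ exists; Euler characteristic forbids it.) It also gives eventually periodic models for $C_p$ with $p$ odd, and for $C_2\times C_2$. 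These are not compact, because $\Psi^G$ of them has infinitely many nonzero cohomology groups. Since $\mc M(G)$ is closed and visible, compactness in the stalk would force compactness in $\DPerm(G;k)$.

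You then transport non-regularity along restriction to subgroups. Every non-cyclic $2$-group contains $C_2\times C_2$ or $Q_8$, and every $p$-group with $p$ odd contains $C_p$. $Q_8$ is handled by the stalk equivalence at $\mc M$ induced by inflation from its Frattini quotient $C_2\times C_2$.

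The sufficiency induction likewise does not go through $\Psi^{C_2}$. It uses two steps:
\begin{enumerate}
\item restriction to $C_{2^{n-1}}$, which covers all but two points;
\item inflation from the Frattini quotient, which identifies the stalk at $\mc M(C_{2^n})$ (and hence also at the point below it) with the stalk of $\DPerm(C_2;k)$ at $\mc M(C_2)$.
\end{enumerate}
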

For the worried reader, we mention that none of the points at which we prove that residual regularity of $\DPerm(G;k)$ fails `come from' $\Stab(kG)$ (or $\K\Inj(kG)$ for that matter); all points of $\Stab(kG)$ are either residually regular or unknown. Indeed, as $\Stab(kG)^c$ is a \emph{strongly generated} triangulated category, it is the expectation that $\Stab(kG)$ is residually regular for any choice of finite group $G$ and field $k$.

A core part of the proof of this classification is a technical result concerning \emph{separable extensions of tt-categories}, as introduced in \cite{tt-rings}. 
These are special kinds of functors between tt-categories which can be thought of as an analogue of étale extensions in algebraic geometry. Separable extensions are ubiquitous in tt-geometry. In modular representation theory, the restriction to any subgroup gives rise to a separable extension at the level of the derived category of permutation modules. The above classification follows grosso modo by doing some hands-on computations for specific small groups and then applying the following result to the separable extensions induced by restriction to these groups: 
\begin{theorem*}[\Cref{the big one}]
    Let $F \colon \mc T \to \mc S$ be a finite separable extension of finite degree and let $\mc Q \in \Spc(\mc S^c)$. Suppose that $\pi_{\mc S} \colon \Spc^{\mathrm{h}}(\mc S^c) \to \Spc(\mc S^c)$ is a bijection and that $\Spc(\mc S^c)$ is Noetherian. Then $\mc S$ is residually regular at the point $\mc Q$ if and only if $\mc T$ is residually regular at the point $ \mc P \vcentcolon= \varphi_{F}(\mc Q)$, where $\varphi_{F} \colon \Spc(\mc S^c) \to \Spc(\mc T^c)$.  
\end{theorem*}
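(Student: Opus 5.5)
The plan is to reduce both sides to local statements and compare residue objects along $F$ and its right adjoint. First, since residual regularity is local, we localise. Set $\mc P=\varphi_F(\mc Q)$. The finite separable extension $F$ is of the form $\mc S\simeq \Mod_{\mc T}(A)$ for a separable commutative ring object $A\in\mc T^c$ of finite degree, with $F=A\otimes-$ and right adjoint $U$ (restriction of scalars). Passing to local categories, $F$ induces a finite separable extension $F_{\mc P}\colon\mc T_{\mc P}\to\mc S_{\mc P}:=\mc S/\Loc(F(\mc P))$, with $\mc S_{\mc P}\simeq\Mod_{\mc T_{\mc P}}(A_{\mc P})$. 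Because $\Spc(\mc S^c)$ is Noetherian and the fibre $\varphi_F^{-1}(\{\mc P\})$ is finite and discrete (going-up/incomparability for separable extensions of finite degree), $\mc S_{\mc P}$ decomposes, after a further finite localisation, so that compactness in $\mc S_{\mc P}$ may be tested in $\mc S_{\mc Q}$ at the single point $\mc Q$. Here I would use that the primes of $\Spc(\mc S_{\mc P}^c)$ over the closed point of $\mc T_{\mc P}$ are exactly the finitely many closed points of the fibre. Hence I may assume $\mc P$ is the unique closed point of $\mc T$, and $\mc Q$ is a closed point of $\mc S$ lying over it.

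Second, I compare homological residue fields. Since $\pi_{\mc S}$ is bijective, there is a unique $\mc H'$ over $\mc Q$. Its image $\mc H=\varphi^{\mathrm h}_F(\mc H')$ lies over $\mc P$. I expect the result from the homological residue field literature (Balmer–Cameron) that homological primes lift along separable extensions, with $\varphi^{\mathrm h}$ compatible with $\varphi$. The key claim is a comparison of residue objects:
\begin{enumerate}
\item $U(\kappa(\mc H'))$ is a finite direct sum of copies of $\kappa(\mc H)$, up to summands; more precisely, $\kappa(\mc H)$ is a direct summand of $U\kappa(\mc H')$.
\item Conversely, $F(\kappa(\mc H))$ decomposes as a finite sum of objects supported on the fibre, and its $\mc Q$-component contains $\kappa(\mc H')$ as a summand.
\end{enumerate}
To prove these, I would use that $\mathbbm h_{\mc H'}\circ F$ and $\mathbbm h_{\mc H}$ are related by an exact functor of abelian categories $\mc A(\mc T)_{\mc H}\to\mc A(\mc S)_{\mc H'}$ induced by $\hat F$, and that the adjoint preserves injectives. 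Then $\mathbbm h(U\kappa(\mc H'))$ is an injective object of $\mc A(\mc T)_{\mc H}$ receiving a nonzero map from $\overline{\unit}$ (via the unit $\unit\to UF\unit=A$). Separability gives a retraction $A\to\unit$ of $A$-bimodules, which splits off $\overline{\unit}$'s hull. Uniqueness of the lift of injectives to $\mc T$ then gives (a); (b) is analogous, using $FU\simeq A\otimes-$.

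Third, I conclude. Both $F$ and $U$ preserve compacts, since $A$ is compact and rigid. If $\kappa(\mc H)$ is compact in $\mc T_{\mc P}$, then $F\kappa(\mc H)$ is compact, and so is its summand $\kappa(\mc H')$ in $\mc S_{\mc Q}$, using (b) and the localisation step. Conversely, if $\kappa(\mc H')$ is compact, then $U\kappa(\mc H')$ is compact and so is its summand $\kappa(\mc H)$, by (a). One more point needs care. Residual regularity of $\mc T$ at $\mc P$ only asks for some $\mc H$ over $\mc P$, so for the converse direction I must ensure that the witnessing $\mc H$ is hit by $\varphi^{\mathrm h}_F$. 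For this I would show that compactness of one $\kappa(\mc H)$ over $\mc P$ forces $\pi_{\mc T}^{-1}(\mc P)=\{\mc H\}$, via the consequence of regularity noted in the introduction.

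The main obstacle is the second step, specifically the splitting and identification of $U\kappa(\mc H')$ and $F\kappa(\mc H)$ through the module-category functors on $\mc A(-)$. There I would lean on the Frobenius structure of separable $A$ and on the uniqueness of pure-injective lifts of injective hulls.
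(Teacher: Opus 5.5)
Your overall architecture matches the paper's: \Cref{left to right} for one direction and \Cref{right to left} for the other. However, the converse direction rests on (b), and (b) is not analogous to (a). Statement (a) is formal for any geometric functor. The functor $\bar F\colon\mc A(\mc T)_{\mc H}\to\mc A(\mc S)_{\mc H'}$ is exact and conservative. Hence the adjunct of $\bar\eta_{\mc H'}$ is a monomorphism from $\bar{\unit}$ into the injective object $\overline{U\kappa(\mc H')}$, and the injective hull splits off. Your appeal to a ``retraction $A\to\unit$'' is wrong: the unit $k\to kC_2$, $1\mapsto 1+g$, has no equivariant retraction in characteristic~$2$. This is easily repaired by citing Balmer.

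For (b), by contrast, you need $\bar F(\overline{\kappa(\mc H)})$ to be injective in $\mc A(\mc S)_{\mc H'}$, and that is not formal. The Frobenius property of $A$ only shows that $\mc A(\mc T)_{\mc H}\to\mc A(\mc S)/\Loc(\langle\hat F(\mc H)\rangle)\simeq\bar A\text{-}\Mod_{\mc A(\mc T)_{\mc H}}$ preserves injectives (\Cref{compatibilities}, \Cref{sep ext in abelian cats}). The further Gabriel quotient onto $\mc A(\mc S)_{\mc H'}$ need not preserve injectives. The paper supplies two inputs that are missing from your plan:
\begin{itemize}
\item \Cref{intersection_of_homological_primes_is_zero}: $\langle\hat F(\mc H)\rangle$ equals the intersection of the homological primes $\mc G_1,\dots,\mc G_d$ over $\mc H$. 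It is proved by induction on degree via the splitting $F(A)\cong\unit\times C$, which is the real use of finite degree.
\item \Cref{cor}: the points $\pi_{\mc S}(\mc G_i)$ are distinct, visible, and closed after localising at $\langle F(\mc P)\rangle$, so orthogonal idempotents $\mb E_i$ split $\mc A(\mc S)/\Loc(\bigcap_i\mc G_i)\simeq\prod_i\mc A(\mc S)_{\mc G_i}$.
\end{itemize}
Only then is the functor to $\mc A(\mc S)_{\mc H'}$ an extension of scalars followed by a product projection, and hence injective-preserving. This is where the steel condition and visibility are needed at every point of the fibre, not just at $\mc Q$.

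Your localisation step is also wrong as stated. First, $\mc S/\Loc(\langle F(\mc P)\rangle)$ need not decompose over the fibre: for $R\to S$ finite \'etale with $R$ local and $S$ a semilocal domain, $\Spec(S)$ is connected. Second, compactness of an arbitrary object cannot be tested in $\mc S_{\mc Q}$. For instance, an infinite sum of copies of the residue object at another fibre point vanishes in $\mc S_{\mc Q}$ without being compact.

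What the forward direction actually needs is a specific fact: $\kappa(\mc Q)$, compact in $\mc S_{\mc Q}$, is already compact in $\mc S/\Loc(\langle F(\mc P)\rangle)$, because $\mc Q$ is a closed visible point there. This is \Cref{residue regularity at closed points}, and its proof runs as follows:
\begin{itemize}
\item It uses that $\supp(\kappa(\mc Q))=\{\mc Q\}$ in the stalk.
\item It splits a compact $a\cong\kappa(\mc Q)\oplus\Sigma\kappa(\mc Q)$ along a disjoint decomposition of $\supp(a)$ into $\{\mc Q\}$ and a closed Thomason subset, via Balmer's decomposition theorem.
\item It concludes using that two $\mc Q$-local objects isomorphic in $\mc S_{\mc Q}$ are already isomorphic.
\end{itemize}
You should supply or cite this argument in place of the decomposition of the category.
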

We refer to the body of the paper for the unexplained hypotheses in this result, and simply remark that these are relatively mild conditions. Heuristically, the result constitutes a descent-ascent statement for residual regularity that can be viewed as a tt-analogue of (classical) regularity being an étale local property in algebraic geometry. In fact, the descent part (the left-to-right direction) in this descent-ascent statement already holds under weaker conditions; see \Cref{left to right}. 

Finally, we also point out another result with a counterpart in algebraic geometry. When $R$ is a commutative Noetherian ring that is regular at a closed point $\mathfrak{m} \in \Spec(R)$ (i.e.\@ $\mathfrak{m}$ is a maximal ideal of $R$), then the residue field $R_\mathfrak{m}/\mathfrak{m}R_\mathfrak{m}$ is a perfect complex already in $\D(R)$, not just in the stalk category~$\D(R_\mathfrak{m})$. In \Cref{residue regularity at closed points}, we show that the same is true in tt-geometry for the residue objects of closed points, under the condition that the closed point satisfies an additional topological hypothesis, which is automatically satisfied when the spectrum $\Spc(\mc T^c)$ is Noetherian.

The outline of the paper is relatively straightforward.  
In \Cref{homological residue fields sec} we recall the relevant details of the theory of homological residue fields. Thereafter, \Cref{res reg sec} is dedicated to the definition of residual regularity and its first consequences, among which is the descent statement of the above descent-ascent result. Next, in \Cref{descent stat sec}, we start by recalling the theory of separable rings in tensor triangulated and tensor abelian categories, and then prove the ascent statement in a series of technical lemmas. Finally, in \Cref{mod rep sec}, we prove the classification of all finite groups whose derived category of permutation modules is residually regular. 

\subsection*{Acknowledgements}
I am very grateful to my advisor Paul Balmer for his valuable ideas, guidance and useful comments. I would also like to thank Greg Stevenson for helpful discussions regarding separable extensions of tensor abelian categories.
Moreover, I thank Timothy De Deyn, Beren Sanders and Greg Stevenson for their feedback on an earlier version of this paper.

\section*{Conventions}

We often write `tt' to abbreviate `tensor triangulated' or `tensor triangular'.

Throughout, a `big' tt-category refers to a rigidly-compactly generated tensor triangulated category $\mc T$, as in \cite{Balmer-Favi}. This means that $\mc T$ admits all set-indexed coproducts, its compact objects and its rigid objects coincide, and the essentially small subcategory $\mc T^c$ of compact-rigid objects generates $\mc T$ as a localising triangulated subcategory. The tensor product of $\mc T$ will be denoted by $\otimes$, the unit by $\unit$ and the suspension functor by $\Sigma$. The tt-category $\mc T^c$ is essentially small, so we can consider its Balmer spectrum $\Spc(\mc T^c)$. We will write $\supp_{\mc T}(-)$ for the universal support theory on $\mc T^c$, i.e.\@ 
\begin{equation*}
    \supp_{\mc T}(a) = \{ \mc P \in \Spc(\mc T^c) \mid a \notin \mc P \},
\end{equation*}
for $a \in \mc T^c$.

A tensor triangulated functor $F\colon \mc T \to \mc S$ between two `big' tt-categories is called geometric if it preserves all coproducts. By Brown--Neeman Representability \cite{Neemanrepresentability}, any geometric tt-functor admits a right adjoint $U \colon\mc S \to \mc T$, which is triangulated, lax-monoidal and preserves coproducts. Moreover, the adjunction $F \dashv U$ satisfies a projection formula: $U(X \otimes F(Y)) \cong U(X) \otimes Y$ for objects $X \in \mc S$ and $Y \in \mc T$. Since $F$ is symmetric monoidal, it preserves rigid objects and therefore restricts to a tt-functor $F\colon \mc T^c \to \mc S^c$. Thus, $F$ induces a continuous map on Balmer spectra that we denote by $\varphi_F\colon \Spc(\mc S^c) \to \Spc(\mc T^c)$. 

For a collection of objects $S \subseteq \mc T$, we denote by $\thick(S)$, respectively $\Loc(S)$, the thick triangulated subcategory generated by $S$, respectively localising triangulated subcategory generated by $S$. By contrast, the thick $\otimes$-ideal generated by $S$ is denoted by $\langle S \rangle$.

Analogously, but by abuse of notation, when $\mc A$ is a Grothendieck abelian category with colimit-preserving symmetric monoidal structure and $S\subseteq \mc A$ is a collection of objects, we denote by $\Loc(S)$, respectively $\langle S\rangle$, the localising Serre subcategory, respectively Serre $\otimes$-ideal generated by $S$.

\section{Homological residue fields}\label{homological residue fields sec}

We recall the relevant details of the theory of homological residue fields.
Experts can safely skip this section and refer back to it as needed.

\begin{recollection}
    For a `big' tt-category $\mc T$, the module category $\mc A(\mc T) \vcentcolon= \Mod\text{-}\mc T^c$ is the locally coherent Grothendieck category of additive functors from $(\mc T^c)^{\op}$ to abelian groups (see \cite[Appendix A]{BKSruminations}). The subcategory of finitely presented objects of $\mc A(\mc T)$ is denoted by $\mc A(\mc T)^{\fp}$ and it is abelian, i.e.\@ $\mc A(\mc T)$ is locally coherent. The restricted Yoneda functor $h_{\mc T}\colon \mc T \to \mc A(\mc T)$ is defined by $h(X) =\hat{X} = \Hom_{\mc T}(-, X)_{\mid \mc T^c}$ for every $X \in \mc T$. This functor is homological, coproduct-preserving and conservative. It yields a commutative diagram 
\[\begin{tikzcd}
	{\mc T^c} & {\mc A(\mc T)^{\fp}} \\
	{\mc T} & {\mc A(\mc T) \rlap{ ,}}
	\arrow[hook, from=1-1, to=1-2]
	\arrow[hook, from=1-1, to=2-1]
	\arrow[hook, from=1-2, to=2-2]
	\arrow["{h_{\mc T}}"', from=2-1, to=2-2]
\end{tikzcd}\]
where the top arrow is the classical Yoneda embedding for $\mc T^c$, which identifies $\mc T^c$ with the finitely presented projective objects in $\mc A(\mc T)$. When the category $\mc T$ is clear from context, we sometimes drop the subscript and simply write $h$ for $h_{\mc T}$. 

By Day convolution, the module category $\mc A(\mc T)$ admits a symmetric monoidal structure ${\otimes: \mc A(\mc T) \times \mc A(\mc T) \to \mc A(\mc T)}$ that is colimit-preserving in each variable. This makes $h_{\mc T}$ into a symmetric monoidal functor, i.e.\@ $\widehat{X \otimes Y} \cong \hat{X} \otimes \hat{Y}$ for all $X, Y \in \mc T$, and $\hat{X}$ is $\otimes$-flat for every $X \in \mc T$, even non-compact. Furthermore, the tensor product descends to the finitely presented objects $\mc A(\mc T)^{\fp} \times \mc A(\mc T)^{\fp} \to \mc A(\mc T)^{\fp}$. See \cite[Appendix A]{BKSOG} for proofs of these claims.
\end{recollection}

\begin{recollection}\label{quotients of abelian cats}
    Let $\mc A$ be a locally coherent Grothendieck abelian category with a colimit-preserving symmetric monoidal structure and let $\mc I \subseteq \mc A^{\fp}$ be a Serre $\otimes$-ideal of the subcategory $\mc A^{\fp}$ of finitely presented objects. We write $\Loc(\mc I)$ for the smallest localising subcategory of $\mc A$ containing $\mc I$, which is then automatically a $\otimes$-ideal in $\mc A$ and we have $\mc I = \Loc(\mc I) \cap \mc A^{\fp}$. There is a Gabriel quotient 
\[\begin{tikzcd}
	{\mc A} \\
	{\mc A/\Loc(\mc I) }
	\arrow[""{name=0, anchor=center, inner sep=0}, "{Q_{\mc I}}"', shift right=2, from=1-1, to=2-1]
	\arrow[""{name=1, anchor=center, inner sep=0}, "{R_{\mc I}}"', shift right=2, from=2-1, to=1-1]
	\arrow["\dashv"{anchor=center}, draw=none, from=0, to=1]
\end{tikzcd}\]
where $Q_{\mc I}$ is the universal exact functor with kernel $\Loc(\mc I)$ and its right adjoint $R_{\mc I}$ is fully faithful. The Grothendieck abelian category $\mc A/\Loc(\mc I)$ is locally coherent and its subcategory of finitely presented objects can be identified with the quotient $\mc A^{\fp}/\mc I$. Furthermore, since $\Loc(\mc I)$ is a $\otimes$-ideal, $\mc A/\Loc(\mc I)$ inherits a unique symmetric monoidal structure such that $Q_{\mc I}$ is a symmetric monoidal functor. This monoidal structure is also colimit-preserving in each variable. See \cite[Appendix A]{BKSruminations} for justifications of these claims.

When $\mc A = \mc A(\mc T)$ is the module category of a `big' tt-category $\mc T$ and $\mc I \subseteq \mc A(\mc T)^{\fp}$, the composite $Q_{\mc I} \circ h_{\mc T}: \mc T \to \mc A(\mc T)/\Loc(\mc I)$ is coproduct-preserving, homological and symmetric monoidal. When the Serre $\otimes$-ideal $\mc I$ is clear from context, we often write 
\begin{equation*}
    \bar{X} \vcentcolon= (Q_{\mc I} \circ h_{\mc T})(X)=Q_{\mc I}(\hat{X}) \quad \text{and} \quad \bar{f}\vcentcolon=(Q_{\mc I} \circ h_{\mc T})(f)=Q_{\mc I}(\hat{f}),
\end{equation*}
for objects $X$ and morphisms $f$ in~$\mc T$. The object $\bar{X}$ remains $\otimes$-flat in $\mc A(\mc T)/\Loc(\mc I)$ for all $X \in \mc T$. Every injective object of $\mc A(\mc T)/\Loc(\mc I)$ is of the form $\bar{E}$ for some pure-injective object $E \in \mc T$, uniquely characterised by the property that $\hat{E} \cong R_{\mc I}(\bar{E})$. Moreover, the composite $Q_{\mc I} \circ h_{\mc T}$ induces, for every $X \in \mc T$, an isomorphism 
    \begin{equation}\label{injective iso}
        \Hom_{\mc T}(X, E) \xrightarrow{\sim} \Hom_{\mc A(\mc T)/{\Loc(\mc I)}}(\bar{X}, \bar{E}). 
    \end{equation}
    This follows from \cite{Krause}; see for example \cite[Corollary 2.18]{BKSruminations}. 
\end{recollection}

An important construction is the one of \cite[Section 3]{BKSruminations}: 
\begin{construction}\label{making the residue objects}
    For $\mc I \subseteq \mc A(\mc T)^{\fp}$ a Serre $\otimes$-ideal, by the isomorphisms \eqref{injective iso}, it follows that the injective hull of the tensor unit $\bar{\unit} \in \mc A(\mc T)/\Loc(\mc I)$ comes from $\mc T$ via $Q_{\mc I} \circ h_{\mc T}$: there exists a pure-injective object $E_{\mc I}$ and a morphism 
\begin{equation*}
    \eta_{\mc I}\colon \unit \to E_{\mc I}
\end{equation*} 
in $\mc T$ such that $\bar{\eta}_{\mc I}: \bar{\unit} \to \overbar{E_{\mc I}}$ is the injective hull in $\mc A(\mc T)/\Loc(\mc I)$. 
\end{construction}

\begin{recollection}\label{homological spectrum}
    Let $\mc T$ be a `big' tt-category. The \emph{homological spectrum} $\Spc^{\mathrm{h}}(\mc T^c)$ consists of all \emph{maximal} Serre $\otimes$-ideals $\mc H$ of~$\mc A(\mc T)^{\fp}$. Such an $\mc H$ is called a \emph{homological prime} of $\mc T$. We write $\mc A(\mc T)_{\mc H}\vcentcolon= \mc A(\mc T)/\Loc(\mc H)$ and $\mc A(\mc T)_{\mc H}^{\fp}\vcentcolon= \left( \mc A(\mc T)_{\mc H}\right)^{\fp} \cong \mc A(\mc T)^{\fp}/\mc H$ for the associated Gabriel quotients. Each homological prime $\mc H \in \Spc^{\mathrm{h}}(\mc T^c)$ yields a coproduct-preserving homological monoidal functor $Q_{\mc H} \circ h_{\mc T}: \mc T \to \mc A(\mc T)_{\mc H}$, called the \emph{homological residue field of $\mc T$ at $\mc H$}. The corresponding pure-injective object of \Cref{making the residue objects} is called the \emph{residue object of $\mc T$ at $\mc H$} and is denoted by $\kappa(\mc H)$. 

    By construction, the category $\mc A(\mc T)_{\mc H}^{\fp}$ has only the trivial Serre $\otimes$-ideals, $0$ and $\mc A(\mc T)_{\mc H}^{\fp}$ itself. We highlight a useful consequence of this: if $\mc B \neq 0$ is another locally coherent Grothendieck abelian category with colimit-preserving symmetric monoidal structure and $G\colon \mc A(\mc T)_{\mc H} \to \mc B$ is an exact, colimit-preserving, symmetric monoidal functor that sends finitely presented objects to finitely presented objects, then \cite[Proposition A.6]{BKSruminations} implies that 
    \begin{equation*}
        \Ker(G) = \Loc\left(\Ker(G) \cap \mc A(\mc T)_{\mc H}^{\fp} \right).
    \end{equation*}
    The subcategory $\Ker(G) \cap \mc A(\mc T)_{\mc H}^{\fp}$ is a Serre $\otimes$-ideal of $\mc A(\mc T)_{\mc H}^{\fp}$. So $G \neq 0$ ($G$ is monoidal) implies that $\Ker(G) \cap \mc A(\mc T)_{\mc H}^{\fp} =0$ and hence also $\Ker(G) =0$; this means that $G$ is conservative, not just on the finitely presented part $\mc A(\mc T)_{\mc H}^{\fp}$ but on the whole category~$\mc A(\mc T)_{\mc H}$. 
\end{recollection}

\begin{recollection}\label{homological spectrum vs Balmer spectrum}
    For each homological prime $\mc H \in \Spc^{\mathrm{h}}(\mc T^c)$, its preimage in $\mc T^c$ under the Yoneda embedding 
    \begin{equation*}
       \pi_{\mc T}(\mc H) \vcentcolon= h_{\mc T}^{-1}(\mc H) \cap \mc T^c = \{ x \in \mc T^c \mid \hat{x} \in \mc H\}
    \end{equation*}
    is a prime thick $\otimes$-ideal of $\mc T^c$, i.e.\@ a point of the Balmer spectrum $\Spc(\mc T^c)$, by \cite[Proposition 3.3]{Balmernilpotence}. The map ${\pi_{\mc T} \colon \Spc^{\mathrm{h}}(\mc T^c) \to \Spc(\mc T^c)}$ is a surjection by \cite[Corollary 3.9]{Balmernilpotence}. We say that a homological prime $\mc H \in \Spc^{\mathrm{h}}(\mc T^c)$ \emph{lies over the point $\mc P \in \Spc(\mc T^c)$} if $\pi_{\mc T}(\mc H) = \mc P$. When $\pi_{\mc T}$ is a bijection, we say that $\mc T$ satisfies the \emph{steel condition}. 
\end{recollection}

\begin{remark}
    The steel condition is known to hold for many of the standard tt-categories across algebraic geometry, modular representation theory and stable homotopy theory; e.g.\@ the derived category $\D_{\mathrm{Qcoh}}(X)$ for any quasi-compact quasi-separated scheme $X$, the stable module category $\Stab(kG)$ (and more generally $\DPerm(G;k)$) for a finite group~$G$, the stable homotopy category of topological spectra $\SH$ and its $G$-equivariant version~$\SH(G)$ for a compact Lie group $G$. However, in \cite{nervesofsteelcounterex}, it was shown that the steel condition is not automatic for `big' tt-categories $\mc T$, that is the map $\pi_{\mc T}$ can fail to be injective. 
\end{remark}

\begin{recollection}\label{functoriality_of_the_homological_spectrum}
    Consider a geometric tt-functor $F\colon \mc T \to \mc S$ between `big' tt-categories with right adjoint~$U$. By \cite[Corollary 2.4]{Krause}, both $F$ and $U$ induce exact colimit-preserving functors $\hat{F} \colon \mc A(\mc T) \to \mc A(\mc S)$ and $\hat{U} \colon \mc A(\mc S) \to \mc A(\mc T)$ making the following diagram commutative
\[\begin{tikzcd}
	{\mc T} & {\mc A(\mc T)} \\
	{\mc S} & {\mc A(\mc S) \rlap{ .}}
	\arrow["{h_{\mc T}}", from=1-1, to=1-2]
	\arrow["F"', shift right=2, from=1-1, to=2-1]
	\arrow["{\hat{F}}"', shift right=2, from=1-2, to=2-2]
	\arrow["U"', shift right=2, from=2-1, to=1-1]
	\arrow["{h_{\mc S}}"', from=2-1, to=2-2]
	\arrow["{\hat{U}}"', shift right=2, from=2-2, to=1-2]
\end{tikzcd}\]
    The functor $\hat{F}$ is symmetric monoidal and preserves finitely presented objects. Furthermore, the adjunction $F \dashv U$ yields an adjunction $\hat{F} \dashv \hat{U}$ which also satisfies a projection formula $\hat{U}(M \otimes \hat{F}(N)) \cong \hat{U}(M) \otimes N$ for $M \in \mc A(S)$ and $N \in \mc A(\mc T)$. 

     Now, take a homological prime~$\mc G \in \Spc^{\mathrm{h}}(\mc S^c)$. Then $$ \varphi^h_{F}(\mc G) \vcentcolon= \hat{F}^{-1}(\mc G) \cap \mc A(\mc T)^{\fp} =\{ M \in \mc A(\mc T)^{\fp} \mid \hat{F}(M) \in \mc G\}$$ is a maximal Serre $\otimes$-ideal of $\mc A(\mc T)^{\fp}$, i.e.\@ $\varphi^h_{F}(\mc G)$ defines a homological prime of~$\mc T$. The resulting map $\varphi^h_{F} \colon \Spc(\mc S^c) \to \Spc(\mc T^c)$ makes the following diagram commute 
\[\begin{tikzcd}
	{\Spc^{\mathrm{h}}(\mc S^c)} & {\Spc^{\mathrm{h}}(\mc T^c)} \\
	{\Spc(\mc S^c)} & {\Spc(\mc T^c) \rlap{ .}}
	\arrow["{\varphi_F^h}", from=1-1, to=1-2]
	\arrow["{\pi_{\mc S}}"', two heads, from=1-1, to=2-1]
	\arrow["{\pi_{\mc T}}", two heads, from=1-2, to=2-2]
	\arrow["{\varphi_F}"', from=2-1, to=2-2]
\end{tikzcd}\]
Moreover, the residue object $\kappa(\mc G)$ is a direct summand of $U(\kappa(\mc H))$, where $ \mc H = \varphi^h_{F}(\mc G)$; see \cite[Lemma 5.6 and Theorem 5.10]{bigsupport}. 
\end{recollection}

\begin{recollection}\label{homological_spectrum_of_ttquotients}
A special case of \Cref{functoriality_of_the_homological_spectrum} happens when $F$ is a Verdier localisation $L: \mc T \to \mc T/\Loc(\mc J)$, with $\mc J \subseteq \mc T^c$ a thick $\otimes$-ideal \cite[Proposition 2.9 \& Corollary 3.6]{Balmernilpotence}: the functor $\hat{L}$ is the Gabriel quotient of $\mc A(\mc T)$ by $\Loc(\langle h_{\mc T}(\mc J) \rangle)$ and, for $\varphi^h_L(\mc G) = \mc H$, the homological residue fields $\mc A(\mc T)_{\mc H}$ and $\mc A(\mc T/\Loc(\mc J))_{\mc G}$ are canonically equivalent in such a way that the following diagram commutes 
\[\begin{tikzcd}
	{\mc T} & {\mc A(\mc T)} & {\mc A(\mc T)_{\mc H}} \\
	{\mc T/\Loc(\mc J)} & {\mc A(\mc T/\Loc(\mc J))} & {\mc A(\mc T/\Loc(\mc J))_{\mc G} \rlap{ .}}
	\arrow["h", from=1-1, to=1-2]
	\arrow["L"', two heads, from=1-1, to=2-1]
	\arrow["{Q_{\mc H}}", two heads, from=1-2, to=1-3]
	\arrow["{\hat{L}}", two heads, from=1-2, to=2-2]
	\arrow["{\cong }", from=1-3, to=2-3]
	\arrow["h"', from=2-1, to=2-2]
	\arrow["{Q_{\mc G}}"', two heads, from=2-2, to=2-3]
\end{tikzcd}\]
It follows from the above diagram that 
\[\kappa(\mc G )= L(\kappa(\mc H)) \in \mc T/\Loc(\mc J).\]
Moreover, the map $\varphi^h_L$ yields a bijection between $\Spc^{\mathrm{h}}\left((\mc T/\Loc(\mc J))^c\right)$ and the subset ${\{ \mc H \in \Spc^{\mathrm{h}}(\mc T^c) \mid \pi_{\mc T}(\mc H) \supseteq \mc J\}}$ of $\Spc^{\mathrm{h}}(\mc T^c)$. 
\end{recollection}

The residue objects $\kappa(\mc H)$, for $\mc H \in \Spc^{\mathrm{h}}(\mc T^c)$ (\Cref{homological spectrum}), form the basis for the definition of residual regularity in \Cref{res reg sec}, so we collect some additional results regarding these.

\begin{proposition}[\cite{Balmernilpotence, BKSruminations}]\label{tensor to zero}
    Let $\mc T$ be a `big' tt-category and let $\mc H \in \Spc^{\mathrm{h}}(\mc T^c)$ be a homological prime. We have 
    \begin{enumerate}
        \item The morphism $\kappa(\mc H) \otimes \eta_{\mc H} \colon \kappa(\mc H) \to \kappa(\mc H) \otimes \kappa(\mc H)$ is a split monomorphism.
        \item We can recover $\Loc(\mc H)$ from $\kappa(\mc H)$ as 
        $\Loc(\mc H) = \Ker(\widehat{\kappa(\mc H)} \otimes - )$. 
        \item If $\mc G \in \Spc^{\mathrm{h}}(\mc T^c)$ is distinct from $\mc H$, then ${\kappa(\mc H) \otimes \kappa(\mc G) = 0}$.
    \end{enumerate}
\end{proposition}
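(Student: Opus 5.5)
We prove the three parts in order, working throughout in $\mc A \vcentcolon= \mc A(\mc T)_{\mc H}$ with the functor $\overline{(-)} = Q_{\mc H}\circ h_{\mc T}$. Write $E = \kappa(\mc H)$ and $\eta = \eta_{\mc H}$, so that $\bar\eta\colon \bar\unit \to \bar E$ is the injective hull.

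\emph{Part (a).} Since $\bar E$ is $\otimes$-flat, $\bar E \otimes \bar\eta\colon \bar E \to \bar E\otimes \bar E$ is a monomorphism. The target $\overline{E\otimes E}$ is the image of an object of $\mc T$, so a retraction in $\mc A$ will lift to $\mc T$ via the isomorphism \eqref{injective iso}, because $E$ is pure-injective with $\bar E$ injective. Concretely, the monomorphism $\bar E\otimes\bar\eta$ into $\bar E\otimes\bar E$ splits in $\mc A$ because its source $\bar E$ is injective. The retraction $\bar E\otimes \bar E\to \bar E$ is then $\bar r$ for a unique $r\colon E\otimes E\to E$ by \eqref{injective iso}. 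Faithfulness of \eqref{injective iso} on maps into $E$ gives $r\circ(E\otimes\eta)=\id_E$. This part is routine.

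\emph{Part (b).} First, every $M\in\mc H$ is killed by $\widehat{E}\otimes-$. Maximality of $\mc H$ and the conservativity statement in \Cref{homological spectrum} suggest arguing via the functor $G = \bar E\otimes -$ on $\mc A$. I would show that $\bar E\otimes \bar M = 0$ for $M\in\mc H$, which holds trivially since $Q_{\mc H}(M)=0$. The real content is lifting this to $\widehat E\otimes M=0$ in $\mc A(\mc T)$. For that I would use $\widehat E \cong R_{\mc H}(\bar E)$ together with a projection-type formula for the quotient, $R(\bar E)\otimes M \cong R(\bar E\otimes QM)$. This needs justification; I expect it to follow from $\bar E$ being injective and $\Loc(\mc H)$ being a $\otimes$-ideal, checked on finitely presented $M$ via presentations by representables $\hat x$. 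Conversely, if $\widehat E\otimes M=0$ then applying $Q$ gives $\bar E\otimes QM=0$. Since $\bar\unit\hookrightarrow \bar E$ is a monomorphism and $QM$ is flat-tensored, or rather by tensoring the mono with $QM$ and using flatness of $\bar E$ appropriately, we get $QM=0$, i.e.\ $M\in\Loc(\mc H)$. The subtle point is that $QM$ need not be flat. Instead I would use (a): $\bar E$ is a retract of $\bar E\otimes\bar E$, and the kernel of $\bar E\otimes-$ is a localising $\otimes$-ideal whose finitely presented part is a Serre $\otimes$-ideal of $\mc A^{\fp}$ not containing $\bar\unit$. By maximality this finitely presented part is zero, and then by \cite[Proposition A.6]{BKSruminations} as in \Cref{homological spectrum} the whole kernel vanishes.

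\emph{Part (c).} For $\mc G\neq\mc H$, maximality gives some $M\in\mc G\setminus\mc H$ with $M$ finitely presented. By (b) we have $\widehat{\kappa(\mc G)}\otimes M=0$, while $\widehat{\kappa(\mc H)}\otimes M\neq0$. Symmetrically, there is $N\in\mc H\setminus\mc G$. Then $\widehat{\kappa(\mc H)}\otimes\widehat{\kappa(\mc G)}$ is annihilated by tensoring with $N$ (via $\kappa(\mc H)$) and with $M$. The step I expect to be the main obstacle is concluding vanishing from this. I would consider the Serre $\otimes$-ideal $\mc H\cap\mc G$ and note that $\langle\mc H,\mc G\rangle=\mc A(\mc T)^{\fp}$ by maximality. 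Hence $\hat\unit$ lies in the Serre $\otimes$-ideal generated by $M$ and $N$, which is contained in the kernel of $\widehat{\kappa(\mc H)}\otimes\widehat{\kappa(\mc G)}\otimes -$, since that kernel is a Serre $\otimes$-ideal by flatness. Therefore $\widehat{\kappa(\mc H)\otimes\kappa(\mc G)}=0$, and conservativity of $h$ gives $\kappa(\mc H)\otimes\kappa(\mc G)=0$.
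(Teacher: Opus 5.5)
The paper gives no proof of this proposition; it quotes it from \cite{Balmernilpotence, BKSruminations}. Your part (a) is the standard argument and is correct. Part (c) follows from (b), with one correction. The Serre $\otimes$-ideal generated by a single $M$ and a single $N$ need not contain $\hat\unit$. What you should say is that the kernel of $\widehat{\kappa(\mc H)}\otimes\widehat{\kappa(\mc G)}\otimes-$ contains all of $\mc H\cup\mc G$, hence $\langle\mc H\cup\mc G\rangle=\mc A(\mc T)^{\fp}$.

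\textbf{Main gap: the inclusion $\mc H\subseteq\Ker(\hat E\otimes-)$ in (b).} This is the real content of (b), and it is exactly what (c) uses. For $M\in\mc H$, your formula $R(\bar E)\otimes M\cong R(\bar E\otimes Q_{\mc H}M)$ reads $\hat E\otimes M\cong R(0)=0$. So it is the claim itself, not a route to it. Nor can it be checked on a presentation $\hat x\xrightarrow{\hat f}\hat y\to M\to 0$: $R_{\mc H}$ is only left exact, so it does not carry $\operatorname{coker}(\bar E\otimes\bar f)$ to $\operatorname{coker}(R(\bar E\otimes\bar f))$.

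The missing idea is to combine rigidity with \emph{images} rather than cokernels.
\begin{itemize}
\item For compact $y$ one does have $R(\bar E)\otimes\hat y\cong R(\bar E\otimes\bar y)$, naturally in $y$. Both objects represent $N\mapsto\Hom(Q_{\mc H}N\otimes\overline{y^{\vee}},\bar E)$, because $\hat y$ and $\bar y$ are rigid.
\item Every $M\in\mc A(\mc T)^{\fp}$ has the form $M\cong\im(\hat g)$ for some $g\colon y\to c$ in $\mc T^c$: take the next map in the triangle on $f$.
\item Then $M\in\mc H$ if and only if $\bar g=0$.
\end{itemize}
Hence $\hat E\otimes\hat g\cong R(\bar E\otimes\bar g)=0$, and by flatness $\hat E\otimes M=\im(\hat E\otimes\hat g)=0$.

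\textbf{Second gap: the reverse inclusion.} \cite[Proposition A.6]{BKSruminations}, as used in \Cref{homological spectrum}, applies to functors that send finitely presented objects to finitely presented objects. The functor $\bar E\otimes-$ does not, since $\bar E\otimes\bar\unit=\bar E$. So knowing that $\Ker(\bar E\otimes-)$ contains no nonzero finitely presented object does not by itself give $\Ker(\bar E\otimes-)=0$. This is where (a) is needed; you mention it but never use it. The argument runs as follows.
\begin{itemize}
\item Suppose $\bar E\otimes Y=0$ with $Y\neq 0$. By flatness of $\bar E$, replace $Y$ by a nonzero cyclic subobject $\bar x/K$.
\item Write $K=\bigcup_j K_j$ with each $K_j$ finitely presented.
\item Since $\bar E\otimes(\bar x/K)=0$, flatness gives $\bar E\otimes\bar x=\bigcup_j\bar E\otimes K_j$.
\item As $\bar x$ is finitely presented, $\bar\eta\otimes\bar x$ factors through some $\bar E\otimes K_j$. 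By naturality of $\bar\eta\otimes-$ this gives $\bar\eta\otimes(\bar x/K_j)=0$.
\item Tensor with $\bar E$. Since $\bar E\otimes\bar\eta$ is split mono by (a), this yields $\bar E\otimes(\bar x/K_j)=0$.
\item Now maximality of $\mc H$ applies to the finitely presented object $\bar x/K_j$, so $\bar x/K_j=0$. Its quotient $\bar x/K$ is then $0$ too, a contradiction.
\end{itemize}
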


To the author's knowledge, the following two lemmas are not noted anywhere, but they are no doubt known to experts as they are straightforward consequences of the theory of homological residue fields. We include the proofs here for posterity. 
\begin{lemma}\label{residue objects are local}
    Let $\mc T$ be a `big' tt-category, $\mc P \in \Spc(\mc T^c)$ and $\mc H \in \pi_{\mc T}^{-1}(\{\mc P\})$. Then the residue object $\kappa(\mc H)$ is $\mc P$-local, i.e.\@ $\kappa(\mc H) \in \mc P^{\perp} = \{ Y \in \mc T \mid \Hom_{\mc T}(a, Y) = 0 \text{ for all } a \in \mc P\}$. 
\end{lemma}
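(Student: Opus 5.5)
We must show $\Hom_{\mc T}(a,\kappa(\mc H))=0$ for every $a\in\mc P$. The plan is to transport this Hom group into the homological residue field via the isomorphism \eqref{injective iso}. Since $\kappa(\mc H)=E_{\mc H}$ is the pure-injective object of \Cref{making the residue objects} attached to the Serre $\otimes$-ideal $\mc H$, \eqref{injective iso} gives, for every $X\in\mc T$ and in particular for $X=a\in\mc T^c$,
\begin{equation*}
\Hom_{\mc T}(a,\kappa(\mc H))\cong \Hom_{\mc A(\mc T)_{\mc H}}\bigl(\bar a,\overline{\kappa(\mc H)}\bigr).
\end{equation*}

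It then suffices to show that $\bar a = Q_{\mc H}(\hat a)=0$ whenever $a\in\mc P$. By \Cref{homological spectrum vs Balmer spectrum} we have $\mc P=\pi_{\mc T}(\mc H)=\{x\in\mc T^c\mid \hat x\in\mc H\}$. Hence $a\in\mc P$ means $\hat a\in\mc H\subseteq\Loc(\mc H)=\Ker(Q_{\mc H})$, so $\bar a=0$. The Hom group on the right is therefore zero, and so is $\Hom_{\mc T}(a,\kappa(\mc H))$. This holds for all $a\in\mc P$, so $\kappa(\mc H)\in\mc P^{\perp}$.

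There is no real obstacle. The only point to check is that \eqref{injective iso} applies to $E=\kappa(\mc H)$ and arbitrary $X\in\mc T$. This is guaranteed by \Cref{homological spectrum}, where $\kappa(\mc H)$ is defined as the pure-injective object of \Cref{making the residue objects} with $\overline{\kappa(\mc H)}$ injective in $\mc A(\mc T)_{\mc H}$.
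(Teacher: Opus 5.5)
Your proposal is correct and matches the paper's proof: you use Krause's isomorphism \eqref{injective iso} to identify $\Hom_{\mc T}(a,\kappa(\mc H))$ with $\Hom_{\mc A(\mc T)_{\mc H}}\bigl(\bar a,\overline{\kappa(\mc H)}\bigr)$. You then observe that $a\in\mc P=\pi_{\mc T}(\mc H)$ forces $\hat a\in\mc H\subseteq\Ker(Q_{\mc H})$, so $\bar a=0$ and the Hom group vanishes.
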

\begin{proof}
    Since $ \overbar{\kappa(\mc H)}$ is injective in the homological residue field $\mc A(\mc T)_{\mc H}$, we know, by isomorphism \eqref{injective iso}, that
    \[\Hom_{\mc T}(X, \kappa(\mc H)) \cong \Hom_{\mc A(\mc T)_{\mc H}}\left(\bar{X}, \overbar{ \kappa(\mc H)}\right),\]
    for any $X \in \mc T$. Now, by definition $\mc P = \pi_{\mc T}(\mc H) = h_{\mc T}^{-1}(\mc H) \cap \mc T^c$. Therefore, $h_{\mc T}(\mc P) \subseteq \mc H$ and thus $\mc P$ is annihilated by~$Q_{\mc H} \circ h: \mc T \to \mc A(\mc T)_{\mc H}$. As such, we find that 
    \[\Hom_{\mc T}(\mc P, \kappa(\mc H)) \cong \Hom_{\mc A(\mc T)_{\mc H}}(0 , \overbar{ \kappa(\mc H)}) =0,\]
    as desired.
\end{proof}

\begin{recollection}\label{local cat}
    Recall that a `big' tt-category $\mc T$ is called \emph{local} if the spectrum $\Spc(\mc T^c)$ has a unique closed point, in which case this closed point must be the zero ideal $\mc M = (0)$. 
    To every point $\mc P \in \Spc(\mc T^c)$, one can associate a local tt-category $\mc T_{\mc P} \vcentcolon= \mc T/\Loc(\mc P)$, called the \emph{stalk category of $\mc T$ at $\mc P$}.  
\end{recollection}

\begin{lemma}
    Let $\mc T$ be a `big' tt-category, $\mc P \in \Spc(\mc T^c)$ and $\mc H \in \pi^{-1}_{\mc T}(\{\mc P\})$. For every point $\mc Q\in \Spc(\mc T^c) \setminus \overline{\{\mc P\}}$ we have $\kappa(\mc H) =0$ in $\mc T_{\mc Q}$. 
\end{lemma}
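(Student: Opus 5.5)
We have $\mc Q \notin \overline{\{\mc P\}}$. In the Balmer spectrum, closure is given by $\overline{\{\mc P\}} = \{\mc Q' \mid \mc Q' \subseteq \mc P\}$. So $\mc Q \not\subseteq \mc P$, and we may pick an object $a \in \mc Q \setminus \mc P$. The plan is to show $\kappa(\mc H) \otimes a = 0$ in $\mc T$. Since $a \in \mc Q \subseteq \Loc(\mc Q)$ and $\Loc(\mc Q)$ is a $\otimes$-ideal, this gives $\kappa(\mc H) \otimes a \in \Loc(\mc Q)$ automatically. The actual goal is to show $\kappa(\mc H)$ itself lies in $\Loc(\mc Q)$, i.e.\ vanishes under $\mc T \to \mc T_{\mc Q}$.

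For this I would use the stronger fact that the image of $\kappa(\mc H)$ in $\mc T_{\mc Q}$ is a residue object there, or zero. By \Cref{homological_spectrum_of_ttquotients} applied to $L\colon \mc T \to \mc T_{\mc Q} = \mc T/\Loc(\mc Q)$, the homological primes of $\mc T_{\mc Q}$ correspond to those $\mc H'$ with $\pi_{\mc T}(\mc H') \supseteq \mc Q$. Our $\mc H$ has $\pi_{\mc T}(\mc H) = \mc P \not\supseteq \mc Q$, so $\mc H$ does not come from $\mc T_{\mc Q}$. This alone does not immediately give vanishing, so I would argue directly as follows.

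\textbf{Step 1.} Take $a \in \mc Q \setminus \mc P$. Since $a \notin \mc P = h^{-1}(\mc H)\cap\mc T^c$, we have $\hat a \notin \mc H$, so $\bar a \neq 0$ in $\mc A(\mc T)_{\mc H}$. Because $\mc A(\mc T)^{\fp}_{\mc H}$ has no nontrivial Serre $\otimes$-ideals, the ideal generated by $\bar a$ is everything. Since $a$ is rigid, $\bar a$ is rigid too, and I expect this to force $\bar{\unit} \in \langle \bar a\rangle$.

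\textbf{Step 2.} From Step 1, show that $\bar{\unit}$ is a direct summand of $\bar a \otimes \overline{a^\vee}$, or at least that $\bar{\eta}$ factors in a way that makes $\overbar{\kappa(\mc H)}$ a retract of $\overbar{\kappa(\mc H)\otimes a\otimes a^\vee}$. Using the isomorphism \eqref{injective iso}, such a retraction in $\mc A(\mc T)_{\mc H}$ lifts to maps in $\mc T$: the maps $\kappa(\mc H) \to \kappa(\mc H)\otimes a\otimes a^\vee \to \kappa(\mc H)$, whose composite is the identity. This exhibits $\kappa(\mc H)$ as a retract of $\kappa(\mc H)\otimes a \otimes a^\vee \in \Loc(\mc Q)$. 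Lifting the second map is the delicate part, since \eqref{injective iso} lifts maps only into pure-injectives. Note, though, that $\kappa(\mc H)\otimes a\otimes a^\vee$ is pure-injective, being a tensor product of a pure-injective with a compact object.

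\textbf{Alternative route, if Step 2 is awkward.} Use \Cref{tensor to zero}(b). We have $\widehat{\kappa(\mc H)}\otimes \hat b \ne 0$ for $b=a$, so $a$ is not in $\Ker$. What we need instead is $L(\kappa(\mc H))=0$, i.e.\ $\Hom(c, L\kappa(\mc H))=0$ for all compact $c$ in $\mc T_{\mc Q}$. Since $\kappa(\mc H)$ is a summand of $\kappa(\mc H)\otimes\kappa(\mc H)$ by \Cref{tensor to zero}(a), it suffices to show $L(\kappa(\mc H))$ is annihilated by the homological residue fields of $\mc T_{\mc Q}$. For $\mc G$ over $\mc T_{\mc Q}$, corresponding to $\mc H'\neq \mc H$, we get $\kappa(\mc G)\otimes L\kappa(\mc H) = L(\kappa(\mc H')\otimes\kappa(\mc H))=0$ by \Cref{tensor to zero}(c). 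Then the detection property of homological residue fields (the nilpotence/support theory of \cite{bigsupport}) would give $L\kappa(\mc H)=0$. I expect the main obstacle to be justifying either the lifting in Step 2 or this detection step. I would try Step 2 first, since it only uses results recalled here.
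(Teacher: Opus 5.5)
\paragraph{Main route.} There is a genuine gap at the crux: Step~2 asserts, rather than proves, that $\overline{\kappa(\mc H)}$ is a retract of $\overline{\kappa(\mc H)\otimes a\otimes a^\vee}$. Step~1 does not supply it. The fact $\bar\unit\in\langle\bar a\rangle$ is true but says nothing about splittings. The stronger claim that $\bar\unit$ is a summand of $\bar a\otimes\overline{a^\vee}$ is unjustified and not needed.

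The missing idea is that the coevaluation $\bar c\colon\bar\unit\to\bar a\otimes\overline{a^\vee}$ is a \emph{monomorphism} in $\mc A(\mc T)_{\mc H}$:
\begin{itemize}
\item Since $\bar a$ is $\otimes$-flat, $\Ker(-\otimes\bar a)\cap\mc A(\mc T)^{\fp}_{\mc H}$ is a Serre $\otimes$-ideal. It does not contain $\bar\unit$ because $\bar a\neq 0$, so it is zero by \Cref{homological spectrum}.
\item The kernel $K$ of $\bar c$ is finitely presented. Flatness gives $K\otimes\bar a=\ker(\bar c\otimes\bar a)$, and this is $0$ because $c\otimes a$ is a split monomorphism by the zig-zag identity for $(a,a^\vee)$. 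Hence $K$ lies in that Serre $\otimes$-ideal, so $K=0$.
\item Tensoring with the flat object $\overline{\kappa(\mc H)}$, the map $\overline{\kappa(\mc H)}\otimes\bar c$ is a monomorphism out of an injective, hence split.
\end{itemize}

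You also misplace the delicate point. The map that needs lifting is the retraction, whose target is $\kappa(\mc H)$; that is exactly the situation of \eqref{injective iso}. The other map is just $\kappa(\mc H)\otimes c$, already defined in $\mc T$. The composite in $\mc T$ is $\id_{\kappa(\mc H)}$ because \eqref{injective iso} is bijective for $X=\kappa(\mc H)$. Pure-injectivity of $\kappa(\mc H)\otimes a\otimes a^\vee$ is irrelevant, and would not by itself let you lift maps out of the Gabriel quotient.

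With this filled in, $\kappa(\mc H)$ is a retract of $\kappa(\mc H)\otimes a\otimes a^\vee\in\Loc(\mc Q)$, and the statement follows.

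\paragraph{Alternative route.} This is essentially the paper's argument, but its last step cannot be a general ``detection property'' of homological residue fields. No such detection principle for non-compact objects is available in general. The paper instead applies the nilpotence theorem \cite[Corollary 4.5]{Balmernilpotence} in $\mc T_{\mc Q}$ to the morphism $\eta_{\mc H}\colon\unit\to\kappa(\mc H)$. Its source is compact, and it is killed by every homological residue field of $\mc T_{\mc Q}$ by the orthogonality you computed. This gives $\eta_{\mc H}^{\otimes n}=0$ in $\mc T_{\mc Q}$. Since $\kappa(\mc H)\otimes\eta_{\mc H}^{\otimes n}$ is a composite of split monomorphisms by \Cref{tensor to zero}(a), it follows that $\kappa(\mc H)=0$ in $\mc T_{\mc Q}$.

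\paragraph{Comparison.} Your completed Step~2 avoids the nilpotence theorem and \Cref{tensor to zero} entirely, using only maximality of $\mc H$ and \eqref{injective iso}. It also gives the sharper conclusion $\kappa(\mc H)\in\Loc(\langle a\rangle)$ for every $a\in\mc T^c\setminus\mc P$. The paper's proof instead stays within the homological-prime bookkeeping of $\mc T_{\mc Q}$ via \Cref{homological_spectrum_of_ttquotients}.
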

\begin{proof}
     Fix a point $\mc Q\in \Spc(\mc T^c) \setminus \overline{\{\mc P\}}$. 
     By \Cref{homological_spectrum_of_ttquotients}, all homological primes of $\mc T_{\mc Q}$ `come from' $\Spc^{\mathrm{h}}(\mc T^c)$:
    \[\Spc^{\mathrm{h}}((\mc T_{\mc Q})^c) \cong \{ \mc G \in \Spc^{\mathrm{h}}(\mc T^c) \mid \pi_{\mc T}(\mc G) \supseteq \mc Q\} .\]
    Since $\mc Q \nsubseteq P$, this means that $\mc H \neq \mc G$ for any $ \mc G \in \Spc^{\mathrm{h}}(\mc T^c) $ with $\pi_{\mc T}(\mc G) \supseteq \mc Q$. Hence, by \Cref{tensor to zero}, we have
    \begin{equation*}
      \widehat{\kappa(\mc G)} \otimes \widehat{\kappa(\mc H)} =0 \in \mc A(\mc T) . 
    \end{equation*}
    It follows that $Q_{\mc G} \left(\widehat{\kappa(\mc H)}\right) =0$ for every $\mc G \in \{ \mc G' \in \Spc^{\mathrm{h}}(\mc T^c) \mid \pi_{\mc T}(\mc G') \supseteq \mc Q\} $.
    Applying the nilpotence theorem \cite[Corollary 4.5]{Balmernilpotence} to the image of $\eta_{\mc H}: \unit \to \kappa(\mc H)$ in the tt-category $\mc T_{\mc Q}$, we find that $\eta_{\mc H}^{\otimes n} = 0 \in \mc T_{\mc Q}$ for some $n \geq 1$. However, by \Cref{tensor to zero}, $\kappa(\mc H) \otimes \eta_{\mc H}$ is a split monomorphism, hence $\kappa(\mc H) \otimes \eta_{\mc H}^{\otimes n}$ is a composite of split monomorphisms. Thus $\eta_{\mc H}^{\otimes n} = 0 \in \mc T_{\mc P}$ implies that $\kappa(\mc H) = 0$ in $\mc T_{\mc Q}$. 
\end{proof}

\section{Residual regularity}\label{res reg sec}

The following definition was proposed by Paul Balmer in private communication. 
\begin{definition}
    A `big' tt-category $\mc T$ is \emph{residually regular at the point ${\mc P \in \Spc(\mc T^c)}$} if there exists a homological prime $\mc H \in \pi_{\mc T}^{-1}(\{\mc P\})$ such that the residue object $\kappa(\mc H)$ is a compact object in the stalk category~$\mc T_{\mc P}= \mc T/\Loc(\mc P)$. 
    
    A `big' tt-category $\mc T$ is \emph{residually regular} if it is residually regular at all points of~$\Spc(\mc T^c)$. 
\end{definition}

Any reasonable definition of regularity in tt-geometry should recapture the classical concept of regularity in algebraic geometry, when applied to the derived category of a Noetherian scheme. The following example shows that this is indeed the case for residual regularity, while simultaneously further justifying the terminology for the residue objects.
\begin{example}\label{noetherian rings}
    Let $R$ be a commutative ring with derived category $\D(R)$. Then ${\mc T = \D(R)}$ is a `big' tt-category with $\mc T^c \cong \D^{\perf}(R) = \K^b(\mathrm{proj}(R))$. As a consequence of Thomason's work \cite{Thomasonclassific}, its Balmer spectrum is homeomorphic to the classical Zariski spectrum $\Spec(R)$; see \cite[Corollary 5.6]{BalmerOG}. Moreover, if $\mc P = \mc P(\mf p) \in \Spc(\mc T^c)$ is the point corresponding to a prime ideal $\mf p \in \Spec(R)$, then the stalk category $\mc T_{\mc P}$ is naturally equivalent to the derived category~$\D(R_{\mf p})$. By \cite[Corollary 5.11]{Balmernilpotence}, $\mc T= \D(R)$ satisfies the steel condition. Take $\mc H$ the homological prime corresponding to the Zariski prime~$\mf p \in \Spec(R) \cong \Spc(\mc T^c)$. In \cite[Corollary 3.3]{BalmerCameron}, Balmer and Cameron show that the residue object $\kappa(\mc H)$ is isomorphic to the (classical) residue field $\kappa(\mf p) = R_{\mf p}/\mf p R_{\mf p} \in \D(R)$.

    Altogether, if $R$ is a Noetherian ring, it follows immediately that $\D(R)$ is residually regular at $\mc P(\mf p)$ if and only if $R$ is regular at $\mf p$. More generally, a Noetherian scheme $X$ is regular at a point $x \in X$ if and only if the `big' tt-category $\mc T= \D_{\mathrm{Qcoh}}(X)$ is residually regular at the corresponding point $\mc P(x) \in \Spc(\mc T^c) \cong X$.
\end{example}

\begin{remark}\label{noetherian caveat}
    There is a caveat in order. In commutative algebra (and more generally algebraic geometry), regularity is a condition on Noetherian rings --- this is in part what makes it a well-behaved notion. In tt-geometry, there is not yet any consensus on what a `Noetherian tt-category' is. 
    Just as with regularity, any reasonable definition of a `Noetherian tt-category' should recapture the derived categories of Noetherian rings. 
    At minimum, whatever a `Noetherian tt-category' is, it should have a (topologically) Noetherian Balmer spectrum. This low bar already trips up the stable homotopy category of finite CW-complexes and its $p$-local counterparts, since their Balmer spectra are not Noetherian as observed in \cite[Example 7.11]{Balmer-Favi}; see also \Cref{stable homotopy cat} below.
    
    The expectation is that, for residual regularity to be a well-behaved notion in tt-geometry, `Noetherian' conditions are needed. It may turn out that (some of) the conditions in the statements of this article will be immediate under the `correct' definition of Noetherianity. 
\end{remark}

\begin{proposition}\label{residual regularity implies nerves of steel}
    If a `big' tt-category $\mc T$ is residually regular at $\mc P \in \Spc(\mc T^c)$, then $\pi_{\mc T}^{-1}(\{\mc P\})$ is a singleton. Consequently, if $\mc T$ is residually regular then $\mc T$ satisfies the steel condition.
\end{proposition}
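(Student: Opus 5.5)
The idea is to pass to the stalk category, where compactness of $\kappa(\mc H)$ lets us compare it directly with the Balmer point. Assume $\mc T$ is residually regular at $\mc P$, witnessed by $\mc H \in \pi_{\mc T}^{-1}(\{\mc P\})$ with $\kappa(\mc H)$ compact in $\mc T_{\mc P}$. Let $\mc G \in \pi_{\mc T}^{-1}(\{\mc P\})$ be arbitrary. We aim to show $\mc G = \mc H$ by contradiction, so suppose $\mc G \neq \mc H$.

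\emph{Step 1: move to the stalk.} Let $L\colon \mc T \to \mc T_{\mc P}$ be the localisation. By \Cref{homological_spectrum_of_ttquotients}, since $\pi_{\mc T}(\mc H) = \pi_{\mc T}(\mc G) = \mc P \supseteq \mc P$, there are unique homological primes $\mc H', \mc G'$ of $\mc T_{\mc P}$ mapping to $\mc H, \mc G$. These satisfy:
\begin{itemize}
\item $\mc H' \neq \mc G'$;
\item $\kappa(\mc H') = L(\kappa(\mc H))$ and $\kappa(\mc G') = L(\kappa(\mc G))$;
\item both lie over the closed point $(0)$ of $\Spc(\mc T_{\mc P}^c)$, by compatibility of $\pi$ with $\varphi_L$ in \Cref{functoriality_of_the_homological_spectrum}.
\end{itemize}
So we may replace $\mc T$ by the local category $\mc T_{\mc P}$. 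There $\kappa(\mc H')$ is compact, and $\pi(\mc G') = (0)$, i.e.\ for compact $x$ we have $\hat x \in \mc G'$ iff $x = 0$.

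\emph{Step 2: $\kappa(\mc H') \neq 0$.} Since $\mc H'$ is a proper Serre $\otimes$-ideal, the unit $\bar{\unit}$ is nonzero in $\mc A(\mc T_{\mc P})_{\mc H'}$. Its injective hull $\overbar{\kappa(\mc H')}$ is therefore nonzero, hence $\kappa(\mc H') \neq 0$.

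\emph{Step 3: derive the contradiction.} By \Cref{tensor to zero}(c), $\kappa(\mc H') \otimes \kappa(\mc G') = 0$, so $\widehat{\kappa(\mc G')} \otimes \widehat{\kappa(\mc H')} = 0$. By \Cref{tensor to zero}(b) this means $\widehat{\kappa(\mc H')} \in \Loc(\mc G')$. Because $\kappa(\mc H')$ is compact, $\widehat{\kappa(\mc H')}$ is finitely presented. Hence
\[
\widehat{\kappa(\mc H')} \in \Loc(\mc G') \cap \mc A(\mc T_{\mc P})^{\fp} = \mc G',
\]
using \Cref{quotients of abelian cats}. Thus $\kappa(\mc H') \in \pi(\mc G') = (0)$, i.e.\ $\kappa(\mc H') = 0$, contradicting Step 2. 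Hence $\mc G = \mc H$, so the fibre $\pi_{\mc T}^{-1}(\{\mc P\})$ is the singleton $\{\mc H\}$ (it is nonempty by surjectivity of $\pi_{\mc T}$).

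\emph{Consequence.} If $\mc T$ is residually regular, every fibre of $\pi_{\mc T}$ is a singleton, so $\pi_{\mc T}$ is injective. Being surjective by \Cref{homological spectrum vs Balmer spectrum}, it is a bijection, which is the steel condition.

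The only delicate point is the bookkeeping in Step 1: checking that $\mc H'$ and $\mc G'$ really lie over $(0)$, and that the residue objects are compatible with $L$. Both follow from the cited recollections, so this is routine rather than a genuine obstacle.
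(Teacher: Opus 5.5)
Your proof is correct and follows the paper's argument. Both reduce to the local category $\mc T_{\mc P}$ via \Cref{homological_spectrum_of_ttquotients}. Both then use \Cref{tensor to zero} to get $\widehat{\kappa(\mc H)} \in \Loc(\mc G)\cap h(\mc T^c) = \mc G \cap h(\mc T^c) = (0)$, which contradicts $\kappa(\mc H)\neq 0$. Your Step 2, which explains why $\kappa(\mc H')\neq 0$, fills in a point the paper leaves implicit.
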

\begin{proof}
    It suffices to prove this for $\mc T$ a local tt-category and $\mc P =(0)$ the unique closed point. After all, by \Cref{functoriality_of_the_homological_spectrum,homological_spectrum_of_ttquotients} the subset $\pi_{\mc T}^{-1}(\{ \mc P \})$ is in bijection with $\pi_{\mc T_{\mc P}}^{-1}(\{(0) \})$. 
    
    So, suppose that $\mc T$ is a local tt-category and $\mc P =(0)$. Take a homological prime $\mc H \in \pi_{\mc T}^{-1}(\{(0)\})$ such that the residue object $\kappa(\mc H)$ is a compact object in $\mc T_{\mc P} = \mc T$. Suppose, for the sake of contradiction, that there exists a homological prime $\mc G \in \pi_{\mc T}^{-1}(\{(0)\})$ distinct from $\mc H$. 
    Since $\pi_{\mc T}(\mc G) =(0)$ by assumption, it follows from \Cref{tensor to zero} that 
    \[ \widehat{\kappa(\mc H)} \in \Ker\left(\widehat{\kappa(\mc G)} \otimes -\right) \cap h_{\mc T}(\mc T^c) =\mc G \cap h_{\mc T}(\mc T^c) = (0), \]
    contradicting $\widehat{\kappa(\mc H)} \neq 0 \in \mc A(\mc T)$. 
\end{proof}

\begin{notation}\label{kappanot}
   Let $\mc P \in \Spc(\mc T^c)$. When $\pi_{\mc T}^{-1}(\{ \mc P\})$ is a singleton (for example, when $\mc T$ is residually regular at $\mc P$), we will sometimes write $\kappa(\mc P)$ for the residue object $\kappa(\mc H)$ with $\mc H$ the \emph{unique} homological prime lying over $\mc P$, and we will accordingly call $\kappa(\mc P)$ the residue object of $\mc T$ at $\mc P$. 
\end{notation}

\begin{remark}\label{inthelit}
    As explained in the introduction, the definition of residual regularity takes inspiration from the characterisation of regularity in commutative algebra involving the residue fields. One could also try to mimick another characterisation of classical regularity: a Noetherian ring $R$ is regular if and only if the \emph{singularity category} $\D^b(\fgmod(R))/ \D^{\perf}(R)$ vanishes. Since $\D^{\perf}(R)$ is exactly the compact part of $\D(R)$, the work lies in finding the appropriate analogue of the bounded derived category $\D^b(\fgmod(R))$. 
    This is like the search for the holy grail, as evidenced by the numerous articles defining such an analogue such as e.g.\@ \cite{singcats, Krausecompletion, kostas, Neeman2026, Rouquier} for triangulated categories and \cite{BarwickLawson, GregGreenlees} for differential graded algebras and structured ring spectra.  Whilst all these approaches have their merits, they all also have drawbacks for use in tt-geometry, either in the sense that they only apply to a small class of tt-categories, or that certain basic compatibilities with the tensor product are not satisfied (whereby, for example, the resulting notion is not local on the Balmer spectrum). 

    Yet another approach to regularity has to do with \emph{generation}. Recall that a Noetherian ring $R$ is regular of finite Krull dimension if and only if $\D^{\perf}(R)$ has a strong generator. Because of this, in the world of triangulated categories, strong generation of the subcategory of compact objects has traditionally been viewed as some kind of homological regularity. In the tt-setting this can suitably be adapted (and thus made into a local property) by requiring not strong generation globally, but strong generation of each stalk category~$(\mc T_{\mc P})^c$ for $\mc P \in \Spc(\mc T^c)$. This formulation relates to residual regularity in the following way: If the residue object $\kappa(\mc P)$ is `cohomologically locally finitely presented' in $\mc T_{\mc P}$ in the sense of \cite{Rouquier}, then strong generation of $(\mc T_{\mc P})^c$ implies immediately that $\kappa(\mc P)$ must be compact in $\mc T_{\mc P}$, by \cite[Corollary 4.29]{Rouquier}. So far, however, we do not know of any sufficiently general conditions to ensure that the residue objects are cohomologically locally finitely presented. In a similar fashion, in \cite{BIKP}, regularity is defined for certain tt-categories as the strong generation of particular `local' subcategories for each point of the Balmer spectrum. 

    Investigating the connections between all these different notions of regularity is an interesting topic for further research.
\end{remark}

\begin{example}\label{tt fields are regular}
    In \cite{BKSruminations}, Balmer--Krause--Stevenson define the concept of a \emph{tensor triangular field} (tt-field): A nontrivial `big' tt-category $\mc F$ is called a tt-field if every object $X \in \mc F$ is a coproduct of compact objects, and if every nonzero $X$ is $\otimes$-faithful, meaning that the functor $X \otimes -: \mc F \to \mc F$ is faithful. Examples include the derived category of a field $k$ in commutative algebra, modules over Morava $K$-theory in topology, and the stable module category of the cyclic group of order $p$ in modular representation theory. 
    
    In op.\@ cit.\@, it is shown that both the Balmer spectrum and the homological spectrum of a tt-field $\mc F$ consist of only the trivial proper ideal: $\Spc(\mc F^c) =\{(0)\}$ and $\Spc^{\mathrm{h}}(\mc F^c) = \{(0)\}$. Moreover, it is shown that $\hat{X}$ is injective in $\mc A(\mc F)$ for all $X \in \mc F$. Thus, any tt-field is residually regular and the residue object at the unique point is simply the unit $\unit$. 
\end{example}

\begin{corollary}\label{going to tt-field}
    Let $F\colon \mc T \to \mc F$ be a geometric tt-functor to a tt-field $\mc F$, with right adjoint~$U$. If $U(\unit)$ is compact in $\mc T$, then $\mc T$ is residually regular at $\mc P\vcentcolon = \varphi_F\big((0)\big)$. 
\end{corollary}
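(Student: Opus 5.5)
The plan is to show that $U(\unit)$ itself, after localising at $\mc P$, is a residue object up to summands. The tt-field $\mc F$ has $\Spc^{\mathrm{h}}(\mc F^c)=\{(0)\}$ and residue object $\kappa((0))=\unit$ by \Cref{tt fields are regular}. Put $\mc H\vcentcolon=\varphi^h_F((0))\in\Spc^{\mathrm h}(\mc T^c)$. By the commutative square of \Cref{functoriality_of_the_homological_spectrum}, $\pi_{\mc T}(\mc H)=\varphi_F(\pi_{\mc F}((0)))=\varphi_F((0))=\mc P$, so $\mc H$ lies over $\mc P$. It therefore suffices to show that $\kappa(\mc H)$ is compact in $\mc T_{\mc P}$.

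First I would show that $\kappa(\mc H)$ is compact in $\mc T$. I would use the last assertion of \Cref{functoriality_of_the_homological_spectrum}, applied to $F$ and the homological prime $(0)$ of $\mc F$. One has to read it with the roles arranged correctly: there $\kappa(\mc G)$ is a summand of $U(\kappa(\mc H))$. In our setting the cited result of \cite{bigsupport} should say that $\kappa(\mc H)$ is a direct summand of $U(\kappa(\mc G))$ with $\mc G=(0)$, and hence a summand of $U(\unit)$. I would check this directly as follows. Since $\hat F\dashv\hat U$ and $\hat U$ is exact, $\hat U$ sends the injective $\widehat{\unit}\in\mc A(\mc F)$ to an injective object of $\mc A(\mc T)$. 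The unit $\unit\to U(\unit)$ induces a map $\bar\unit\to\overline{U(\unit)}$ in $\mc A(\mc T)_{\mc H}$. I would argue that this map is a monomorphism into an injective object of $\mc A(\mc T)_{\mc H}$, using that $\hat F$ factors through the quotient $\mc A(\mc T)_{\mc H}$ because its kernel on finitely presented objects is $\mc H$, and using the conservativity in \Cref{homological spectrum}. Given this, the injective hull $\overline{\kappa(\mc H)}$ of $\bar\unit$ is a summand of $\overline{U(\unit)}$. Via the isomorphism \eqref{injective iso}, the idempotent splitting it lifts to a split inclusion $\kappa(\mc H)\to E$, where $E$ is the pure-injective object with $\bar E=\overline{U(\unit)}$.

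The main obstacle is to identify $E$ with (a summand of) $U(\unit)$ itself. For this I would use that $\widehat{U(\unit)}=\hat U(\widehat{\unit})$ is injective in $\mc A(\mc T)$, so $U(\unit)$ is pure-injective. I would also need $\widehat{U(\unit)}$ to be $\Loc(\mc H)$-closed, i.e.\ $R_{\mc H}Q_{\mc H}\widehat{U(\unit)}\cong\widehat{U(\unit)}$. This follows from the adjunction: for $M\in\Loc(\mc H)$ we have $\hat F(M)=0$, so $\Hom(M,\hat U\widehat\unit)=\Hom(\hat F M,\widehat\unit)=0$, and the $\mathrm{Ext}^1$ version holds because $\hat U\widehat\unit$ is injective. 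With that, $E\cong U(\unit)$, and $\kappa(\mc H)$ is a direct summand of the compact object $U(\unit)$, hence compact in $\mc T$.

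Finally, the localisation $L\colon\mc T\to\mc T_{\mc P}$ is geometric with a coproduct-preserving right adjoint, so it preserves compact objects. Thus $L(\kappa(\mc H))$ is compact in $\mc T_{\mc P}$. By \Cref{homological_spectrum_of_ttquotients}, $L(\kappa(\mc H))$ is exactly the residue object of $\mc T_{\mc P}$ at the homological prime corresponding to $\mc H$, and this object is what the definition of residual regularity asks to be compact. This proves that $\mc T$ is residually regular at $\mc P$.
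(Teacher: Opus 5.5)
Your proposal is correct and is essentially the paper's proof: with $\mc H=\varphi^h_F((0))$ lying over $\mc P$, the key point is that $\kappa(\mc H)$ is a direct summand of $U(\kappa((0)))=U(\unit)$. The paper simply cites this from \cite{bigsupport} via \Cref{functoriality_of_the_homological_spectrum}, read with $\kappa(\mc H)$ a summand of $U(\kappa(\mc G))$, as you correctly do and as the paper itself does in the proof of \Cref{left to right}. Your direct verification, using injectivity and $\Loc(\mc H)$-closedness of $\hat U(\widehat{\unit})$, conservativity on $\mc A(\mc T)_{\mc H}$ and the isomorphism \eqref{injective iso}, is a valid self-contained proof of that special case. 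One justification should be corrected: $\hat U$ preserves injectives because its left adjoint $\hat F$ is exact, not because $\hat U$ is exact.
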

\begin{proof}
    Set $\mc H \vcentcolon= \varphi^h_F((0)) \in \Spc^{\mathrm{h}}(\mc T^c)$. The residue object $\kappa(\mc H)$ is a direct summand of $U(\unit)$ in $\mc T$, see \Cref{functoriality_of_the_homological_spectrum}. Therefore, if $U(\unit)$ is compact, so is $\kappa(\mc H)$. 
\end{proof}

We recall the following definition of \cite{BHS}:
\begin{definition}
    Let $\mc K$ be an essentially small tt-category. A point $\mc P \in \Spc(\mc K)$ is called weakly visible if the singleton $\{ \mc P\}$ is the intersection of a Thomason subset and the complement of a Thomason subset. The spectrum $\Spc(\mc K)$ is called weakly Noetherian if every point is weakly visible. 
\end{definition}

\begin{remark}\label{visibiliy noeth}
    The above terminology was inspired by the following fact: in \cite{Balmer-Favi} a point $\mc P \in \Spc(\mc K)$ is called \emph{visible} if the closure $\overline{\{ \mc P\}}$ is a Thomason subset. In this case, $\mc P$ is certainly also weakly visible, as we have 
    \begin{equation*}
        \{ \mc P\} = \overline{\{ \mc P\}} \cap \gen(\mc P),
    \end{equation*}
    where $\gen(\mc P)$ is the subset consisting of all generalisations of $\mc P$ (its complement is the support of the ideal $\mc P$, which is always a Thomason subset). Furthermore, in \cite{Balmer-Favi} it is noted that $\Spc(\mc K)$ is Noetherian if and only if every point is visible. 
\end{remark}

\begin{lemma}\label{support is point}
    Let $\mc T$ be a local `big' tt-category $\mc T$. If $\mc T$ is residually regular at its unique closed point $\mc M$, then $\supp_{\mc T}(\kappa(\mc M)) = \{ \mc M\}$. 
\end{lemma}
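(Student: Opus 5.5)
Since $\mc T$ is local, its unique closed point is $\mc M=(0)$, so $\mc T_{\mc M}=\mc T$. Residual regularity at $\mc M$ therefore says that $\kappa(\mc M)$ is compact in $\mc T$ itself, which makes $\supp_{\mc T}(\kappa(\mc M))$ meaningful as the universal support of an object of $\mc T^c$. By \Cref{residual regularity implies nerves of steel}, $\pi_{\mc T}^{-1}(\{\mc M\})=\{\mc H\}$ is a singleton, and $\kappa(\mc M)=\kappa(\mc H)$ as in \Cref{kappanot}. The claim has two parts. First, $\mc M\in\supp_{\mc T}(\kappa(\mc M))$, i.e.\@ $\kappa(\mc M)\notin(0)$, i.e.\@ $\kappa(\mc M)\neq 0$. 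Second, no other prime $\mc Q\neq\mc M$ lies in the support, i.e.\@ $\kappa(\mc M)\in\mc Q$ for every $\mc Q\neq (0)$.

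For the first part, I would use that $\bar\eta_{\mc H}\colon\bar\unit\to\overbar{\kappa(\mc H)}$ is an injective hull of $\bar\unit$ in $\mc A(\mc T)_{\mc H}$. Here $\bar\unit\neq0$, because the quotient is nonzero ($\mc H$ is a proper ideal). Hence $\overbar{\kappa(\mc H)}\neq 0$, and so $\kappa(\mc H)\neq0$.

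For the second part, fix $\mc Q\neq\mc M$. Since $\mc M=(0)$ is contained in every prime, $\mc M\in\overline{\{\mc Q\}}$. However, $\mc Q\notin\overline{\{\mc M\}}$, because $\mc M$ is closed and $\mc Q\neq\mc M$. The preceding (unnamed) lemma, applied with $\mc P=\mc M$ and this $\mc Q\in\Spc(\mc T^c)\setminus\overline{\{\mc M\}}$, gives $\kappa(\mc H)=0$ in $\mc T_{\mc Q}=\mc T/\Loc(\mc Q)$. Equivalently, $\kappa(\mc H)\in\Loc(\mc Q)$. Now $\kappa(\mc H)$ is compact, so I would invoke the standard Neeman fact that $\Loc(\mc Q)\cap\mc T^c=\mc Q$ for a thick subcategory $\mc Q\subseteq\mc T^c$. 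This gives $\kappa(\mc H)\in\mc Q$, i.e.\@ $\mc Q\notin\supp_{\mc T}(\kappa(\mc M))$.

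Combining the two parts gives $\supp_{\mc T}(\kappa(\mc M))=\{\mc M\}$. The only step needing care is the passage from vanishing in the localisation $\mc T_{\mc Q}$ to membership in $\mc Q$. This uses compactness of $\kappa(\mc M)$, available because we are in the local category, together with Neeman's theorem on compact objects in $\Loc(\mc Q)$. Everything else is direct from the earlier results.
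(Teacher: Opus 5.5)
Your proof is correct, but it reaches the key step by a different route than the paper.

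\textbf{The paper's route.} The paper argues directly from \Cref{tensor to zero}. For $\mc P \neq \mc M$ and any $\mc G$ lying over $\mc P$, parts (b) and (c) give $\widehat{\kappa(\mc M)} \in \Ker(\widehat{\kappa(\mc G)}\otimes -) = \Loc(\mc G)$. Since $\kappa(\mc M)$ is compact, $\widehat{\kappa(\mc M)}$ is finitely presented. It therefore lies in $\Loc(\mc G)\cap\mc A(\mc T)^{\fp} = \mc G$, whence $\kappa(\mc M)\in\pi_{\mc T}(\mc G)=\mc P$.

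\textbf{Your route.} You instead pass through the preceding lemma, which gives vanishing of $\kappa(\mc H)$ in $\mc T_{\mc Q}$ for $\mc Q\notin\overline{\{\mc P\}}$. You then descend from $\Loc(\mc Q)$ to $\mc Q$ via Neeman's theorem $\Loc(\mc Q)\cap\mc T^c=\mc Q$.

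\textbf{Comparison.} Both arguments use compactness of $\kappa(\mc M)$ at exactly one point. The paper uses it at the abelian level: finitely presented objects of $\Loc(\mc G)$ lie in $\mc G$. You use it at the triangulated level. The paper's route is shorter and more elementary, because the preceding lemma itself rests on Balmer's nilpotence theorem. Yours reuses an already-established result and shows cleanly that compactness only serves to turn ``zero in the stalk $\mc T_{\mc Q}$'' into ``membership in $\mc Q$''. Your explicit justification of $\kappa(\mc M)\neq 0$, via the injective hull of $\bar\unit\neq 0$, also fills in a step the paper merely asserts.
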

\begin{proof}
    If $\mc T$ is residually regular at the point $\mc M = (0)$, then $\kappa(\mc M) \neq 0$ is compact in~$\mc T_{\mc M} = \mc T$. Therefore, we certainly have $\mc M \in \supp_{\mc T}(\kappa(\mc M))$. Now take $\mc P\in \Spc(\mc T^c)$ distinct from $\mc M$. For any $\mc G \in \pi_{\mc T}^{-1}(\{\mc P\})$, we have by \Cref{tensor to zero}
    \[ \kappa(\mc M) \in h_{\mc T}^{-1} \left(\Ker\left(\widehat{\kappa(\mc G)} \otimes -\right)  \right) \cap \mc T^c  =\mc P. \]
    This implies that $\mc P \notin \supp_{\mc T}(\kappa(\mc M))$. 
\end{proof}

\begin{corollary}\label{visibility of unique closed point}
    If a `big' tt-category $\mc T$ is residually regular at a point $\mc P \in \Spc(\mc T^c)$, then $\mc P$ is weakly visible in $\Spc(\mc T^c)$. Consequently, if $\mc T$ is residually regular then $\Spc(\mc T^c)$ is weakly Noetherian.
\end{corollary}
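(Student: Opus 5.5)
The plan is to reduce to the local case and then read off the two Thomason subsets directly from the residue object. Suppose $\mc T$ is residually regular at $\mc P$. Consider the Verdier localisation $L\colon \mc T \to \mc T_{\mc P} = \mc T/\Loc(\mc P)$. By \Cref{homological_spectrum_of_ttquotients} and the standard description of the spectrum of a localisation, the induced map $\varphi_L$ identifies $\Spc((\mc T_{\mc P})^c)$ homeomorphically with the subspace $\gen(\mc P) = \{\mc Q \in \Spc(\mc T^c) \mid \mc Q \subseteq \mc P\}$. Under this identification $\mc P$ corresponds to the unique closed point $\mc M = (0)$, and $\kappa(\mc P) = L(\kappa(\mc H))$ is compact in $\mc T_{\mc P}$. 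By \Cref{support is point} we then have $\supp_{\mc T_{\mc P}}(\kappa(\mc P)) = \{\mc M\}$.

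The main step is to lift this compact object back to $\mc T^c$. Since $(\mc T_{\mc P})^c$ is the idempotent completion of $\mc T^c/\mc P$ (Neeman), the object $\kappa(\mc P) \oplus \Sigma\kappa(\mc P)$ is isomorphic to $L(a)$ for some $a \in \mc T^c$. Its support in $\Spc((\mc T_{\mc P})^c)$ is still $\{\mc M\}$. Support is compatible with $\varphi_L$, in the sense that $\varphi_L^{-1}(\supp_{\mc T}(a)) = \supp_{\mc T_{\mc P}}(L(a))$. It follows that
\[
\supp_{\mc T}(a) \cap \gen(\mc P) = \{\mc P\}.
\]

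To finish, recall from \Cref{visibiliy noeth} that $\gen(\mc P)$ is the complement of $\supp_{\mc T}(\mc P) = \bigcup_{b\in\mc P}\supp(b)$, which is a Thomason subset. Also, $\supp_{\mc T}(a)$ is a closed set with quasi-compact complement, hence Thomason. The displayed equality therefore exhibits $\{\mc P\}$ as the intersection of a Thomason subset with the complement of a Thomason subset, so $\mc P$ is weakly visible. The second statement follows by applying this at every point.

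The only delicate point is the lifting step: one must either check that the support of a compact object of $\mc T_{\mc P}$ agrees with the support of a lift, or avoid idempotent completion via the $X\oplus\Sigma X$ trick (Thomason). I expect this to be routine once that trick is used.
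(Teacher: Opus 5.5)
Your proof is correct and is essentially the paper's argument. The paper also passes to the stalk $\mc T_{\mc P}$ and uses \Cref{support is point} to see that $\{(0)\}=\supp_{\mc T_{\mc P}}(\kappa(\mc P))$ is Thomason, but then cites \cite[Remark 2.9]{BHS} to get from visibility of $(0)$ in the stalk to weak visibility of $\mc P$, whereas you prove that step directly by lifting $\kappa(\mc P)\oplus\Sigma\kappa(\mc P)$ to some $a\in\mc T^c$ (the same device the paper uses in \Cref{residue regularity at closed points}).

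One slip to correct: $\gen(\mc P)$, i.e.\ the image of $\Spc((\mc T_{\mc P})^c)$, is $\{\mc Q \mid \mc P\subseteq\mc Q\}$, not $\{\mc Q\mid \mc Q\subseteq\mc P\}$ (the latter is $\overline{\{\mc P\}}$). Your own later description of $\gen(\mc P)$ as the complement of $\bigcup_{b\in\mc P}\supp_{\mc T}(b)$ already uses the correct set.
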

\begin{proof}
    By \cite[Remark 2.9]{BHS}, $\mc P$ is weakly visible in $\Spc(\mc T^c)$ if and only if the unique closed point $(0)$ of the stalk category $\mc T_{\mc P}$ is visible (see \Cref{visibiliy noeth}). If $\mc T$ is residually at~$\mc P$, then it follows by \Cref{support is point} that
    \begin{equation*}
       \overline{\{(0)\}} =  \left\{ (0) \right\} =\supp_{\mc T_{\mc P}} (\kappa(\mc P)),
    \end{equation*}
    which exhibits $\overline{\{(0)\}} $ as a Thomason subset of $\Spc((\mc T_{\mc P})^c)$. 
\end{proof}

\begin{example}\label{stable homotopy cat}
    For any prime number $p$, the $p$-local stable homotopy category of topological spectra $\SH_{(p)}$ is a local tt-category. Its unique closed point is not weakly visible, as observed in \cite[Example 7.11]{Balmer-Favi} and therefore $\SH_{(p)}$ is not residually regular at this point. By extension, the stable homotopy category of spectra $\SH$ is not residually regular at any of its closed points (since residual regularity is a stalk-local property). 
\end{example}

\begin{remark}
    It is certainly not true that residual regularity of the tt-category $\mc T$ implies that the spectrum $\Spc(\mc T^c)$ is Noetherian: for any non-Noetherian absolutely flat commutative ring $R$ (e.g.\@ an infinite product of fields), the derived category $\D(R)$ is residually regular but $\Spc(\D^{\perf}(R)) \cong \Spec(R)$ is not Noetherian; see~\cite[Section 3]{Stevensonabsflatrings}. This also goes to show that residual regularity should likely only be considered with additional Noetherian hypotheses (cf.\@ \Cref{noetherian caveat}).
\end{remark}

\begin{recollection}\label{constructible top}
    Let $\mc K$ be an essentially small tt-category. A subset of $\Spc(\mc K)$ is called \emph{constructible} if it can be obtained by finite union and finite intersection from the quasi-compact opens of $\Spc(\mc K)$ and their complements (equivalently, from supports of objects and their complements). A \emph{proconstructible} subset of $\Spc(\mc K)$ is an arbitrary intersection of constructibles. 
    We refer the reader to \cite[Section 1.3]{spectralbook} for more on the constructible topology of spectral spaces. 
\end{recollection}

\begin{proposition}\label{residue regularity at closed points}
    Let $\mc T$ be a `big' tt-category and let $\mc M$ be a closed visible point of $\Spc(\mc T^c)$. If $\mc T$ is residually regular at $\mc M$, then $\kappa(\mc M)$ is a compact object in~$\mc T$. 
\end{proposition}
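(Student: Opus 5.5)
Write $Z \vcentcolon= \{\mc M\}$ and $Y \vcentcolon= \supp_{\mc T}$-complement of $\gen(\mc M)$, i.e.\ the Thomason subset with $\Loc(\mc M) = \Loc(\{x \in \mc T^c \mid \supp(x)\subseteq Y\})$. The plan is to show that $\kappa(\mc M)$ lies in a localising subcategory that is left-orthogonal to $\Loc(\mc M)$. Once that is done, maps out of $\kappa(\mc M)$ in $\mc T$ can be computed in the stalk $\mc T_{\mc M}$, where $\kappa(\mc M)$ is compact by assumption.

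\emph{Step 1 (a compact object supported at $\mc M$).} Since $\mc M$ is closed and visible, $Z = \overline{\{\mc M\}}$ is Thomason. Hence there is $a \in \mc T^c$ with $\supp_{\mc T}(a) = \{\mc M\}$. For every $x \in \mc M$ we have $\supp(a)\cap\supp(x) = \emptyset$, since $\mc M \notin \supp(x)$. So $Da \otimes x = 0$, and therefore
\[
\Hom_{\mc T}(a, x \otimes W) \cong \Hom_{\mc T}(\unit, Da \otimes x \otimes W) = 0
\]
for all $W \in \mc T$. Since $\Loc(\mc M)$ is generated by such objects $x \otimes W$ (it is a $\otimes$-ideal), we get $\Hom_{\mc T}(\Loc(a), \Loc(\mc M)) = 0$, i.e.\ $\Loc(a) \subseteq {}^{\perp}\Loc(\mc M)$.

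\emph{Step 2 (the residue object lies in $\Loc(a)$).} This is the main obstacle. I would argue as in the lemma preceding the definition of residual regularity, but now working in the quotient $\mc T' \vcentcolon= \mc T/\Loc(a)$. By \Cref{homological_spectrum_of_ttquotients}, every homological prime of $\mc T'$ comes from some $\mc G \in \Spc^{\mathrm h}(\mc T^c)$ with $a \in \pi_{\mc T}(\mc G)$. Such a $\mc G$ satisfies $\pi_{\mc T}(\mc G) \neq \mc M$, because $a \notin \mc M$. Since $\mc T$ is residually regular at $\mc M$, \Cref{residual regularity implies nerves of steel} shows that $\mc H$ (the unique homological prime over $\mc M$) is the only homological prime over $\mc M$, so in particular $\mc G \neq \mc H$. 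Then \Cref{tensor to zero}(c) gives $\kappa(\mc G)\otimes\kappa(\mc M) = 0$, so every homological residue field of $\mc T'$ kills the image of $\kappa(\mc M)$. The nilpotence theorem \cite[Corollary 4.5]{Balmernilpotence} now makes $\eta_{\mc H}^{\otimes n}$ vanish in $\mc T'$ for some $n \geq 1$. Combined with the split monomorphism of \Cref{tensor to zero}(a), this forces $\kappa(\mc M) = 0$ in $\mc T'$. Hence $\kappa(\mc M) \in \Loc(a)$ (equivalently $\kappa(\mc M) \cong e_Z \otimes \kappa(\mc M)$).

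\emph{Step 3 (conclusion).} Let $L\colon \mc T \to \mc T_{\mc M}$ be the localisation, with fully faithful right adjoint $\iota$. For each $X \in \mc T$ there is a triangle $\Gamma X \to X \to \iota L X \to$ with $\Gamma X \in \Loc(\mc M)$. Applying $\Hom_{\mc T}(\kappa(\mc M), -)$ and using Steps 1 and 2 gives
\[
\Hom_{\mc T}(\kappa(\mc M), X) \cong \Hom_{\mc T}(\kappa(\mc M), \iota L X) \cong \Hom_{\mc T_{\mc M}}(L\kappa(\mc M), LX).
\]
This is natural in $X$. Since $L$ preserves coproducts and $L\kappa(\mc M)$ is compact in $\mc T_{\mc M}$ by residual regularity, the functor $\Hom_{\mc T}(\kappa(\mc M), -)$ commutes with coproducts. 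Hence $\kappa(\mc M)$ is compact in $\mc T$.
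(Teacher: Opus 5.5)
Your argument is correct once one slip is repaired, and it takes a genuinely different route from the paper.

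\textbf{The slip.} $\Loc(a)$ is the localising subcategory generated by $a$. It is a $\otimes$-ideal when $\mc T=\Loc(\unit)$, but not in general: for instance $\Loc(\unit)\neq\mc T$ in $\DPerm(C_2;k)$. So $\mc T/\Loc(a)$ need not be a tt-category, and \Cref{homological_spectrum_of_ttquotients} does not apply to it. Replace $\Loc(a)$ throughout by the localising $\otimes$-ideal $\Loc(\langle a\rangle)=e_Z\otimes\mc T$, which is what your parenthetical in Step 2 actually means. With this change:
\begin{enumerate}
\item Step 1 still goes through, because $\Hom_{\mc T}(a\otimes w,Y)\cong\Hom_{\mc T}(a,Dw\otimes Y)$ for $w\in\mc T^c$ and $\Loc(\mc M)$ is a $\otimes$-ideal.
\item Step 2 runs verbatim. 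Note that $\mc G\neq\mc H$ already follows from $\pi_{\mc T}(\mc G)\neq\mc M=\pi_{\mc T}(\mc H)$; \Cref{residual regularity implies nerves of steel} is only needed to make $\kappa(\mc M)$ well defined.
\item Step 3 only uses $\kappa(\mc M)\in{}^{\perp}\Loc(\mc M)$, so it is unaffected.
\end{enumerate}

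\textbf{Comparison with the paper.} The paper works from the stalk:
\begin{enumerate}
\item It picks $a\in\mc T^c$ with $\kappa(\mc M)\oplus\Sigma\kappa(\mc M)\cong a$ in $\mc T_{\mc M}$.
\item It uses \Cref{support is point} to get $\supp_{\mc T}(a)\cap\gen(\mc M)=\{\mc M\}$.
\item It uses visibility and the constructible topology to show that $\supp_{\mc T}(a)\setminus\{\mc M\}$ is a closed Thomason subset.
\item It splits $a\cong b\oplus c$ with $\supp_{\mc T}(c)=\{\mc M\}$ via Balmer's decomposition theorem.
\item It lifts the stalk isomorphism $\kappa(\mc M)\oplus\Sigma\kappa(\mc M)\cong c$ to $\mc T$, since both objects are $\mc M$-local, i.e.\ \emph{right} orthogonal to $\Loc(\mc M)$.
\end{enumerate}

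You instead first show, with no regularity hypothesis, that $\kappa(\mc H)\in e_{\{\mc M\}}\otimes\mc T$. This is the same nilpotence argument as in the lemma following \Cref{residue objects are local}, run in the finite localisation away from the closed point rather than in a stalk. You then use \emph{left} orthogonality of $e_{\{\mc M\}}\otimes\mc T$ to $\Loc(\mc M)$. That makes $\mc T\to\mc T_{\mc M}$ bijective on maps out of $\kappa(\mc M)$, so compactness lifts directly.

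What each route buys:
\begin{enumerate}
\item Yours is shorter. It avoids the decomposition theorem, the spectral-space topology, and the $\kappa\oplus\Sigma\kappa$ device (Neeman's description of compacts in a Verdier quotient). The price is an appeal to the nilpotence theorem.
\item It also isolates a regularity-free fact about residue objects at closed visible points.
\item The paper's route relies on more classical tt-tools. It also produces an explicit compact object $c$ with $\supp_{\mc T}(c)=\{\mc M\}$ and $c\cong\kappa(\mc M)\oplus\Sigma\kappa(\mc M)$.
\end{enumerate}
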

\begin{proof}
    Since $\kappa(\mc M)$ is compact in $\mc T_{\mc M}$ by assumption, we know that there exists an object $a \in \mc T^c$ and an isomorphism in $\mc T_{\mc M}$:
    \[\kappa(\mc M) \oplus \Sigma \kappa(\mc M) \cong a \in \mc T_{\mc M}.\]
    Of course, then we have 
    \begin{equation}\label{supports}
        \supp_{\mc T_{\mc M}}(a) = \supp_{\mc T_{\mc M}}(\kappa(\mc M)).
    \end{equation}
    Identifying $\Spc((\mc T_{\mc M})^c)$ with the subset $\gen(\mc M) = \{ \mc P \in \Spc(\mc T^c) \mid \mc M \subseteq \mc P\}$ of $\Spc(\mc T^c)$, equality \eqref{supports} and \Cref{support is point} imply that 
    \[\gen(\mc M) \cap \supp_{\mc T}(a)  = \{ \mc M\}.\]
    Now set $Y = \supp_{\mc T}(a) \setminus \{ \mc M\} $. We claim that $Y$ is a closed Thomason subset of $\Spc(\mc T^c)$. To see this, note that $Y = \supp_{\mc T}(a) \cap \{ \mc M\}^c$ is a constructible subset since $\mc M$ is visible by assumption. Furthermore, $Y$ is certainly specialisation closed because $\supp_{\mc T}(a)$ is and $\gen(\mc M) \cap Y  = \varnothing$. Any specialisation closed constructible set is in fact closed, see \cite[Theorem 1.5.4(i)]{spectralbook}. Moreover, the complement $Y^c$ is quasi-compact since it is the union of a point and a quasi-compact subset: $Y^c = \{ \mc M\} \cup \supp_{\mc T}(a)^c $. Thus, $Y$ is a closed Thomason subset.

    So, we have decomposed $\supp_{\mc T}(a)$ into two disjoint closed Thomason subsets: 
    \[ \supp_{\mc T}(a)= Y \cup \{ \mc M\}.\]
    By the decomposition theorem \cite[Theorem 2.11]{supportsandfiltrations}, $a$ can itself by decomposed as a direct sum $a \cong b \oplus c$ for some objects $b, c \in \mc T^c$ with $\supp_{\mc T}(b) = Y$ and $\supp_{\mc T}(c) = \{\mc M \}$. We note that $c$ is an $\mc M$-local object since $\gen(\mc M)^c \cap \supp_{\mc T}(c) = \varnothing$. Furthermore, as $\mc M \notin Y =\supp_{\mc T}(b)$, we have $b \in \mc M$. It follows that $b =0 \in \mc T_{\mc M}$ and thus, in $\mc T_{\mc M}$ we have
    \[\kappa(\mc M) \oplus \Sigma \kappa(\mc M) \cong  a \cong c \in \mc T_{\mc M}.\]
    To summarise, the two $\mc M$-local objects $\kappa(\mc M) \oplus \Sigma \kappa(\mc M)$ (see \Cref{residue objects are local}) and $c$ become isomorphic in $\mc T_{\mc M}$. As such, they must already be isomorphic in $\mc T$. Therefore, the residue object $\kappa(\mc M)$ is compact in $\mc T$ as a direct summand of the compact object $c$. 
\end{proof}
Using the following lemma, we can remove the visibility assumption in the above proposition at the cost of requiring the spectrum to be \emph{semilocal}, i.e.\@ requiring $\Spc(\mc T^c)$ to have only finitely many closed points.  
\begin{lemma}\label{weak vis}
    Let $\mc K$ be an essentially small tt-category and let $\mc M$ be a closed point of~$\Spc(\mc K)$. Suppose that the spectrum $\Spc(\mc K)$ is semilocal. If $\mc M$ is weakly visible, then $\mc M$ is visible. 
\end{lemma}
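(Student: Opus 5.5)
Write $X = \Spc(\mc K)$, which is a spectral space. The plan is to show directly that $\{\mc M\} = \overline{\{\mc M\}}$ is a Thomason subset, i.e.\ that its complement is quasi-compact. Weak visibility gives Thomason subsets $Z_1, Z_2$ with $\{\mc M\} = Z_1 \cap Z_2^c$. Since $\mc M$ is closed, we have $\gen(\mc M) \cap \overline{\{\mc M\}} = \{\mc M\}$, and by \cite[Remark 2.9]{BHS} we may replace $Z_2$ by any Thomason set not containing $\mc M$. The useful reformulation is to find a single Thomason subset $Z$ with $\mc M \in Z$ and $Z$ containing no other closed point. Once we have such a $Z$, we argue as follows. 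Every point of $X$ specialises to some closed point. If $\mc P \in Z$, then $\overline{\{\mc P\}} \subseteq Z$ because $Z$ is specialisation closed, so every closed specialisation of $\mc P$ equals $\mc M$. Hence $Z \subseteq \{\mc P \mid \mc P \rightsquigarrow \mc M \text{ only}\}$.

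The first step is to build $Z$ from semilocality. Let $\mc M = \mc M_0, \mc M_1, \dots, \mc M_n$ be the closed points. Each $\mc M_i$ with $i \geq 1$ is a closed point distinct from $\mc M$, hence not contained in $\mc M$ as prime ideals; by maximality $\mc M_i \not\subseteq \mc M$ means we may pick $a_i \in \mc M_i \setminus \mc M$. (Points are primes, closed points are minimal primes for inclusion in this convention, so we need an element in $\mc M_i\setminus\mc M$; this exists since $\mc M \neq \mc M_i$ and $\mc M_i\not\supseteq\mc M$ by closedness of $\mc M_i$.) Then $\supp(a_i) \ni \mc M$ and $\mc M_i \notin \supp(a_i)$. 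Set $a = a_1 \otimes \dots \otimes a_n$. Then $Z := \supp(a)$ is a closed Thomason subset containing $\mc M$ and no other closed point.

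The second step combines this with weak visibility. Intersect to get $\{\mc M\} = Z \cap Z_1 \cap Z_2^c$, where $Z \cap Z_1$ is Thomason. Now $Y := Z \cap Z_1 \cap Z_2$ is Thomason. Moreover, $Z\cap Z_1 = \{\mc M\} \sqcup Y$, and $\mc M \notin Y$. I claim $Y$ is empty, or at least irrelevant. Any $\mc P \in Y$ has a closed specialisation in $Y \subseteq Z$ (since $Y$ is specialisation closed), and that specialisation must be $\mc M$, yet $\mc M \notin Y$, which is a contradiction. So $Y = \varnothing$ and $\{\mc M\} = Z \cap Z_1$ is Thomason, i.e.\ $\mc M$ is visible.

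The main obstacle is purely point-set: we must make sure every point of a spectral space has a closed point in its closure (true by quasi-compactness of closed subsets), and we must get the inclusion direction right when choosing $a_i$ (prime ideal inclusions versus specialisation in the Balmer topology). The rest is formal.
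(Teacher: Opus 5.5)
Your proof is correct, and it takes a genuinely different and more elementary route than the paper's.

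\textbf{The paper's route.} The paper starts from a closed Thomason subset $Z$ with $Z\cap\gen(\mc M)=\{\mc M\}$. It uses semilocality only to write $Z\setminus\{\mc M\}=Z\cap\bigcup_{i\geq 2}\gen(\mc M_i)$ as a proconstructible set. It then relies on the constructible topology of spectral spaces \cite[Theorem 1.5.4(i), Corollary 1.3.18(i)]{spectralbook}: specialisation-closed plus proconstructible implies closed, and constructible plus open implies quasi-compact open. This gives that $\{\mc M\}^c$ is quasi-compact.

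\textbf{Your route.} You spend semilocality on a prime-avoidance construction: $a=a_1\otimes\cdots\otimes a_n$ has $\supp(a)$ containing $\mc M$ and no other closed point. After that, the only topological input is that every nonempty specialisation-closed subset contains a closed point. This kills $Z\cap Z_1\cap Z_2$ and gives $\{\mc M\}=\supp(a)\cap Z_1$ directly.

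\textbf{What your route buys.}
\begin{enumerate}
\item You need no spectral-space machinery.
\item You get an explicit witness: writing $Z_1=\bigcup_j\supp(b_j)$, you obtain $\{\mc M\}=\supp(a\otimes b_j)$ for any $j$ with $\mc M\in\supp(b_j)$.
\item You get a sharper statement. Your argument only uses that some $\supp(a)$ contains $\mc M$ and no other closed point. That condition is also necessary for visibility (take $a$ with $\supp(a)=\{\mc M\}$). So a closed point is visible if and only if it is weakly visible and separated from the other closed points by the support of a single object.
\end{enumerate}

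\textbf{What the paper's route buys.} It works directly with the reformulation of weak visibility via $\gen(\mc M)$. It also stays in the same proconstructible framework already used in the proof of \Cref{residue regularity at closed points}.

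\textbf{Minor slips}, none affecting the argument:
\begin{enumerate}
\item The aside that you may replace $Z_2$ by \emph{any} Thomason set not containing $\mc M$ is false as stated (take $Z_2=\varnothing$). What is true is that you may replace it by the largest such set, $\Spc(\mc K)\setminus\gen(\mc M)$. You never use this aside.
\item Choosing $a_i\in\mc M_i\setminus\mc M$ requires $\mc M_i\not\subseteq\mc M$. This follows from the minimality (closedness) of $\mc M$, not of $\mc M_i$ as your parenthetical says, and not from any maximality. The fact you assert is nonetheless true.
\item In the degenerate case $n=0$, take $a=\unit$.
\end{enumerate}
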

\begin{proof}
    By \cite[Remark 2.8]{BHS}, the weak visibility of $\mc M$ implies the existence of a closed Thomason subset $Z \subseteq \Spc(\mc K)$ such that 
    \begin{equation*}
        Z \cap \gen(\mc M) = \{\mc M \}. 
    \end{equation*}
    We claim that $Y = Z \setminus \{ \mc M\}$ is a closed Thomason subset. For this, let $\mc M_1, \mc M_2, \dots, \mc M_k$ denote the finitely many closed points of $\Spc(\mc K)$ and, say, $\mc M_1 = \mc M$. Then 
    \begin{equation*}
        \Spc(\mc K) = \bigcup_{i=1}^k \gen(\mc M_i).
    \end{equation*}
    So, using that $Y \cap \gen(\mc M_1) = \varnothing$, we find 
    \begin{equation*}
        Y = \bigcup_{i=1}^k Y \cap \gen(\mc M_i) = \bigcup_{i=2}^k Y \cap \gen(\mc M_i) = \bigcup_{i=2}^k Z \cap \gen(\mc M_i) = Z \cap  \bigcup_{i=2}^k \gen(\mc M_i).
    \end{equation*}
   Since each $\gen(\mc M_i)$ is the complement of a Thomason subset, this shows that $Y$ is a proconstructible subset (see \Cref{constructible top}). We also have that $Y$ is specialisation closed since $Z$ is and $\gen(\mc M) \cap Y  = \varnothing$. 
    Using \cite[Theorem 1.5.4(i)]{spectralbook}, it follows that $Y$ is in fact a closed subset. Moreover, the complement $Y^c$ is quasi-compact as the union of a point and a quasi-compact subset: $Y^c = \{ \mc M\} \cup Z^c $. So $Y$ is indeed a closed Thomason subset. This implies that $\{ \mc M \}^c = Y \cup Z^c$ is constructible. Since $\mc M$ is a closed point, $\{ \mc M\}^c$ is certainly an open subset as well. So, by application of \cite[Corollary 1.3.18(i)]{spectralbook}, the subset $\{ \mc M \}^c$ is in fact quasi-compact open. Therefore we conclude that $\{ \mc M\}$ is a closed Thomason subset, i.e.\@ $\mc M$ is a visible point.
\end{proof}

\begin{corollary}
    Let $\mc T$ be a `big' tt-category and let $\mc M$ be a closed point of $\Spc(\mc T^c)$. Suppose that the spectrum $\Spc(\mc T^c)$ is semilocal. If $\mc T$ is residually regular at $\mc M$, then $\kappa(\mc M)$ is a compact object in~$\mc T$. 
\end{corollary}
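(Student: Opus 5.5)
The plan is to reduce the statement to \Cref{residue regularity at closed points} by checking that $\mc M$ is visible, using \Cref{weak vis}. The chain is: residual regularity gives weak visibility, and semilocality upgrades weak visibility to visibility for a closed point.

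First, since $\mc T$ is residually regular at $\mc M$, \Cref{visibility of unique closed point} tells us that $\mc M$ is a weakly visible point of $\Spc(\mc T^c)$. That corollary only uses the stalk category $\mc T_{\mc M}$ and \Cref{support is point}, so no further hypotheses are needed here. In addition, \Cref{residual regularity implies nerves of steel} ensures that $\pi_{\mc T}^{-1}(\{\mc M\})$ is a singleton, so the notation $\kappa(\mc M)$ of \Cref{kappanot} is unambiguous.

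Next, apply \Cref{weak vis} to the essentially small tt-category $\mc K = \mc T^c$ and its closed point $\mc M$. The spectrum $\Spc(\mc T^c)$ is semilocal by assumption and $\mc M$ is weakly visible by the previous step, so the lemma yields that $\mc M$ is visible.

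Finally, $\mc M$ is now a closed visible point at which $\mc T$ is residually regular, so \Cref{residue regularity at closed points} applies directly and shows that $\kappa(\mc M)$ is compact in $\mc T$. I do not expect a genuine obstacle: all the topological work (the passage from weak visibility to visibility) is contained in \Cref{weak vis}, and all the tt-geometric work (splitting off the compact object via the decomposition theorem) is contained in \Cref{residue regularity at closed points}. The only point to double-check is that the notion of weak visibility produced by \Cref{visibility of unique closed point} is exactly the hypothesis consumed by \Cref{weak vis}, which it is, since both refer to the definition from \cite{BHS}.
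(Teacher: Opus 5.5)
Your proposal is correct and matches the paper's proof exactly: you get weak visibility from \Cref{visibility of unique closed point}, upgrade it to visibility via \Cref{weak vis} using semilocality, and then conclude with \Cref{residue regularity at closed points}. Your extra remark that \Cref{residual regularity implies nerves of steel} makes the notation $\kappa(\mc M)$ well defined is a harmless and sensible addition.
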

\begin{proof}
    By \Cref{visibility of unique closed point} and \Cref{weak vis}, the closed point $\mc M$ is visible. The statement now follows by application of \Cref{residue regularity at closed points}. 
\end{proof}

\begin{remark}\label{residual regularity of local tt-categories}
    In algebraic geometry, the locus of regular points of a Noetherian scheme $X$ is a generalisation closed subset. As a consequence, $X$ is regular if and only if it is regular at all its closed points. One can wonder (and hope) whether the same will be true for residual regularity of tt-categories. Further research will tell. 
\end{remark}

The next result can be viewed as a generalisation of \Cref{going to tt-field}. 
\begin{proposition}\label{left to right}
    Let $F \colon \mc T \to \mc S$ be a geometric tt-functor with right adjoint $U$ and let $\mc Q \in \Spc(\mc S^c)$ and $\mc P \vcentcolon= \varphi_{F}(\mc Q)$. Suppose that the following conditions hold:  
    \begin{enumerate}[label=(\arabic*)]
        \item The point $ \mc Q$ is visible in $\Spc(\mc S^c)$.
        \item The point $\mc Q$ is closed in the fibre $\varphi_F^{-1}(\{\mc P\})$.
        \item The right adjoint $U$ preserves compact objects. 
    \end{enumerate}
    If $\mc S$ is residually regular at $\mc Q$, then $\mc T$ is residually regular at $\mc P$.
\end{proposition}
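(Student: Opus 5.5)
The idea is to push everything down to the local category $\mc T_{\mc P}$ through a suitable localisation of $\mc S$. In that localisation $\mc Q$ becomes a closed visible point. Then \Cref{residue regularity at closed points} makes the residue object of $\mc S$ at $\mc Q$ compact globally there, and we transport it through the right adjoint.

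\emph{Step 1: base change.} Let $\mc J := \langle F(\mc P)\rangle \subseteq \mc S^c$ and $\mc S' := \mc S/\Loc(\mc J)$. Then $F$ induces a geometric tt-functor $F'\colon \mc T_{\mc P} \to \mc S'$ with right adjoint $U'$.

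By the projection formula, $U(\Loc(F(\mc P))) \subseteq \Loc(\mc P)$. Using this, I would check that $U'$ commutes with the localisations, i.e.\ $U'(L_{\mc S} X) \cong L_{\mc T} U(X)$. Since compacts of $\mc S'$ are summands of images of $\mc S^c$, this shows that $U'$ preserves compact objects, by hypothesis (3).

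The spectrum $\Spc(\mc S'^c)$ identifies with $\varphi_F^{-1}(\gen(\mc P))$, and $\mc Q$ lies in it.

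\emph{Step 2: $\mc Q$ is closed and visible in $\Spc(\mc S'^c)$.} Suppose $\mc Q' \subseteq \mc Q$ with $\mc Q'$ in $\Spc(\mc S'^c)$. Then $\mc P \subseteq \varphi_F(\mc Q') \subseteq \varphi_F(\mc Q) = \mc P$, so $\mc Q'$ lies in the fibre over $\mc P$. Hypothesis (2) then forces $\mc Q' = \mc Q$, so $\mc Q$ is closed in $\Spc(\mc S'^c)$.

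For visibility, $\overline{\{\mc Q\}}$ is Thomason in $\Spc(\mc S^c)$ by hypothesis (1), say cut out by supports of compacts. Intersecting with the subspace gives supports of the images in $\mc S'$, so $\{\mc Q\}$ is a closed Thomason subset of $\Spc(\mc S'^c)$. This transfer of visibility to the subspace is the point I expect to need the most care, in particular the quasi-compactness of complements.

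\emph{Step 3: conclude.} Let $\mc G$ be a homological prime lying over $\mc Q$ with $\kappa(\mc G)$ compact in $\mc S_{\mc Q}$, and let $\mc G'$ be the corresponding homological prime of $\mc S'$. By \Cref{homological_spectrum_of_ttquotients}, the stalk $\mc S'_{\mc Q}$ agrees with $\mc S_{\mc Q}$, and $\kappa(\mc G')$ is the image of $\kappa(\mc G)$. Hence $\mc S'$ is residually regular at the closed visible point $\mc Q$.

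\Cref{residue regularity at closed points} then gives that $\kappa(\mc G')$ is compact in $\mc S'$, so by Step 1 $U'(\kappa(\mc G'))$ is compact in $\mc T_{\mc P}$.

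Now set $\mc H' := \varphi^h_{F'}(\mc G')$, a homological prime of $\mc T_{\mc P}$ lying over $(0)$. By \Cref{functoriality_of_the_homological_spectrum}, $\kappa(\mc H')$ is a direct summand of $U'(\kappa(\mc G'))$. This is the summand relation used in \Cref{going to tt-field}, with the roles read correctly. Hence $\kappa(\mc H')$ is compact in $\mc T_{\mc P}$.

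Finally, via \Cref{homological_spectrum_of_ttquotients}, $\mc H'$ corresponds to a homological prime $\mc H$ of $\mc T$ lying over $\mc P$, with $\kappa(\mc H') = \kappa(\mc H)$ in $\mc T_{\mc P}$. This is exactly residual regularity of $\mc T$ at $\mc P$.
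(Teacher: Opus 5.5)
Your proposal is correct and follows the paper's proof essentially step for step. It uses the same localisation $\mc S/\Loc(\langle F(\mc P)\rangle)$, the same closedness argument via hypothesis (2), the same appeal to \Cref{residue regularity at closed points}, and the same summand relation between $\kappa(\mc H)$ and $U(\kappa(\mc G))$; you apply that relation to the induced functor $F'$ rather than to $F$ before localising, which is only a bookkeeping difference. The visibility transfer you flag is harmless: $\overline{\{\mc Q\}} = \supp_{\mc S}(s)$ for some $s \in \mc S^c$, and its trace on $\Spc(\mc S'^c)$ is just the support of the image of $s$ in $\mc S'$, which automatically has quasi-compact complement.
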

\begin{proof}
    Consider the thick $\otimes$-ideal $\langle F(\mc P) \rangle $ of $\mc S^c$ generated by $F(\mc P)$. Since $\varphi_F(\mc Q) = \mc P$, the functor $F$ induces a geometric tt-functor~$\tilde{F} \colon \mc T_{\mc P} \to \mc S/\Loc(\langle F(\mc P) \rangle)$. Using the projection formula for $F \dashv U$, we note that $U(\mc S^c \otimes F(\mc P)) \subseteq \Loc(\mc P)$, as $\Loc(\mc P)$ is a $\otimes$-ideal in $\mc T$. It follows 
    \begin{equation*}
        U\left(\langle F(\mc P) \rangle\right) = U\left( \thick(\mc S^c \otimes F(\mc P))\right) \subseteq \thick \left( U(\mc S^c \otimes F(\mc P))\right) \subseteq \Loc(\mc P). 
    \end{equation*}
    Therefore, by the universal property, $U$ induces a functor $\tilde{U}\colon \mc S/\Loc(\langle F(\mc P) \rangle) \to \mc T_{\mc P} $ such that $\tilde{F}$ is left adjoint to $\tilde{U}$. 
    Altogether, we get a diagram of adjunctions
\begin{equation}\label{commm diagram}
    \begin{tikzcd}
	{\mc T} & {\mc T_{\mc P}} \\
	{\mc S} & {\mc S/\Loc(\langle F(\mc P) \rangle)}
	\arrow[two heads, from=1-1, to=1-2]
	\arrow[""{name=0, anchor=center, inner sep=0}, "F"', shift right=2, from=1-1, to=2-1]
	\arrow[""{name=1, anchor=center, inner sep=0}, "{\tilde{F}}"', shift right=2, from=1-2, to=2-2]
	\arrow[""{name=2, anchor=center, inner sep=0}, "U"', shift right=2, from=2-1, to=1-1]
	\arrow["L"', two heads, from=2-1, to=2-2]
	\arrow[""{name=3, anchor=center, inner sep=0}, "{\tilde{U}}"', shift right=2, from=2-2, to=1-2]
	\arrow["\dashv"{anchor=center}, draw=none, from=0, to=2]
	\arrow["\dashv"{anchor=center}, draw=none, from=1, to=3]
\end{tikzcd}
\end{equation}
   in which the two squares commute.

Since $\varphi_F(\mc Q) = \mc P$ it follows that $\langle F(\mc P) \rangle \subseteq \mc Q$, whereby $\mc Q$ corresponds to a point $\tilde{\mc Q}$ of $\Spc(\mc S^c/\langle F(\mc P) \rangle)$ that gets mapped to the unique closed $(0) \in \Spc((\mc T_{\mc P})^c)$ by $\varphi_{\tilde{F}}$. We claim that $\tilde{\mc Q}$ is a closed visible point of $\Spc(\mc S^c/\langle F(\mc P) \rangle)$. After all, if there exists a prime ideal $\mc Q'$ of $\mc S^c$ such that 
\begin{equation*}
    F(\mc P)  \subseteq \mc Q' \subseteq \mc Q,
\end{equation*}
then 
\[\mc P \subseteq F^{-1} F(\mc P) \subseteq \varphi_F(\mc Q') \subseteq \varphi_F( \mc Q) = \mc P.\]
Therefore, $\varphi_F(\mc Q') =\mc P$, i.e.\@ $\mc Q'$ is a point in the fibre $\varphi_F^{-1}\{\mc P\}$ with $\mc Q' \subseteq \mc Q$. As $\mc Q$ is closed in the fibre, we must have $\mc Q' = \mc Q$. Thus, $\tilde{\mc Q}$ is a closed point of $\Spc(\mc S^c/\langle F(\mc P) \rangle)$. By a similar reasoning, using that $\mc Q$ is visible in $\Spc(\mc S^c)$ by assumption, it follows that $\tilde{\mc Q}$ is a visible point of the spectrum of $\mc S^c/\langle F(\mc P)\rangle$.

Now, let $\mc G$ be the unique homological prime in $\Spc^{\mathrm{h}}(\mc S^c)$ lying over $\mc Q$ and set $\mc H \vcentcolon= \varphi^h_F(\mc G) \in \Spc^{\mathrm{h}}(\mc T^c)$. By \Cref{functoriality_of_the_homological_spectrum}, the residue object $\kappa(\mc H)$ is a direct summand of $U(\kappa(\mc G))$ in $\mc T$. It follows, by the commutativity of diagram \eqref{commm diagram}, that $\kappa(\mc H)$ is a direct summand of $\tilde{U}(\kappa(\mc G))$ in $\mc T_{\mc P}$. By \Cref{residue regularity at closed points}, the residue object $\kappa(\mc G) \in \mc S$ is compact in $\mc S/\Loc(\langle F(\mc P)\rangle)$. The commutativity of diagram \eqref{commm diagram} implies that $\tilde{U}$ preserves compact objects, since by assumption $U$ does. Therefore, the residue object $\kappa(\mc H)$ is compact in $\mc T_{\mc P}$ as a direct summand of the compact object $\tilde{U}(\kappa(\mc Q)) \in \mc T_{\mc P}$.
\end{proof}

\section{A descent-ascent statement}\label{descent stat sec}
In this section, we will prove a converse to the statement of \Cref{left to right} under stronger conditions. We begin by recalling some of the theory of separable ring objects in symmetric monoidal categories. 

\begin{recollection}
    Let $(\mc C, \otimes, \unit)$ be a symmetric monoidal idempotent complete additive category. A \emph{ring object} $A$ in $\mc C$ is a monoid $(A, \mu \colon A \otimes A \to A, u \colon \unit \to A)$ with associative multiplication $\mu$ and two-sided unit $u$. It is called commutative when $\mu \circ (12) = \mu$, where $(12)$ denotes the isomorphism permuting the tensor factors $X_1 \otimes X_2 \xrightarrow{\sim} X_2 \otimes X_1$. All ring objects in this paper will be commutative and we often will simply call $A$ a \emph{ring in $\mc C$}.
    
    A (left) \emph{$A$-module} is a pair $(X, m \colon A \otimes X \to X)$ with $X \in \mc C$ and $m$ a morphism in $\mc C$ compatible with the ring structure in the usual way. The \emph{Eilenberg-Moore category} $A\text{-}\Mod_{\mc C}$ has left $A$-modules as its objects and left $A$-linear maps as its morphisms. It is additive and idempotent complete.
    Every object $X \in \mc C$ gives rise to a free $A$-module $F_A(X) \vcentcolon= A \otimes X$ with $A$-action given by $\mu \otimes X$. We call the functor $F_A: \mc C \to A\text{-} \Mod_{\mc C}$ the \emph{extension-of-scalars functor}, and write $U_A$ for its forgetful right adjoint: 
\[\begin{tikzcd}
	{\mc C} \\
	{A\text{-}\Mod_{\mc C} \rlap{ .}}
	\arrow[""{name=0, anchor=center, inner sep=0}, "{F_A}"', shift right=2, from=1-1, to=2-1]
	\arrow[""{name=1, anchor=center, inner sep=0}, "{U_A}"', shift right=2, from=2-1, to=1-1]
	\arrow["\dashv"{anchor=center}, draw=none, from=0, to=1]
\end{tikzcd}\]

A ring $A$ in $\mc C$ is called \emph{separable} if the multiplication map $\mu$ admits an $A\text{-}A$-bilinear section $\sigma: A \to A\otimes A$, i.e.\@ we have $\mu \sigma = 1_A$ and the diagram 
\[\begin{tikzcd}
	& {A \otimes A} & \\
	{A \otimes A \otimes A} & A & {A \otimes A \otimes A \rlap{ .}} \\
	& {A \otimes A}
	\arrow["{\sigma \otimes 1}"', from=1-2, to=2-1]
	\arrow["\mu", from=1-2, to=2-2]
	\arrow["{1 \otimes \sigma}", from=1-2, to=2-3]
	\arrow["{1 \otimes \mu}"', from=2-1, to=3-2]
	\arrow["\sigma", from=2-2, to=3-2]
	\arrow["{\mu \otimes 1}", from=2-3, to=3-2]
\end{tikzcd}\]
is commutative. 
When the ring $A$ is separable, the counit of the adjunction $F_A \dashv U_A$ has a natural section, defined for any $A$-module $(X, m)$ by 
\begin{equation}\label{splitting}
    (1_A \otimes m) \circ (\sigma \otimes 1_X) \circ (u \otimes 1_X): X \to A \otimes X.
\end{equation}
It follows that $(X, m)$ is a direct summand of the free module $F_AU_A(X, m) = F_A(X)$.

Moreover, if $ A$ is separable, $A\text{-}\Mod_{\mc C}$ can be endowed with a symmetric monoidal structure under which $F_A$ becomes symmetric monoidal, see \cite[Proposition 1.11]{Bregje}. In this case, the adjunction $F_A \dashv U_A$ satisfies a projection formula: for all $Y \in \mc C$ and $X \in A\text{-}\Mod_{\mc C}$ there is a natural isomorphism $U_A(X \otimes F_A(Y)) \cong U_A(X) \otimes Y$. 

Finally, we note that when $A$ is a rigid, commutative and separable ring object in $\mc C$, the adjunction $F_A \dashv U_A$ is \emph{ambidextrous}, that is, we also have $U_A \dashv F_A$. After all, by \cite[Proposition 2.46]{Sandersetale}, these assumptions imply that $A$ is in fact a Frobenius ring. In that case, the ambidexterity of the adjunction is classical. 
\end{recollection}

The following proposition records some properties of separable rings in abelian categories, tailored to our needs. 
\begin{proposition}\label{sep ext in abelian cats}
    Let $\mc A$ be a locally coherent Grothendieck abelian category with colimit-preserving symmetric monoidal structure and let $A \in \mc A$ be a rigid, commutative and separable ring object. Then we have:
    \begin{enumerate}
        \item The category $A\text{-}\Mod_{\mc A}$ is a locally coherent Grothendieck category that can be endowed with a colimit-preserving symmetric monoidal structure. 
        Consequently, the functors $F_A$ and $U_A$ are exact and preserve colimits, finitely presented objects and injective objects. 
        \item \label{sep ext in abelian cats (b)} For every Serre $\otimes$-ideal $\mc I$ of $\mc A^{\fp}$, the Serre subcategory $U_A^{-1}(\mc I)$ of $\left(A\text{-}\Mod_{\mc A}\right)^{\fp}$ is a $\otimes$-ideal which equals the Serre $\otimes$-ideal $\langle F_A(\mc I)\rangle$ generated by $F_A(\mc I)$.
        \end{enumerate}
        Furthermore, if $\mc B$ is another locally coherent Grothendieck abelian category with colimit-preserving symmetric monoidal structure and $F\colon \mc A \to \mc B$ is an exact, colimit-preserving, symmetric monoidal functor that sends finitely presented objects to finitely presented objects, we have:
        \begin{enumerate}[resume]
        \item The object $B \vcentcolon =F(A) \in \mc B$ is a rigid, commutative and separable ring object and there exists an exact, colimit-preserving, symmetric monoidal functor $\tilde{F}\colon A\text{-}\Mod_{\mc A} \to B\text{-}\Mod_{\mc B}$ which sends finitely presented objects to finitely presented objects and such that $F_B F \cong \tilde{F} F_A$ and $F U_A \cong U_B \tilde{F}$.
        \item If $F$ is the Gabriel quotient $Q_{\mc I} \colon \mc A \to \mc A/\Loc(\mc I)$ where $\mc I \subseteq \mc A^{\fp}$ is a Serre $\otimes$-ideal, then $\tilde{F}$ identifies with the Gabriel quotient of $A\text{-}\Mod_{\mc A}$ by $\Loc(U_A^{-1}(\mc I))$. 
    \end{enumerate}
\end{proposition}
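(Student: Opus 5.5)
We prove the four parts in order. The basic tool is the explicit splitting \eqref{splitting}: every $A$-module is a natural retract of a free module $F_A(X)$. The other tool is ambidexterity $F_A \dashv U_A \dashv F_A$, which comes from $A$ being rigid, commutative and separable.

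For (a), we first show that $A\text{-}\Mod_{\mc A}$ is abelian and Grothendieck. Since $A\otimes -$ is exact (a rigid object is flat) and colimit-preserving, kernels and colimits of $A$-modules can be computed in $\mc A$. So $U_A$ creates them and $A\text{-}\Mod_{\mc A}$ is Grothendieck abelian. As $F_A$ is the left adjoint of a faithful exact functor, the objects $F_A(g)$ with $g$ ranging over finitely presented generators of $\mc A$ generate.

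For local coherence we use ambidexterity. Because $U_A$ is left adjoint to the colimit-preserving functor $F_A$, we get $\Hom(U_A M,-)\cong\Hom(M,F_A(-))$, so $U_A$ preserves finitely presented objects. $F_A$ preserves them for the dual reason. An $A$-module is then finitely presented if and only if its underlying object is, because such a module is a retract of $F_AU_A$ of itself. The finitely presented objects are therefore closed under kernels, as in $\mc A$, which gives local coherence.

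The monoidal structure is the one of \cite[Proposition 1.11]{Bregje}, namely $M\otimes_A N$ defined as the image of the separability idempotent on $M\otimes N$. It preserves colimits because it is a natural retract of $\otimes$. Preservation of injectives follows because both functors are right adjoints of exact functors: $U_A$ is right adjoint to $F_A$, and $F_A$ is right adjoint to $U_A$.

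For (b), the subcategory $U_A^{-1}(\mc I)$ is Serre because $U_A$ is exact. It is a $\otimes$-ideal by the projection formula: $M\otimes_A N$ is a retract of $M\otimes_A F_AU_AN = F_A(U_AM\otimes U_AN)$, whose image under $U_A$ is $A\otimes U_AM\otimes U_AN \in \mc I$. It contains $F_A(\mc I)$, since $U_AF_A(x)=A\otimes x\in\mc I$, and hence contains $\langle F_A(\mc I)\rangle$. Conversely, if $U_AM\in\mc I$ then $M$ is a retract of $F_AU_AM\in F_A(\mc I)$, which gives equality.

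For (c), symmetric monoidal functors preserve commutative rings, duals, and the section $\sigma$ together with its bilinearity diagram, so $B$ is a rigid, commutative and separable ring object. We define $\tilde F(M,m)=(FM, Fm\circ \text{(monoidal iso)})$. This is exact and colimit-preserving because colimits of modules are computed underlying. It preserves finitely presented objects by the criterion from (a), and it is monoidal because $\otimes_A$ is the image of an idempotent built from $\sigma$, which $F$ preserves. The isomorphisms $F_BF\cong\tilde FF_A$ and $FU_A\cong U_B\tilde F$ hold by construction.

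For (d), which we expect to be the main obstacle, we must check the universal property of the Gabriel quotient. First, the kernel of $\tilde F$ is exactly $\Loc(U_A^{-1}(\mc I))$. Indeed, $\tilde F M=0$ if and only if $Q_{\mc I}U_AM=0$, that is, $U_AM\in\Loc(\mc I)$. Since $M$ is a filtered colimit of finitely presented modules, and a retract argument through $F_A$ applies, this is equivalent to $M\in\Loc(U_A^{-1}(\mc I))$.

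Second, we need to show that $\tilde F$ is essentially surjective and that $\tilde F$ exhibits $B\text{-}\Mod$ as the quotient. Here we would use the right adjoint: $R_{\mc I}$ lifts to modules because it is lax monoidal, so $\tilde R(N)=R_{\mc I}N$ with its induced action is fully faithful and right adjoint to $\tilde F$. The counit is invertible, since it is invertible after applying the conservative functor $U_B$. A quotient with a fully faithful right adjoint and the prescribed kernel is the Gabriel quotient, which concludes (d).
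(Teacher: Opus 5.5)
Your proposal is correct and follows essentially the same route as the paper: ambidexterity $U_A \dashv F_A \dashv U_A$ and the splitting \eqref{splitting} give (a) and (b), the same explicit $\tilde F$ gives (c), and (d) comes from lifting the fully faithful right adjoint $R_{\mc I}$ to modules via its lax monoidal structure. One small slip in (b): $M\otimes_A F_AU_AN$ is not isomorphic to $F_A(U_AM\otimes U_AN)$; the projection formula gives $U_A(M\otimes_A F_AU_AN)\cong U_AM\otimes U_AN$, or you can use the retract $F_AU_AM\otimes_A F_AU_AN\cong F_A(U_AM\otimes U_AN)$ instead, and either way the underlying object lies in $\mc I$, so your conclusion stands.
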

\begin{proof}
    Most of (a) is straightforward using the fact that $U_A \dashv F_A \dashv U_A$, that $U_A$ is faithful (in particular, it creates colimits and reflects exactness) and the splitting \eqref{splitting}. As mentioned before, the fact that $A\text{-}\Mod_{\mc A}$ can be endowed with a symmetric monoidal structure can be found in \cite[Proposition 1.11]{Bregje}. From the description of the tensor product in loc.\@ cit.\@ it follows readily that this tensor product is colimit-preserving in both variables.

    For (b), the projection formula guarantees that $U_A^{-1}(\mc I)$ is a Serre $\otimes$-ideal of $\left(A\text{-}\Mod_{\mc A}\right)^{\fp}$. The asserted equality with $\langle F_A(\mc I) \rangle$ follows from the splitting \eqref{splitting}.

    Now assume the hypotheses of part (c). It is clear that $B = F(A)$ inherits the structure of a commutative, rigid and separable ring, as $F$ is monoidal. The existence of an additive symmetric monoidal functor $\tilde{F}$ with $F_B F \cong \tilde{F} F_A$ and $F U_A \cong U_B \tilde{F}$ is guaranteed by \cite[Proposition 1.2.7]{Pauwelsphd}. On objects $(X, m) \in A\text{-}\Mod_{\mc A}$, the functor $\tilde{F}$ is defined by $\tilde{F}(X, m) = F(X)$ with $B$-module structure given by $$B \otimes F(X) \cong F(A \otimes X) \xrightarrow{F(m)} F(X),$$ and on morphisms $\tilde{F}$ is given by $\tilde{F}(f) = F(f)$. 
    The remaining properties of $\tilde{F}$ (exactness, preservation of colimits and finitely presented objects) all follow from the natural isomorphisms $F_B F \cong \tilde{F} F_A, F U_A \cong U_B \tilde{F}$ and the relevant properties of $F, F_A, F_B, U_A$ and~$U_B$.

    Lastly, if $\mc I$ is a Serre $\otimes$-ideal of $\mc A^{\fp}$ and $F$ is the localisation $Q_{\mc I} \colon \mc A \to \mc B \vcentcolon= \mc A/\Loc(\mc I)$ with right adjoint $R_{\mc I}$, we note that $\tilde{F}=\tilde{Q}_{\mc I}$ has a right adjoint $\tilde{R}_{\mc I}$ given by sending a $Q_{\mc I}(A)$-module $(Y, n)$ to the $A$-module $R_{\mc I}(Y)$ with multiplication
    \begin{equation*}
        A \otimes R(Y) \xrightarrow{\eta \otimes 1} RQ(A) \otimes R(Y) \to R(Q(A) \otimes Y) \xrightarrow{R(n)} R(Y) ,
    \end{equation*}
    where $\eta$ is the unit of the adjunction $Q_{\mc I} \dashv R_{\mc I}$. Here we have used that $R_{\mc I}$ is a lax monoidal functor. Since $Q_{\mc I}R_{\mc I} \cong \id$ ($R_{\mc I}$ is fully faithful) we also have that $\tilde{Q}_{\mc I}\tilde{R}_{\mc I} \cong \id$. Therefore, $\tilde{Q}_{\mc I}$ induces an equivalence $A\text{-}\Mod_{\mc A}/ \Ker(\tilde{Q}_{\mc I}) \xrightarrow{\sim} Q_{\mc I}(A)\text{-}\Mod_{\mc B}$. As
    \begin{equation*}
        \Ker(\tilde{Q}_{\mc I}) = U_A^{-1}(\Loc(\mc I)) = \Loc(U_A^{-1}(\mc I)),
    \end{equation*}
    this proves part (d). 
\end{proof}

\begin{recollection}
    In the tt-setting, Balmer proved in \cite{tt-rings} that extensions along separable ring objects
preserve the triangulation\footnote{This result requires a stronger definition of `triangulated category' than the usual definition à la Verdier. Luckily, any enhanceable tt-category satisfies this stronger axiomatic, so there is no real restriction in terms of applications; see \cite[Section 5]{tt-rings}. }: when $\mc T$ is a `big' tt-category and $A \in \mc T^c$ is a separable ring, $A\text{-}\Mod_{\mc T}$ is a `big' tt-category, extension-of-scalars $F_A$ is a geometric tt-functor and $U_A$ is exact. The exact triangles in $A\text{-}\Mod_{\mc T}$ are exactly those created by~$U_A$. Moreover, since we have $U_A \dashv F_A$, the functor $U_A$ preserves compact objects.

The notion of the \emph{degree of a separable ring} $A$ in a tt-category $\mc T$ has been introduced in \cite{finitedegree} and is denoted by $\deg_{\mc T}(A)$. We will use it as a black box here. Having finite degree is a mild hypothesis which holds for the compact separable rings in all `standard' tt-categories in algebraic geometry, homotopy
theory or modular representation theory. Nevertheless, it is not a redundant condition as shown in \cite{infdegttrings}. 
\end{recollection}

\begin{definition}
    Let $F\colon \mc T \to \mc S$ be a geometric tt-functor between `big' tt-categories. We say that $F$ is a \emph{finite separable extension} if there is a compact and separable ring $A$ in $\mc T$ and an equivalence of tt-categories $\mc S \cong A\text{-}\Mod_{\mc T}$ such that $F \colon \mc T \to \mc S$ is identified with the extension-of-scalars functor $F_A \colon \mc T \to  A\text{-}\Mod_{\mc T}$. If the ring $A$ is of finite degree in $\mc T$, then we say that $F$ is a \emph{finite separable extension of finite degree}. 
\end{definition}

It should come as no surprise that we wish to apply \Cref{sep ext in abelian cats} to the case where the categories $\mc A$ and $\mc B$ are (quotients of) the module categories of some `big' tt-categories. The key to unlock this is given by the following lemma: 
\begin{lemma}\label{compatibilities}
Let $\mc T$ be a `big' tt-category and $A \in \mc T^c$ a separable ring. Then the object $\hat{A} \in \mc A(\mc T)$ is a rigid, commutative and separable ring object and there exists an equivalence ${E \colon \mc A(A\text{-}\Mod_{\mc T}) \to \hat{A}\text{-}\Mod_{\mc A( \mc T)}}$ such that $E \circ \hat{F}_A\cong F_{\hat{A}}$ and $U_{\hat{A}} \circ E \cong \hat{U}_A$:
\[\begin{tikzcd}
	{\mc A(\mc T)} & {\mc A(\mc T)} \\
	{\mc A(A\text{-}\Mod_{\mc T})} & {\hat{A}\text{-}\Mod_{\mc A( \mc T)} \rlap{ .}}
	\arrow[equals, from=1-1, to=1-2]
	\arrow["{\hat{F}_A}"', shift right=3, from=1-1, to=2-1]
	\arrow["{F_{\hat{A}}}"', shift right=3, from=1-2, to=2-2]
	\arrow["{\hat{U}_A}"', shift right=3, from=2-1, to=1-1]
	\arrow["E"', from=2-1, to=2-2]
	\arrow["\cong", draw=none, from=2-1, to=2-2]
	\arrow["{U_{\hat{A}}}"', shift right=3, from=2-2, to=1-2]
\end{tikzcd}\]
Moreover, for every Serre $\otimes$-ideal $\mc I$ of $\mc A(\mc T)^{\fp}$, there is a commutative diagram 
\[\begin{tikzcd}
	{\mc A(\mc T)} & {\frac{\mc A(\mc T)}{{\Loc(\mc I)}}} \\
	{\mc A(A\text{-}\Mod_{\mc T})} \\
	{\frac{\mc A(A\text{-}\Mod_{\mc T})}{\Loc(\langle \hat{F}_A(\mc I)\rangle)}} & {\bar{A}\text{-} \Mod_{\frac{\mc A(\mc T)}{{\Loc(\mc I)}}} \rlap{ ,}}
	\arrow["{Q_{\mc I}}", two heads, from=1-1, to=1-2]
	\arrow["{\hat{F}_A}"', from=1-1, to=2-1]
	\arrow["G", from=1-2, to=3-1]
	\arrow["{F_{\bar{A}}}"', from=1-2, to=3-2]
	\arrow[two heads, from=2-1, to=3-1]
	\arrow["\cong", from=3-1, to=3-2]
\end{tikzcd}\]
where $\bar{A} = Q_{\mc I}(\hat{A})$ and $G$ is induced by the universal property of $Q_{\mc I}$. 
\end{lemma}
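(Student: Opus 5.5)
The proof has three steps. First we show that $\hat{A}$ is a rigid commutative separable ring. Then we build the comparison $E$ by identifying both module categories with modules over the same small category. Finally we deduce the quotient diagram from \Cref{sep ext in abelian cats}.

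\emph{Step 1: $\hat{A}$ is a rigid commutative separable ring.} Since $h_{\mc T}$ is symmetric monoidal, it carries the multiplication $\mu$, the unit $u$ and the bilinear section $\sigma$ of $A$ to the corresponding structure maps for $\hat{A}$. The associativity, commutativity and bilinearity diagrams are equalities of morphisms, so they are preserved. Hence $\hat{A}$ is a commutative separable ring in $\mc A(\mc T)$. It is rigid because $A$ is compact-rigid and the Yoneda embedding $\mc T^c \hookrightarrow \mc A(\mc T)^{\fp}$ is monoidal, so it sends duals to duals.

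\emph{Step 2: constructing $E$.} By \Cref{sep ext in abelian cats}(a) and the splitting \eqref{splitting}, every $\hat{A}$-module is a summand of a free one. Both categories are locally coherent Grothendieck categories generated by finitely presented projectives:
\begin{itemize}
\item in $\mc A(A\text{-}\Mod_{\mc T})$ these generators are the representables $\widehat{F_A(x)}$ for $x \in \mc T^c$, since every compact $A$-module is a summand of some $F_A(x)$;
\item in $\hat{A}\text{-}\Mod_{\mc A(\mc T)}$ they are the free modules $F_{\hat{A}}(\hat{x}) = \hat{A} \otimes \hat{x}$, which are projective because $F_{\hat{A}}$ is left adjoint to the exact functor $U_{\hat{A}}$.
\end{itemize}
Next we compare morphisms between these generators. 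On one side, the adjunction $F_A \dashv U_A$ gives
\[\Hom(\widehat{F_A x}, \widehat{F_A y}) = \Hom_{\mc T}(x, A \otimes y).\]
On the other side, $F_{\hat{A}} \dashv U_{\hat{A}}$ followed by Yoneda gives
\[\Hom(\hat{A} \otimes \hat{x}, \hat{A} \otimes \hat{y}) = \Hom_{\mc A(\mc T)}(\hat{x}, \hat{A} \otimes \hat{y}) = \Hom_{\mc T}(x, A \otimes y).\]
The two composition laws agree, as both are Kleisli composition for the monad $A \otimes -$. So the full subcategories of free objects on compacts are equivalent, and this equivalence is compatible with $\hat{F}_A$ and $F_{\hat{A}}$. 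Idempotent completion extends it to the finitely presented projectives. Since a locally coherent Grothendieck category with enough such projectives is the module category over them, we obtain an equivalence $E$ with $E \hat{F}_A \cong F_{\hat{A}}$. The relation $U_{\hat{A}} E \cong \hat{U}_A$ then follows by uniqueness of right adjoints. Alternatively, one can compute directly that $\hat{U}_A(M)(x) = M(F_A x)$.

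This identification of $\mc A(A\text{-}\Mod_{\mc T})$ is the main obstacle. The care needed is in making $E$ well-defined and compatible with all structure on generators, then extending it to all of $\mc A(A\text{-}\Mod_{\mc T})$ by colimits.

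\emph{Step 3: the quotient diagram.} Apply \Cref{sep ext in abelian cats}(d) with $F = Q_{\mc I}$. It identifies $\bar{A}\text{-}\Mod_{\mc A(\mc T)/\Loc(\mc I)}$ with the Gabriel quotient of $\hat{A}\text{-}\Mod_{\mc A(\mc T)}$ by $\Loc(U_{\hat{A}}^{-1}(\mc I))$. By part (b), $U_{\hat{A}}^{-1}(\mc I) = \langle F_{\hat{A}}(\mc I) \rangle$. Transporting along $E$, this subcategory becomes $\langle \hat{F}_A(\mc I) \rangle$, which gives the bottom equivalence. Part (c) gives $\tilde{Q}_{\mc I} F_{\hat{A}} \cong F_{\bar{A}} Q_{\mc I}$, so the outer square commutes. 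The functor $G$ exists because the composite $\mc A(\mc T) \to \mc A(A\text{-}\Mod_{\mc T})/\Loc(\langle \hat{F}_A(\mc I) \rangle)$ is exact and kills $\Loc(\mc I)$. Both triangles then commute by the universal property of $Q_{\mc I}$.
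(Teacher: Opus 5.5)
Your proposal is correct. Steps 1 and 3 follow the paper's proof; in particular, Step 3 uses parts (b)--(d) of \Cref{sep ext in abelian cats} just as the paper does. The real difference is in Step 2.

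The paper obtains $E$ in one stroke from the Beck-type monadicity theorem \cite[Theorem 2.9]{restrict}, applied to $\hat{F}_A \dashv \hat{U}_A$. This adjunction satisfies the projection formula, and its counit has a natural section lifted from that of $F_A \dashv U_A$. Hence $\mc A(A\text{-}\Mod_{\mc T})$ is identified with modules over $\hat{U}_A$ of its unit, which is $\hat{A}$, and $E$ is the Eilenberg--Moore comparison functor. So $U_{\hat{A}} E \cong \hat{U}_A$ is built in, and $E \hat{F}_A \cong F_{\hat{A}}$ comes from the projection formula.

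You instead recognise both sides as functor categories over one small category: the Kleisli category of $A \otimes -$ on $\mc T^c$, up to idempotent completion. For this you use only Yoneda, the two free--forgetful adjunctions, and the standard description of a cocomplete abelian category with a generating set of small projectives. Then $E \hat{F}_A \cong F_{\hat{A}}$ holds by construction, and $U_{\hat{A}} E \cong \hat{U}_A$ follows by uniqueness of right adjoints.

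Your route is more elementary and self-contained. It also shows that, in building $E$, separability is used only to know that the $F_A(x)$ with $x \in \mc T^c$ generate $(A\text{-}\Mod_{\mc T})^c$ up to summands. The paper's route is shorter and gives the comparison functor directly, at the cost of an external theorem and of lifting the counit's section to $\mc A(A\text{-}\Mod_{\mc T})$.

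To finish your version, record two things:
\begin{enumerate}
\item the $\hat{A} \otimes \hat{x}$ are finitely presented, since $U_{\hat{A}}$ preserves filtered colimits;
\item $E\hat{F}_A$ and $F_{\hat{A}}$ are colimit-preserving functors out of $\mc A(\mc T) = \Mod\text{-}\mc T^c$, so they are determined by their restriction to representables.
\end{enumerate}

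One small point in Step 3, which the paper also passes over: transporting $\langle F_{\hat{A}}(\mc I) \rangle$ along $E$ tacitly assumes that $E$ preserves Serre $\otimes$-ideals. It is cleaner to transport $U_{\hat{A}}^{-1}(\mc I)$ instead, which becomes $\hat{U}_A^{-1}(\mc I)$ because $U_{\hat{A}} E \cong \hat{U}_A$. Then check $\hat{U}_A^{-1}(\mc I) = \langle \hat{F}_A(\mc I) \rangle$ directly, using the projection formula for $\hat{F}_A \dashv \hat{U}_A$ and the splitting \eqref{splitting} transported along $E$.
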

\begin{proof}
    The fact that $\hat{A}$ is a commutative and separable ring object is immediate since the restricted Yoneda embeding $h_{\mc T} \colon \mc T \to \mc A(\mc T)$ is a monoidal functor. Moreover, as $A$ is compact, hence rigid, so is $\hat{A}$. The existence of the equivalence $E$ follows by an analogue of Beck Monadicity, see \cite[Theorem 2.9]{restrict}: The adjunction $\hat{F}_A \dashv \hat{U}_A$ satisfies the projection formula and the counit has a natural section that is simply lifted from the section of the counit of ${F}_A \dashv {U}_A$.

    For the second part, by \Cref{sep ext in abelian cats}, the quotient $Q_{\mc I}\colon \mc A(\mc T) \to \frac{\mc A(\mc T)}{\Loc(\mc I)}$ induces a functor $\tilde{Q}_{\mc I}$ which can be identified with the Gabriel quotient of $\hat{A}\text{-}\Mod_{\mc A(T)}$ by the subcategory $\Loc(U_{\hat{A}}^{-1}(\mc I)) = \Loc(\langle F_{\hat{A}}(\mc I) \rangle)$. Setting $\bar{A} \vcentcolon= Q_{\mc I}(A)$, altogether this yields a commutative diagram as follows:

\[\begin{tikzcd}
        {\mc A(\mc T)} & {\mc A(\mc T)} & {\frac{\mc A(\mc T)}{{\Loc(\mc I)}}} \\
        {\mc A(A\text{-}\Mod_{\mc T})} & {\hat{A}\text{-}\Mod_{\mc A( \mc T)}} \\
        {\frac{\mc A(A\text{-}\Mod_{\mc T})}{\Loc(\langle \hat{F}_A(\mc I)\rangle)}} & {\frac{\hat{A}\text{-}\Mod_{\mc A( \mc T)}}{\Loc(U_{\hat{A}}^{-1}(\mc I))}} & {\bar{A}\text{-} \Mod_{\frac{\mc A(\mc T)}{{\Loc(\mc I)}}}}
        \arrow[equals, from=1-1, to=1-2]
        \arrow["{\hat{F}_A}"', shift right=3, from=1-1, to=2-1]
        \arrow["Q_{\mc I}", from=1-2, to=1-3]
        \arrow["{F_{\hat{A}}}"', shift right=3, from=1-2, to=2-2]
        \arrow["{F_{\bar{A}}}"', shift right=3, from=1-3, to=3-3]
        \arrow["{\hat{U}_A}"', shift right=3, from=2-1, to=1-1]
        \arrow["E"', from=2-1, to=2-2]
        \arrow["\cong", draw=none, from=2-1, to=2-2]
        \arrow[two heads, from=2-1, to=3-1]
        \arrow["{U_{\hat{A}}}"', shift right=3, from=2-2, to=1-2]
        \arrow[two heads, from=2-2, to=3-2]
        \arrow["{\tilde{Q}_{\mc I}}"{description}, from=2-2, to=3-3]
        \arrow["\cong", from=3-1, to=3-2]
        \arrow["\cong", from=3-2, to=3-3]
        \arrow["{U_{\bar{A}}}"', shift right=3, from=3-3, to=1-3]
        \arrow["G"', from=1-3, to=3-1, rounded corners,
        to path={ -- ([yshift=2ex]\tikztostart.north)
            -| ([xshift=-2ex]\tikztotarget.west)        [near end]\tikztonodes
            |- ([xshift=-2ex]\tikztotarget)}]
    \end{tikzcd}\]
This proves the statement. 
\end{proof}

To prove a converse to \Cref{left to right}, we need a sequence of lemmas. 
\begin{lemma}\label{intersection_of_homological_primes_is_zero}
    Let $F\colon \mc T \to \mc S$ be a finite separable extension of finite degree. For every homological prime $\mc H \in \Spc^{\mathrm{h}}(\mc T^c)$, the intersection of all homological primes in the fibre $(\varphi^h_F)^{-1}\{ \mc H\} \subseteq \Spc^{\mathrm{h}}(\mc S^c)$ is equal to the Serre $\otimes$-ideal $\langle \hat{F}(\mc H) \rangle$ of $\mc A(\mc S)^{\fp}$. 
\end{lemma}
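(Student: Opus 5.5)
We reduce the question to the homological residue field at $\mc H$. By \Cref{compatibilities}, with $\mc I = \mc H$, we get a commutative square in which the Gabriel quotient
\[\mc A(\mc S) \to \mc A(\mc S)/\Loc(\langle \hat F(\mc H)\rangle)\]
is identified with $\bar A\text{-}\Mod_{\mc A(\mc T)_{\mc H}}$, where $\bar A = Q_{\mc H}(\hat A)$. This ring is rigid, commutative and separable in $\mc A(\mc T)_{\mc H}$. Every homological prime $\mc G$ in the fibre $(\varphi^h_F)^{-1}\{\mc H\}$ contains $\hat F(\mc H)$, hence $\langle \hat F(\mc H)\rangle$; this gives the inclusion $\supseteq$. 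Maximal Serre $\otimes$-ideals containing $\langle\hat F(\mc H)\rangle$ correspond to maximal Serre $\otimes$-ideals of $\mc B^{\fp}$, where $\mc B := \bar A\text{-}\Mod_{\mc A(\mc T)_{\mc H}}$. Conversely, such a maximal ideal $\mc G$ pulls back to a proper Serre $\otimes$-ideal of $\mc A(\mc T)^{\fp}$ containing $\mc H$, so it equals $\mc H$ by maximality. Hence the fibre is in bijection with the maximal Serre $\otimes$-ideals of $\mc B^{\fp}$. The statement therefore becomes: \emph{the intersection of all maximal Serre $\otimes$-ideals of $\mc B^{\fp}$ is zero.}

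To prove this, we use the finite degree of $A$ to split $\bar A$ into a finite product of ``indecomposable'' separable rings. The degree is preserved under tensor functors, so $\deg(\bar A) \le \deg_{\mc T}(A) < \infty$. Following the argument of \cite{finitedegree}, any idempotent decomposition of $\bar A$ has length bounded by the degree. We may therefore write
\[\bar A \cong \bar A_1 \times \dots \times \bar A_n,\]
with each $\bar A_i$ having no nontrivial idempotents; this holds at least after repeatedly splitting $\bar A\otimes \bar A \cong \bar A \times \bar A'$ via the separability idempotent, as in Balmer's degree theory. Then $\mc B \simeq \prod_i \bar A_i\text{-}\Mod$, and it suffices to treat each factor. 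A maximal Serre $\otimes$-ideal of a product is the product of a maximal ideal in one factor with everything in the others. So the intersection of all maximal ideals is zero provided each factor $\bar A_i\text{-}\Mod^{\fp}$ has zero intersection of its maximal Serre $\otimes$-ideals.

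For a single factor we aim to show that the only proper Serre $\otimes$-ideal of $\mc B_i^{\fp}$ is $0$, so that $0$ itself is maximal. Let $\mc J\subseteq \mc B_i^{\fp}$ be a Serre $\otimes$-ideal and consider the preimage $F_{\bar A_i}^{-1}(\mc J)$, a Serre $\otimes$-ideal of $\mc A(\mc T)_{\mc H}^{\fp}$. By \Cref{homological spectrum}, this preimage is either $0$ or everything.
\begin{itemize}
\item If it is everything, then $\bar A_i = F(\bar{\unit})\in \mc J$, so $\mc J$ is the whole category.
\item If it is $0$, then the extension $F_{\bar A_i}$ restricted to $\mc B_i/\Loc(\mc J)$ is faithful. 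We then want to use that $\bar A_i$ is indecomposable to force $\mc J = 0$: for $M \in \mc J$, the module $U(M)$ has $F U(M) = \bar A_i\otimes U(M)$. It contains $M$ as a summand by the splitting \eqref{splitting} and lies in $\mc J$ because $\mc J$ is an ideal, so $F(U(M))\in\mc J$, giving $U(M)=0$ and hence $M=0$.
\end{itemize}
The second case seems not to even need indecomposability, in which case the product decomposition is unnecessary and finite degree enters only in ensuring that the bijection and the $\otimes$-structures behave. I would double-check this, and expect the real subtlety to be the correspondence step above: identifying maximal ideals of $\mc B^{\fp}$ with the fibre, and in particular showing that every nonzero object of $\mc B^{\fp}$ survives in some maximal ideal's quotient. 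Existence of maximal ideals avoiding a given nonzero object, via Zorn's lemma, is where finite degree (bounding the number of minimal idempotent pieces) would be used if the naive argument fails. That step is the main obstacle.
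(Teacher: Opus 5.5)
\textbf{Verdict: there is a genuine gap.} It lies in your single-factor argument, not where you expected. Your reduction in the first paragraph matches the paper's and is fine: you identify the fibre with the maximal Serre $\otimes$-ideals of $\mc B^{\fp}$, where $\mc B=\bar A\text{-}\Mod_{\mc A(\mc T)_{\mc H}}$. So the correspondence step you flag as the main obstacle is not the problem.

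\textbf{The failing step.} You claim that $F(U(M))\in\mc J$ ``because $\mc J$ is an ideal''. This does not follow. Being an ideal, $\mc J$ is closed under $-\otimes_{\bar A}N$. In particular it contains $M\otimes_{\bar A}F(Y)\cong M\otimes Y$, with $\bar A$ acting through $M$. That object is not the free module $F(U(M))=\bar A\otimes U(M)$.

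Here is a concrete counterexample. Take $\bar A=\bar{\unit}\times\bar{\unit}$, so that $\mc B\simeq\mc A(\mc T)_{\mc H}\times\mc A(\mc T)_{\mc H}$ with componentwise tensor. Let $\mc J=\mc A(\mc T)_{\mc H}^{\fp}\times 0$. Then $F^{-1}(\mc J)=0$, yet $\mc J\neq 0$, and $F(U(X,0))=(X,X)\notin\mc J$.

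As you noticed, your second case never uses indecomposability. If it were correct, it would prove that $\mc B^{\fp}$ is Serre-simple for every separable $\bar A$, and the example above refutes that. What is true is only that ideals of the form $U^{-1}(\mc I)=\langle F(\mc I)\rangle$ are closed under $FU$ (\Cref{sep ext in abelian cats}(b)); a general $\mc J$ need not be of this form. You also give no argument that an indecomposable $\bar A_i$ has a Serre-simple module category. It is unclear that this holds, since $\mc A(\mc T)_{\mc H}$ is not semisimple. The lemma only asserts that the maximal ideals intersect to zero, not that $0$ is maximal. So the decomposition into $\bar A_i$'s leads nowhere as it stands.

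\textbf{The missing idea: how finite degree is actually used.} The paper does not decompose $\bar A$ inside $\mc A(\mc T)_{\mc H}$. Instead it inducts on $\deg_{\mc T}(A)$ by base-changing to a homological prime $\mc G$ of $\mc S$ lying over $\mc H$. Such a $\mc G$ exists when $\pi_{\mc T}(\mc H)\in\supp_{\mc T}(A)$; otherwise $\bar A=0$ and there is nothing to prove.
\begin{itemize}
\item Over $\mc G$, Balmer's splitting gives $F(A)\cong\unit\times C$ with $\deg_{\mc S}(C)=\deg_{\mc T}(A)-1$. Hence $\bar B\text{-}\Mod_{\mc A(\mc S)_{\mc G}}\simeq\mc A(\mc S)_{\mc G}\times\bar C\text{-}\Mod_{\mc A(\mc S)_{\mc G}}$.
\item The first factor is Serre-simple, and for the second the maximal Serre $\otimes$-ideals intersect to $0$ by induction. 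So the same holds for $\bar B\text{-}\Mod_{\mc A(\mc S)_{\mc G}}$.
\item Via \cite[Proposition 5.5]{bigsupport}, the induced functor $\tilde F\colon\bar A\text{-}\Mod_{\mc A(\mc T)_{\mc H}}\to\bar B\text{-}\Mod_{\mc A(\mc S)_{\mc G}}$ pulls back maximal Serre $\otimes$-ideals to maximal ones. Therefore $\tilde F$ kills the intersection of all maximal ideals of $\mc B^{\fp}$.
\item Since $U_{\bar B}\tilde F\cong\bar F\,U_{\bar A}$, and both $U_{\bar A}$ and $\bar F$ are conservative, that intersection is $0$. Conservativity of $\bar F$ comes from the Serre-simplicity of $\mc A(\mc T)_{\mc H}$ (\Cref{homological spectrum}).
\end{itemize}
The step your proposal lacks is this: detect vanishing after a conservative base change in which the ring acquires a unit factor, then induct on degree.
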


\begin{proof}
    Let $A \in \mc T^c$ be a separable ring of finite degree such that $\mc S \cong A\text{-}\Mod_{\mc T}$ and under which $F$ becomes extension-of-scalars $F_A$. By \Cref{compatibilities}, it follows that for every homological prime $\mc H \in \Spc^{\mathrm{h}}(\mc T^c)$, 
    \begin{equation*}
        \frac{\mc A (\mc S)}{\Loc(\langle \hat{F}(\mc H) \rangle )} \cong \bar{A}\text{-}\Mod_{\mc A(\mc T)_{\mc H}},
    \end{equation*}
    where $\bar{A} \vcentcolon= Q_{\mc H}(\hat{A}) \in \mc A(\mc T)_{\mc H}$.
    Therefore, we may alternatively prove, for every homological prime $\mc H \in \Spc^{\mathrm{h}}(\mc T^c)$, that the intersection of all maximal Serre $\otimes$-ideals of $\left(\bar{A}\text{-}\Mod_{\mc A(\mc T)_{\mc H}} \right)^{\fp}$ is equal to $0$. We will show this by induction on the degree of $A$.  
    
    First we note, regardless of the degree of $A$, if $\pi_{\mc T}(\mc H) \notin \supp_{\mc T}(A)$, then $\bar{A} = 0 \in \mc A(\mc T)_{\mc H}$ and hence $\bar{A}\text{-}\Mod_{\mc A(\mc T)_{\mc H}} =0$. So, in this case, $\bar{A}\text{-}\Mod_{\mc A(\mc T)_{\mc H}} =0$ has no maximal Serre $\otimes$-ideals to speak of, making the statement vacuously true.

    If $\deg_{\mc T}(A) =1$, by \cite[Proposition 4.1]{finitedegree}, there is a decomposition of `big' tt-categories
    \begin{equation*}
        \mc T \cong \mc T_1 \times \mc T_2,
    \end{equation*}
    under which $A$ corresponds to $(\unit, 0) \in \mc T_1 \times \mc T_2$. This means that 
    \begin{equation}\label{bijection}
        \Spc^{\mathrm{h}}(\mc T^c) \cong \Spc^{\mathrm{h}}(\mc T^c_1) \sqcup \Spc^{\mathrm{h}}(\mc T^c_2).
    \end{equation}
    Therefore, if $\mc H \in \Spc^{\mathrm{h}}(\mc T^c )$ is a homological prime with $\pi_{\mc T}(\mc H) \in \supp_{\mc T}(A)$, it must correspond to a homological prime $\mc H_1$ of $\mc T_1$ under the bijection \eqref{bijection}. This implies that 
    \begin{equation*}
      \mc A(\mc T)_{\mc H} \cong \mc A(\mc T_1)_{\mc H_1} \cong \bar{A}\text{-}\Mod_{\mc A(\mc T)_{\mc H}},
    \end{equation*}
    because $\bar{A}$ corresponds to $\bar{\unit}$ under $\mc A(\mc T)_{\mc H} \cong \mc A(\mc T_1)_{\mc H_1}$. By construction, the only maximal Serre $\otimes$-ideal of $\mc A(\mc T)_{\mc H}$ is $(0)$, so the statement follows. 

    Now, suppose that $\deg_{\mc T}(A) > 1$ and $\mc H \in \Spc^{\mathrm{h}}(\mc T^c)$ is a homological prime with $\pi_{\mc T}(\mc H) \in \supp_{\mc T}(A)$. Let $\mc G \in \Spc^{\mathrm{h}}(\mc S^c)$ be a homological prime with $\varphi_F^h(\mc G)=\mc H$ (such a $\mc G$ exists precisely because $\pi_{\mc T}(\mc H) \in \supp_{\mc T}(A)$, see \cite[Theorem 5.12]{bigsupport}). Then $F\colon \mc T \to \mc S$ induces a functor on homological residue fields:
    \begin{equation*}
       \bar{F} \colon \mc A(\mc T)_{\mc H} \to \mc A(\mc S)_{\mc G}. 
    \end{equation*}
    Note that $\bar{F}$ is exact, symmetric monoidal, colimit-preserving and sends finitely presented objects to finitely presented objects. 
    If we set $B \vcentcolon= F(A)$, then $\bar{B}\vcentcolon= Q_{\mc G}(\hat{B})=\bar{F}(\bar{A})$ in~$ \mc A(\mc S)_{\mc G}$ and by \Cref{sep ext in abelian cats}, the functor $\bar{F}$ induces a commutative diagram
    \begin{equation}\label{comm diagram}
\begin{tikzcd}
	{\mc A(\mc T)_{\mc H}} & {\mc A(\mc S)_{\mc G}} \\
	{\bar{A}\text{-}\Mod_{\mc A(\mc T)_{\mc H}}} & {\bar{B}\text{-}\Mod_{\mc A(\mc S)_{\mc G}} \rlap{ .}}
	\arrow["{\bar{F}}", from=1-1, to=1-2]
	\arrow["{F_{\bar{A}}}"', shift right=3, from=1-1, to=2-1]
	\arrow["{U_{\bar{B}}}", shift left=3, from=1-2, to=2-2]
	\arrow["{U_{\bar{A}}}"', shift right=3, from=2-1, to=1-1]
	\arrow["{\tilde{F}}", from=2-1, to=2-2]
	\arrow["{F_{\bar{B}}}", shift left=3, from=2-2, to=1-2]
\end{tikzcd}
    \end{equation}
    Now, by \cite[Theorem 3.6]{finitedegree}, we have $B \cong \unit \times C \in \mc S$ for some separable ring $C \in \mc S^c$ with $\deg_{\mc S}(C) = \deg_{\mc T}(A) -1$. Therefore, we get a decomposition
   \begin{equation*}
       \bar{B}\text{-}\Mod_{\mc A(\mc S)_{\mc G}} \cong \mc A(\mc S)_{\mc G} \times \bar{C}\text{-}\Mod_{\mc A(\mc S)_{\mc G}}. 
   \end{equation*}
   By the induction hypothesis, the intersection of all maximal Serre $\otimes$-ideals of $\left(\bar{C}\text{-}\Mod_{\mc A(\mc S)_{\mc G}}\right)^{\fp}$ is $0$ and thus we conclude the same for $\left(\bar{B}\text{-}\Mod_{\mc A(\mc S)_{\mc G}}\right)^{\fp}$.

   Next, let $I$, respectively $J$, denote the set of all maximal Serre $\otimes$-ideals of $\left(\bar{A}\text{-}\Mod_{\mc A(\mc S)_{\mc G}}\right)^{\fp}$, respectively $\left(\bar{B}\text{-}\Mod_{\mc A(\mc S)_{\mc G}}\right)^{\fp}$. We claim that 
   \begin{equation}\label{inclusion}
       \tilde{F}\left( \bigcap_{\mc E \in I} \mc E \right) \subseteq \bigcap_{\mc E' \in J} \mc E' = 0,
   \end{equation}
   To see this, pick $\mc E' \in J$ and consider the functor $H_{\mc E'}$, defined as the composition
   \begin{equation*}
       \mc T \longrightarrow \bar{A}\text{-}\Mod_{\mc A(\mc T)_{\mc H}} \xlongrightarrow{\tilde{F}} \bar{B}\text{-}\Mod_{\mc A(\mc S)_{\mc G}} \longrightarrow \frac{\bar{B}\text{-}\Mod_{\mc A(\mc S)_{\mc G}}}{\Loc(\mc E')}.
   \end{equation*}
   Then $H_{\mc E'}$ satisfies the conditions of \cite[Proposition 5.5]{bigsupport}. It follows that the intersection $\Ker(\hat{H}_{\mc E'}) \cap \,\mc A(\mc T)^{\fp}$ is a homological prime of $\mc T$ that corresponds exactly to the Serre $\otimes$-ideal 
   \begin{equation*}
       \tilde{F}^{-1}(\mc E') \cap \left(\bar{A}\text{-}\Mod_{\mc A(\mc T)_{\mc H}}\right)^{\fp},
   \end{equation*}
   which is therefore a maximal Serre $\otimes$-ideal of $\left(\bar{A}\text{-}\Mod_{\mc A(\mc S)_{\mc G}}\right)^{\fp}$. This readily implies the inclusion \eqref{inclusion}. 
   
    To finish, by the commutativity of the diagram (\ref{comm diagram}), equality \eqref{inclusion} implies that 
    \begin{equation*}
      \bar{F}U_{\bar{A}}\left( \bigcap_{\mc E\in I} \mc E \right) =  U_{\bar{B}}\tilde{F}\left( \bigcap_{\mc E \in I} \mc E\right) =0 .
    \end{equation*}
   As the functors $\bar{F}$ and $U_{\bar{A}}$ are conservative (see \Cref{homological spectrum} for $\bar{F}$), it follows that
   \begin{equation*}
       \bigcap_{\mc E \in I} \mc E =0, 
   \end{equation*}
   as desired.
\end{proof}

\begin{construction}\label{construction}
    Given finitely many homological primes $\mc H_1, \dots, \mc H_n \in \Spc^{\mathrm{h}}(\mc T^c)$ of a `big' tt-category $\mc T$, we consider the functor 
    \begin{equation*}
     Q_{\mc H_1, \dots, \mc H_n} \vcentcolon= \prod_{i} Q_{\mc H_i}   \colon \mc A(\mc T) \to \prod_{i=1}^n \mc A(\mc T)_{\mc H_i}.
    \end{equation*}
    The kernel of this functor is exactly $ \Loc(\cap_{i=1}^n\mc H_i)$: since $Q_{\mc H_1, \dots, \mc H_n} $ is an exact functor that preserves colimits and finitely presented objects, it follows by \cite[Proposition A.6]{BKSruminations} that
    \begin{multline}\label{intersection and loc commute}
        \Ker(Q_{\mc H_1, \dots, \mc H_n})  = \Loc\left( \Ker(Q_{\mc H_1, \dots, \mc H_n}) \cap \mc A(\mc T)^{\fp} \right) \\   =  \Loc\left( \bigcap_{i=1}^n \Ker(Q_{\mc H_i}) \cap \mc A(\mc T)^{\fp} \right) = \Loc\left( \bigcap_{i=1}^n \mc H_i \right). 
    \end{multline}
    Therefore, $Q_{\mc H_1, \dots, \mc H_n}$ induces a functor $\tilde{Q}_{\mc H_1, \dots, \mc H_n}$ making the following diagram commute (here $L$ denotes the quotient functor)
\[\begin{tikzcd}
	{\mc A(\mc T) } && \\
	{\frac{\mc A(\mc T)}{\Loc(\cap_{i=1}^n \mc H_i)}} && {\prod_{i=1}^n \mc A(\mc T)_{\mc H_i}\rlap{ .}}
	\arrow["L"', two heads, from=1-1, to=2-1]
	\arrow["{Q_{\mc H_1, \dots, \mc H_n}}", from=1-1, to=2-3]
	\arrow["{\tilde{Q}_{\mc H_1, \dots, \mc H_n}}"', from=2-1, to=2-3]
\end{tikzcd}\]
\end{construction}

\begin{lemma}\label{decomposition}
    Let $\mc T$ be a `big' tt-category and $\mc H_1, \dots, \mc H_n \in \Spc^{\mathrm{h}}(\mc T^c)$. Suppose that $\pi_{\mc T}(\mc H_1), \dots, \pi_{\mc T}(\mc H_n)$ are distinct, closed, visible points of $\Spc(\mc T^c)$. Then the functor (\Cref{construction})
    \begin{equation*}
        \tilde{Q}_{\mc H_1, \dots, \mc H_n} \colon \frac{\mc A(\mc T)}{\Loc(\cap_{i=1}^n \mc H_i)} \to \prod_{i=1}^n \mc A(\mc T)_{\mc H_i} .
    \end{equation*}
    is an equivalence. 
\end{lemma}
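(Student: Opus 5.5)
The plan is a Chinese-remainder argument: find idempotent-like objects separating the primes, then split the quotient category as a product. Write $\mc P_i \vcentcolon= \pi_{\mc T}(\mc H_i)$ and $\mc A \vcentcolon= \mc A(\mc T)/\Loc(\cap_i \mc H_i)$.

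First, I use visibility and closedness to construct compact objects. Each $\{\mc P_i\}$ is a closed Thomason subset, so there is $c_i \in \mc T^c$ with $\supp_{\mc T}(c_i) = \{\mc P_i\}$. Since the points are distinct, $c_i \in \mc P_j$ for $j \neq i$, hence $\hat c_i \in \mc H_j$, while $\hat c_i \notin \mc H_i$. I expect to use this family as the separating data.

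Second, I show that $\tilde Q \vcentcolon= \tilde{Q}_{\mc H_1,\dots,\mc H_n}$ is fully faithful. By \eqref{intersection and loc commute}, $\tilde Q$ is exact, colimit-preserving and has kernel zero, hence is faithful. For fullness and essential surjectivity, the cleanest route is to decompose $\mc A$ itself as a product. Let $\mc L_i \subseteq \mc A$ be the localising Serre subcategory generated by the image of $\cap_{j\neq i}\mc H_j$. I would show that $\mc A \simeq \prod_i \mc A/\Loc(\text{image of } \mc H_i)$, and that $\mc A/\Loc(\mc H_i) \simeq \mc A(\mc T)_{\mc H_i}$ because $\Loc(\cap_j \mc H_j) \subseteq \Loc(\mc H_i)$. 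For this it suffices to show two things. The first is that the Serre subcategories $\mc L_i$ are mutually orthogonal, i.e.\ have no nonzero morphisms between them in either direction. The second is that the $\mc L_i$ together generate $\mc A$ and each $\mc L_i$ is equivalent to the $i$-th factor. A Grothendieck category that is generated by pairwise Hom-orthogonal localising Serre subcategories splits as their product.

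Third, I verify these two conditions using the $c_i$. For generation, the thick $\otimes$-ideal generated by all $c_i$ is $\mc T^c$ after localisation near the $\mc P_i$. More precisely, $\hat{\unit}$ should lie, modulo $\cap_j\mc H_j$, in the Serre $\otimes$-ideal generated by the $\hat c_i$, because $\supp(\oplus c_i) \supseteq \{\mc P_1,\dots,\mc P_n\}$ and the relevant cone $\unit \to \dots$ lies in every $\mc P_i$. I would argue this with the triangle $\Gamma \to \unit \to L$ for the Thomason subset $\{\mc P_1,\dots,\mc P_n\}$ together with the tensor-ideal property. For orthogonality, $\hat c_i \otimes \hat c_j$ lies in all the $\mc H_k$ when $i \neq j$, so it is zero in $\mc A$. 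Combined with $\otimes$-ideal closure, this gives that objects in $\langle \hat c_i\rangle$ and $\langle \hat c_j\rangle$ have zero tensor product and zero Hom, using rigidity of the compact $\hat c_i$.

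The main obstacle is the generation step: showing that the Serre $\otimes$-ideals $\langle\hat c_i\rangle$ in $\mc A^{\fp}$ sum to everything and that each recovers $\mc A(\mc T)_{\mc H_i}$. Here I would exploit maximality of $\mc H_i$ in two ways. The ideal $\mc H_i + \langle \hat c_i\rangle$ is strictly bigger than $\mc H_i$, hence equals all of $\mc A(\mc T)^{\fp}$. A Chinese-remainder-type argument on Serre ideals in $\mc A^{\fp}$, using that maximal ideals are pairwise comaximal, then yields $\mc A^{\fp} \simeq \prod_i \mc A(\mc T)^{\fp}/\mc H_i$. Passing to $\Loc$ via \Cref{quotients of abelian cats} gives the claimed equivalence.
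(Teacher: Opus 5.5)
Your preliminary steps are fine: the objects $c_i$ exist, $\tilde{Q}_{\mc H_1, \dots, \mc H_n}$ is faithful, and even the generation step can be made to work. The gap is the step that is supposed to turn this into an equivalence.

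The principle you invoke is false. Being generated by pairwise Hom-orthogonal localising Serre subcategories does not make a Grothendieck category split as their product. The Chinese-remainder heuristic for maximal Serre $\otimes$-ideals fails for the same reason. Comaximality only gives a finite filtration of the unit with subquotients in the various ideals, and nothing forces this filtration to split modulo $\cap_i \mc H_i$.

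For a counterexample, take representations $V_1 \to V_2$ of the quiver $1 \to 2$ with the pointwise tensor product. The subcategories $\{V \mid V_2 = 0\}$ and $\{V \mid V_1 = 0\}$ are localising Serre $\otimes$-ideals. They are Hom-orthogonal in both directions, tensor-orthogonal, and jointly generating. Their finitely presented parts are maximal, comaximal Serre $\otimes$-ideals with zero intersection. Yet the unit $k \xrightarrow{1} k$ is indecomposable, so the category is not the product of the two quotients.

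What is genuinely needed is a decomposition of $\bar{\unit}$ into orthogonal idempotent summands in $\mc A(\mc T)/\Loc(\cap_i \mc H_i)$. Your assertion that each $\mc L_i$ is equivalent to the $i$-th factor presupposes exactly this and is left unproved.

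The paper obtains this splitting from non-compact objects: the Balmer--Favi idempotent triangles $\mb E_i \to \unit \to \mb F_i \to \Sigma \mb E_i$ of the closed Thomason subsets $\{\mc P_i\}$. One shows $Q_{\mc H_j}(\hat{\mb E}_i) \cong \bar{\unit}$ if $i = j$ and $0$ if $i \neq j$.
\begin{itemize}
\item For $i = j$, your $c_i$ enter. Since $\mb F_i \otimes c_i = 0$, the Serre $\otimes$-ideal $\Ker(Q_{\mc H_i}(\hat{\mb F}_i) \otimes -) \cap \mc A(\mc T)_{\mc H_i}^{\fp}$ contains $Q_{\mc H_i}(\hat c_i) \neq 0$. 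By maximality it is everything, so $Q_{\mc H_i}(\hat{\mb F}_i) = 0$.
\item For $i \neq j$, one uses $\mb E_i \otimes \mb E_j \cong \mb E_{\varnothing} = 0$.
\end{itemize}
Consequently the map $\bigoplus_i \hat{\mb E}_i \to \hat{\unit}$ has kernel and cokernel in $\bigcap_j \Loc(\mc H_j) = \Loc(\cap_j \mc H_j)$. It therefore becomes an isomorphism in $\mc A(\mc T)/\Loc(\cap_j \mc H_j)$, which is the missing decomposition of the unit. The functors induced by $\hat{\mb E}_i \otimes -$ then assemble into a quasi-inverse of $\tilde{Q}_{\mc H_1, \dots, \mc H_n}$.

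Your idempotent triangle for the Thomason subset $\{\mc P_1, \dots, \mc P_n\}$ has first term $\bigoplus_i \mb E_i$, by Mayer--Vietoris since the points are disjoint closed Thomason subsets. So the repair is to use its individual summands to split the unit, rather than only to prove generation.
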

\begin{proof}
To show that $\tilde{Q}\vcentcolon= \tilde{Q}_{\mc H_1, \dots, \mc H_n}$ is an equivalence we construct a quasi-inverse for it. For this, set $\mc M_i \vcentcolon= \pi_{\mc T}(\mc H_i) \in \Spc(\mc T^c)$, and consider the idempotent triangles (in the sense of \cite{Balmer-Favi})
\begin{equation}\label{idempotent triangle}
    \mb E_{i} \to \unit \to \mb F_{i} \to \Sigma \mb E_{i},
\end{equation}
corresponding to the closed Thomason subsets $\{ \mc M_i\} \subseteq \Spc(\mc T^c)$. Recall that $$\Ker(\mb F_{i} \otimes -) = \Loc\left(\left\{ a \in \mc T^c \mid \supp_{\mc T}(a) \subseteq \{ \mc M_i\}\right\}\right) \subseteq \mc T.$$
We claim that 
\begin{equation}\label{orthogonal idempotents}
   Q_{\mc H_j} (\hat{\mb E}_i)=\begin{cases}
			\bar{\unit}, &  i = j \\
            0, & i \neq j
		 \end{cases}
\end{equation}
When $i = j$, it suffices to prove that $Q_{\mc H_i}(\hat{\mb F}_i) = 0$ because of the idempotent triangle~\eqref{idempotent triangle}. As $Q_{\mc H_i}(\hat{\mb F}_i)$ is flat in $\mc A(\mc T)_{\mc H_i}$, we have that 
\begin{equation*}
   \mc I_i = \Ker(Q_{\mc H_i}(\hat{\mb F}_i) \otimes -) \cap \mc A(\mc T)_{\mc H_i}^{\fp},
\end{equation*}
is a Serre $\otimes$-ideal of $\mc A(\mc T)_{\mc H_i}^{\fp}$. Therefore $\mc I_i$ must equal $(0)$ or $\mc A(\mc T)_{\mc H_i}^{\fp}$ itself. Since $\mc M_i$ is a visible point, there exists an object $a_i \in \mc T^c$ with $\supp_{\mc T}(a_i) = \{ \mc M_i\}$. 
By definition of the idempotent $\mb F_{i}$, it follows that $\mb F_i \otimes a_i =0$ and so $ 0 \neq Q_{\mc H_i}(\hat{a}_i) \in \mc I_i$. Thus, we find that $\bar{\unit} \in \mc A(\mc T)_{\mc H_i}^{\fp}= \mc I_i  $ and $Q_{\mc H_i}(\hat{\mb F}_i) = Q_{\mc H_i}(\hat{\mb F}_i) \otimes \bar{\unit} = 0$ as desired. 
When $i \neq j$, we note that by \cite[Theorem 5.18]{Balmer-Favi}
\begin{equation*}
    \mb E_i \otimes \mb E_j \cong \mb E_{\{\mc M_i\} \cap \{\mc M_j\} } = \mb E_{\varnothing} = 0, 
\end{equation*}
since the $\mc M_i$ are distinct by assumption. It follows that 
\begin{equation*}
    Q_{\mc H_j} (\hat{\mb E}_i) = Q_{\mc H_j} (\hat{\mb E}_i) \otimes \bar{\unit} = Q_{\mc H_j} (\hat{\mb E}_i) \otimes Q_{\mc H_j} (\hat{\mb E}_j) =Q_{\mc H_j} (\hat{\mb E}_i \otimes \hat{\mb E}_j)  = 0. 
\end{equation*}

Now, fix $i \in \{1, \dots, n \}$. It follows from equality \eqref{orthogonal idempotents} that, for every $j \neq i$,
\begin{equation*}
    \hat{\mb E}_i \otimes \Loc(\mc H_i) \subseteq \Loc(\mc H_j). 
\end{equation*}
Since we certainly also have 
\begin{equation*}
    \hat{\mb E}_i \otimes \Loc(\mc H_i) \subseteq \Loc(\mc H_i), 
\end{equation*}
it follows that, 
\begin{equation*}
    \hat{\mb E}_i \otimes \Loc(\mc H_i) \subseteq \bigcap_{j=1}^n \Loc(\mc H_j) \overset{\eqref{intersection and loc commute}}{=} \Loc\left(\bigcap_{j=1}^n \mc H_j\right). 
\end{equation*}
Therefore, by the universal property of the Gabriel quotient, we get a commutative diagram
\[\begin{tikzcd}
	{\mc A(\mc T)} & {\mc A(\mc T)} \\
	{\mc A(\mc T)_{\mc H_i}} & {\frac{\mc A(\mc T)}{\Loc(\cap_{i=1}^n \mc H_i)} \rlap{ .}}
	\arrow["{G_i \vcentcolon= \hat{\mb E}_i \otimes -}", from=1-1, to=1-2]
	\arrow["{Q_{\mc H_i}}"', two heads, from=1-1, to=2-1]
	\arrow["L", two heads, from=1-2, to=2-2]
	\arrow["{\exists \tilde{G}_i}"', from=2-1, to=2-2]
\end{tikzcd}\]
Now, observe that, by \eqref{orthogonal idempotents}, 
\begin{equation*}
    Q_{\mc H_1, \dots, \mc H_n}\left(\bigoplus_{i=1}^n \hat{\mb E}_i\right) \cong \big( \bar{\unit} \big)_{j=1}^n \in \prod_{j=1}^n \mc A(\mc T)_{\mc H_j},
\end{equation*}
Since $\Ker(Q_{\mc H_1, \dots, \mc H_n}) = \Ker(L)$ by \eqref{intersection and loc commute}, it also follows that
\begin{equation*}
    L\left(\bigoplus_{i=1}^n \hat{\mb E}_i\right) \cong \bar{\unit} \in \frac{\mc A(\mc T)}{\Loc(\cap_{i=1}^n \mc H_i)}.
\end{equation*}
Using this, it is straightforward to verify that 
\begin{equation*}
   \bigoplus_{i=1}^n \tilde{G}_i \colon  \prod_{i=1}^n \mc A(\mc T)_{\mc H_i} \to \frac{\mc A(\mc T)}{\Loc(\cap_{i=1}^n \mc H_i)}
\end{equation*}
is the sought-after quasi-inverse for $\tilde{Q}$. 
\end{proof}

\begin{lemma}\label{cor}
    Let $F\colon \mc T \to \mc S$ be a finite separable extension of finite degree and let $\mc G_1, \dots, \mc G_n \in \Spc^{\mathrm{h}}(\mc S^c)$. Suppose that $\pi_{\mc S}(\mc G_1), \dots, \pi_{\mc S}(\mc G_1)$ are distinct, visible points of $\Spc(\mc S^c)$ that all lie in the same fibre of $\varphi_{F}$. Then the functor (\Cref{construction})
    \begin{equation*}
       \tilde{Q}_{\mc G_1, \dots, \mc G_n} \colon \frac{\mc A(\mc S)}{\Loc(\cap_{i=1}^n \mc G_i)} \to \prod_{i=1}^n \mc A(\mc S)_{\mc G_i} 
    \end{equation*}
    is an equivalence. 
\end{lemma}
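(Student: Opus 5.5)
The idea is to reduce \Cref{cor} to \Cref{decomposition} by passing to a quotient of $\mc S$ in which the points $\mc Q_i \vcentcolon= \pi_{\mc S}(\mc G_i)$ become closed. Since the $\mc Q_i$ all lie in one fibre of $\varphi_F$, set $\mc P \vcentcolon= \varphi_F(\mc Q_i)$ and, as in the proof of \Cref{left to right}, consider the Verdier localisation $L\colon \mc S \to \mc S' \vcentcolon= \mc S/\Loc(\langle F(\mc P)\rangle)$. Each $\mc Q_i$ contains $\langle F(\mc P)\rangle$, so it corresponds to a point $\tilde{\mc Q}_i$ of $\Spc(\mc S'^c)$. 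By \Cref{homological_spectrum_of_ttquotients}, each $\mc G_i$ corresponds to a homological prime $\mc G_i'$ of $\mc S'$, and the residue fields are identified: $\mc A(\mc S)_{\mc G_i} \cong \mc A(\mc S')_{\mc G'_i}$.

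The key topological input is that the fibre $\varphi_F^{-1}(\{\mc P\})$ is discrete for a finite separable extension of finite degree. There are no proper inclusions between primes in a fibre; this is the tt-analogue of incomparability for étale or finite extensions, and I would cite it from \cite{finitedegree} or Balmer's work on separable extensions. Granting this, the argument in the proof of \Cref{left to right} shows each $\tilde{\mc Q}_i$ is closed in $\Spc(\mc S'^c)$. The same argument, using visibility of $\mc Q_i$, shows it is visible. The points $\tilde{\mc Q}_i$ are distinct because the $\mc Q_i$ are. Hence \Cref{decomposition} applies to $\mc S'$ and $\mc G'_1,\dots,\mc G'_n$, giving an equivalence
\[
\frac{\mc A(\mc S')}{\Loc(\cap_i \mc G_i')} \xrightarrow{\ \sim\ } \prod_i \mc A(\mc S')_{\mc G_i'} .
\]

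It remains to identify the left-hand side with $\mc A(\mc S)/\Loc(\cap_i \mc G_i)$, compatibly with the functors to the product. By \Cref{homological_spectrum_of_ttquotients}, $\hat L$ is the Gabriel quotient of $\mc A(\mc S)$ by $\Loc(\mc J)$, where $\mc J \vcentcolon= \langle h(\langle F(\mc P)\rangle)\rangle$. Each $\mc G_i$ is the preimage of $\mc G'_i$ and contains $\mc J$. Hence $\cap_i \mc G_i$ is the preimage of $\cap_i \mc G_i'$ and contains $\mc J$. Iterated Gabriel quotients then give
\[
\frac{\mc A(\mc S)}{\Loc(\cap_i\mc G_i)} \cong \frac{\mc A(\mc S')}{\Loc(\cap_i\mc G'_i)} .
\]
This uses the equality of kernels $\Loc(\cap_i \mc G_i)=\Ker(Q_{\mc G_1,\dots,\mc G_n})$ from \eqref{intersection and loc commute} on both sides. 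It also uses the fact that the equivalences $\mc A(\mc S)_{\mc G_i}\cong \mc A(\mc S')_{\mc G'_i}$ commute with the quotient functors, as in the diagram of \Cref{homological_spectrum_of_ttquotients}. Composing these identifications shows that $\tilde Q_{\mc G_1,\dots,\mc G_n}$ is an equivalence.

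The main obstacle is the discreteness of fibres, which is what makes the $\tilde{\mc Q}_i$ closed. If no clean citation is available, I would argue via the degree, inducting as in \Cref{intersection_of_homological_primes_is_zero} using $F(A)\cong \unit\times C$. Alternatively, I would use that in a finite étale-type extension, primes $\mc Q'\subsetneq \mc Q$ with the same image are excluded by a going-down/incomparability argument for $\varphi_F$ on supports of $A$.
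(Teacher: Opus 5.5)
Your proposal is correct and follows the paper's route. You pass to $\mc S' = \mc S/\Loc(\langle F(\mc P)\rangle)$, use that the fibres of $\varphi_F$ are discrete to make the images of the $\mc G_i'$ distinct closed visible points, and then apply \Cref{decomposition}; you also justify the identification of $\mc A(\mc S)/\Loc(\cap_i \mc G_i)$ with $\mc A(\mc S')/\Loc(\cap_i \mc G_i')$ more carefully than the paper does. The discreteness of fibres you need is exactly \cite[Theorem 3.10]{BalmerQuillenstrat}, which is what the paper cites, so the fallback arguments in your last paragraph are unnecessary.
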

\begin{proof}
    Set $\mc Q_i \vcentcolon = \pi_{\mc S}(\mc G_i) \in \Spc(\mc S^c)$ for $ i =1, \dots, n$, and let $\mc P \in \Spc(\mc T^c)$ denote the point such that for all $i$
    \begin{equation*}
        \varphi_{F}(\mc Q_i) = \mc P. 
    \end{equation*}
    Consider the `big' tt-category 
    \begin{equation*}
        \mc S' = \frac{\mc S}{\Loc(\langle F(\mc P \rangle)},
    \end{equation*}
    where $\langle F(\mc P \rangle)$ is the thick $\otimes$-ideal of $\mc S^c$ generated by $F(\mc P)$.
    Since $F(\mc P) \subseteq \mc Q_i$ for all $i$, the homological primes $\mc G_1, \dots, \mc G_n$ of $\mc S$ correspond to homological primes $\mc G_1', \dots, \mc G_n'$ of $\mc S'$, see \Cref{homological_spectrum_of_ttquotients}. Moreover,  this correspondence is compatible with the homological residue fields in such a way that we get a commutative diagram 
\[\begin{tikzcd}
	{\frac{\mc A(\mc S)}{\Loc(\cap_{i=1}^n \mc G_i)}} && {\prod_{i=1}^n \mc A(\mc S)_{\mc G_i}} \\
	{\frac{\mc A(\mc S')}{\Loc(\cap_{i=1}^n \mc G_i')}} && {\prod_{i=1}^n \mc A(\mc S')_{\mc G_i'} \rlap{ .}}
	\arrow["{\tilde{Q}_{\mc G_1, \dots, \mc G_n}}", from=1-1, to=1-3]
	\arrow["\cong"', from=1-1, to=2-1]
	\arrow["\cong", from=1-3, to=2-3]
	\arrow["{\tilde{Q}_{\mc G_1', \dots, \mc G_n'}}", from=2-1, to=2-3]
\end{tikzcd}\]
    So, it suffices to show that $\tilde{Q}_{\mc G_1', \dots, \mc G_n'}$ is an equivalence. 
    
    Since the $\pi_{\mc S}(\mc G_i)$ are all distinct and visible, the same is true for the $\pi_{\mc S'}(\mc G_i')$. We claim that the points $\pi_{\mc S'}(\mc G_i')$ are closed. To see this, first note that the fibres of $\varphi_F$ are discrete by \cite[Theorem 3.10]{BalmerQuillenstrat}, since $F$ is a finite separable extension of finite degree. Now suppose there exists a point $\mc Q \in \Spc(\mc S^c)$ with 
    \begin{equation*}
        F(\mc P) \subseteq \mc Q \subseteq \pi_{\mc S}(\mc G_i). 
    \end{equation*}
    This implies that
    \begin{equation*}
        \mc P \subseteq \varphi_F(Q) \subseteq \varphi_F\left( \pi_{\mc S}(\mc G_i)\right) = \mc P,
    \end{equation*}
    i.e.\@ $\mc Q$ lies in the fibre over $\mc P$. Therefore, we must have $\mc Q = \pi_{\mc S}(\mc G_i)$ by discreteness of the fibre. In particular, this means that the points $\pi_{\mc S'}(\mc G_i')$ are all closed.

    So, the $\pi_{\mc S'}(\mc G_1'), \dots, \pi_{\mc S'}(\mc G_1')$ are distinct, closed, visible points. Therefore, \Cref{decomposition} applies and we conclude that $\tilde{Q}_{\mc G_1', \dots, \mc G_n'}$ is an equivalence.     
\end{proof}

\begin{proposition}\label{right to left}
    Let $F\colon \mc T \to \mc S$ be a finite separable extension of finite degree and let $\mc P \in \Spc(\mc T^c)$ with $\mc P \in \im(\varphi_{F})$. Suppose that for every $\mc Q \in \varphi_{F}^{-1}(\{\mc P\})$ the set $\pi_{\mc S}^{-1}(\{\mc Q\})$ is a singleton and  that $\mc Q$ is a visible point of $\Spc(\mc S^c)$. 
    If $\mc T$ is residually regular at $ \mc P$, then $\mc S$ is residually regular at $\mc Q$ for all $\mc Q \in \varphi_{F}^{-1}(\{\mc P\})$.
\end{proposition}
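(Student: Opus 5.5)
The plan is to show that, after localising at $\mc P$, each residue object $\kappa(\mc Q)$ is a direct summand of $F(\kappa(\mc P))$. Since $F$ preserves compacts, this will give compactness.

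\textbf{Step 1: reduce to the local situation.} Write $\mc S' \vcentcolon= \mc S/\Loc(\langle F(\mc P)\rangle)$. As in the proof of \Cref{left to right}, $F$ induces a geometric functor $\tilde F\colon \mc T_{\mc P} \to \mc S'$. This is again extension of scalars along the image of $A$, so it is a finite separable extension of finite degree. For each $\mc Q$ in the fibre we have $\langle F(\mc P)\rangle \subseteq \mc Q$. Hence $\mc S_{\mc Q}$ is a further Verdier localisation of $\mc S'$, and that localisation preserves compact objects. It therefore suffices to show that $\kappa(\mc G)$ is compact in $\mc S'$, where $\mc G$ is the unique homological prime over $\mc Q$. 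By \Cref{homological_spectrum_of_ttquotients}, residue objects are compatible with these localisations.

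\textbf{Step 2: set up the primes.} By \cite[Theorem 3.10]{BalmerQuillenstrat} and finite degree, the fibre $\varphi_F^{-1}(\{\mc P\})$ is discrete, and I expect it to be finite, with at most $\deg(A)$ points. Call its points $\mc Q_1,\dots,\mc Q_n$ and let $\mc G_i$ be the unique homological prime over $\mc Q_i$. By \Cref{residual regularity implies nerves of steel}, $\mc H$ is the unique homological prime over $\mc P$, and $\kappa(\mc H)$ is compact in $\mc T_{\mc P}$.

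The commutative square of \Cref{functoriality_of_the_homological_spectrum} then forces $\varphi^h_F(\mc G_i)=\mc H$. Conversely, every homological prime in $(\varphi^h_F)^{-1}\{\mc H\}$ lies over some $\mc Q_i$, so it is one of the $\mc G_i$. Combining \Cref{intersection_of_homological_primes_is_zero} with \Cref{cor} gives equivalences
\begin{equation*}
\frac{\mc A(\mc S)}{\Loc(\langle \hat F(\mc H)\rangle)} \;\cong\; \frac{\mc A(\mc S)}{\Loc(\cap_i \mc G_i)} \;\cong\; \prod_{i=1}^n \mc A(\mc S)_{\mc G_i}.
\end{equation*}
By \Cref{compatibilities}, the left-hand side is also identified with $\bar A\text{-}\Mod_{\mc A(\mc T)_{\mc H}}$.

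\textbf{Step 3: show that $F(\kappa(\mc H))$ contains each residue object.} Under these identifications, the image of $F(\kappa(\mc H))$ is $F_{\bar A}(\overbar{\kappa(\mc H)})$. By \Cref{sep ext in abelian cats}(a), $F_{\bar A}$ preserves injectives, so this image is injective. The map $F(\eta_{\mc H})\colon \unit \to F(\kappa(\mc H))$ becomes the monomorphism $F_{\bar A}(\bar\eta_{\mc H})$ out of the unit. Projecting to each factor $\mc A(\mc S)_{\mc G_i}$ gives an injective object receiving a monomorphism from $\bar\unit$. Therefore the injective hull $\overbar{\kappa(\mc G_i)}$ of $\bar\unit$ splits off as a direct summand.

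\textbf{Step 4 (main obstacle): lift the splitting back to $\mc S$.} I need an idempotent of $F(\kappa(\mc H))$ in $\mc S$ whose image is $\kappa(\mc G_i)$. For this I would show that $F(\kappa(\mc H)) = A\otimes \kappa(\mc H)$ is pure-injective and that its Yoneda image is local for $\Loc(\langle \hat F(\mc H)\rangle)$. The argument would use that $\hat{\kappa(\mc H)} \cong R_{\mc H}(\overbar{\kappa(\mc H)})$, that $U_{\hat A}$ and $F_{\hat A}$ are ambidextrous adjoints, and that both commute with the quotients as in \Cref{sep ext in abelian cats}(d). Once this is established, the isomorphism \eqref{injective iso}, applied to the quotient by $\Loc(\cap_i\mc G_i)$, identifies endomorphisms of $F(\kappa(\mc H))$ in $\mc S$ with endomorphisms of its image. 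The idempotent splitting in $\prod_i \mc A(\mc S)_{\mc G_i}$ then lifts to a decomposition in $\mc S$. The summand so obtained is pure-injective and has image the injective hull of $\bar\unit$ at $\mc G_i$ (and zero at the other factors). By uniqueness in \Cref{making the residue objects}, this summand is $\kappa(\mc G_i)$.

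This is the step where I expect the real work to lie. If the direct route stalls, I would instead work entirely inside $\mc S'$ and $\mc T_{\mc P}$, where the relevant objects are already local.

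\textbf{Step 5: conclude.} Since $\kappa(\mc H)$ is compact in $\mc T_{\mc P}$ and $\tilde F$ preserves compacts, $\tilde F(\kappa(\mc H))$ is compact in $\mc S'$. Its direct summand $\kappa(\mc G_i)$ is therefore compact in $\mc S'$, and hence compact in $\mc S_{\mc Q_i}$ by Step 1.
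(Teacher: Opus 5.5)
Your proposal is correct and follows the paper's proof. The paper likewise identifies $\mc A(\mc S)/\Loc(\langle \hat{F}(\mc H)\rangle)$ with both $\bar{A}\text{-}\Mod_{\mc A(\mc T)_{\mc H}}$ and $\prod_i \mc A(\mc S)_{\mc G_i}$ via \Cref{compatibilities,intersection_of_homological_primes_is_zero,cor}, uses that extension of scalars preserves injectives to split each $\overline{\kappa(\mc G_i)}$ off $\overline{F(\kappa(\mc H))}$, and concludes from the compactness of $\kappa(\mc H)$ in $\mc T_{\mc P}$. Your Step~4 correctly spells out, via the $\Loc(\langle \hat{F}(\mc H)\rangle)$-closedness of $\widehat{F(\kappa(\mc H))}$, the lifting of the splitting to $\mc S$ that the paper compresses into the phrase ``by Krause's isomorphisms''.
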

\begin{proof}
Let $\mc H \in \Spc^{\mathrm{h}}(\mc T^c)$ be the unique homological prime lying over $\mc P$ (see \Cref{residual regularity implies nerves of steel}). Since the extension is of finite degree, the fibre of $\varphi_{F}$ over $\mc P$ is finite by \cite[Theorem 3.10]{BalmerQuillenstrat}, say $$ \varphi_{F}^{-1}(\{\mc P\}) = \{ \mc Q_1, \dots, \mc Q_d\}.$$ 
By assumption, there is exactly one homological prime $\mc G_i \in \Spc^{\mathrm{h}}(\mc S^c)$ lying over $\mc Q_i$ for every $i \in \{ 1, \dots, d\}$. Note that this implies that the fibre of $\varphi_{F}^h$ over $\mc H$ is exactly
\begin{equation*}
   (\varphi_{F}^h)^{-1}(\{\mc H\})= \{\mc G_1, \dots, \mc G_d \}.
\end{equation*}
Therefore, for every $i \in \{1, \dots, d\}$, the functor $F$ induces an exact symmetric monoidal functor on homological residue fields
\begin{equation*}
    \bar{F}_i \colon \mc A(\mc T)_{\mc H} \to \mc A(\mc S)_{\mc G_i}.
\end{equation*}
Bundling these functors together, we obtain a functor 
\begin{equation*}
 \bar{F} \vcentcolon= \prod_{i=1}^d \bar{F}_i \colon \mc A(\mc T)_{\mc H} \to  \prod_{i=1}^d \mc A(\mc S)_{\mc G_i},
\end{equation*}
which fits into the following commutative diagram (see \Cref{construction}):
\[\begin{tikzcd}
	{\mc A(\mc T)} && {\mc A(\mc T)_{\mc H}} \\
	{\mc A(\mc S)} && {\prod_{i=1}^d \mc A(\mc S)_{\mc G_i} \rlap{ .}}
	\arrow["{Q_{\mc H}}", two heads, from=1-1, to=1-3]
	\arrow["{\hat{F}}"', from=1-1, to=2-1]
	\arrow["{\bar{F}}", from=1-3, to=2-3]
	\arrow["{Q_{\mc G_1, \dots, \mc G_d}}"', from=2-1, to=2-3]
\end{tikzcd}\]
We claim that $\bar{F}$ preserves injectives. To see this, note that \Cref{intersection_of_homological_primes_is_zero,cor} imply that the above commutative diagram can be extended as follows 
\[\begin{tikzcd}
	{\mc A(\mc T)} & {\mc A(\mc T)_{\mc H}} \\
	{\mc A(\mc S)} & {\prod_{i=1}^d \mc A(\mc S)_{\mc G_i}} \\
	{\frac{\mc A(\mc S)}{\Loc(\langle \hat{F}(\mc H)\rangle)}} & {\frac{\mc A(\mc S)}{\Loc(\cap_{i=1}^d \mc G_i)} \rlap{ ,}}
	\arrow["{Q_{\mc H}}", two heads, from=1-1, to=1-2]
	\arrow["{\hat{F}}"', from=1-1, to=2-1]
	\arrow["{\bar{F}}", from=1-2, to=2-2]
	\arrow["{Q_{\mc G_1, \dots, \mc G_d}}"', from=2-1, to=2-2]
	\arrow[two heads, from=2-1, to=3-1]
	\arrow[equals, from=3-1, to=3-2]
	\arrow["\cong", from=3-2, to=2-2]
    \arrow["G"', from=1-2, to=3-1, rounded corners,
        to path={ -- ([yshift=2ex]\tikztostart.north)
            -| ([xshift=-2ex]\tikztotarget.west)        [near end]\tikztonodes
            |- ([xshift=-2ex]\tikztotarget)}]
\end{tikzcd}\]
where the functor $G$ is induced by the universal property of $Q_{\mc H}$. By \Cref{compatibilities}, the functor $G$ identifies with an extension-of-scalars functor with respect to a rigid and separable ring, and thus $G$ preserves injectives by \Cref{sep ext in abelian cats}. Therefore, $\bar{F}$ must preserve injectives as well.  

Now, consider the injective hull $\bar{\eta}=\bar{\eta}_{\mc H} \colon \bar{\unit} \to \overbar{\kappa(\mc H)}$ in $\mc A(\mc T)_{\mc H}$. Then $\bar{F}(\bar{\eta})$ is a monomorphism of the unit of $\prod_{i}\mc A(\mc S)_{\mc G_i}$ to the injective object $\bar{F}\left(\overbar{\kappa(\mc H)}\right)$. 
Therefore, the injective hull of the unit in $\prod_{i}\mc A(\mc S)_{\mc G_i}$, which is exactly
\begin{equation*}
    \left(\overbar{\kappa(\mc G_i)}\right)_{i=1}^d \in \prod_{i=1}^d \mc A(\mc S)_{\mc G_i},
\end{equation*}
is a direct summand of $\bar{F}(\overbar{\kappa(\mc H)}) = \overbar{F(\kappa(\mc H))}$. In other words, $\overbar{\kappa(\mc G_i)}$ is a direct summand of $\overbar{F(\kappa(\mc H))}$ for every $i$. This decomposition already holds in $\mc S$, by Krause's isomorphisms~\eqref{injective iso}. For every $i$, the functor $F$ induces the following commutative diagram of geometric tt-functors:
\[\begin{tikzcd}
	{\mc T} & {\mc T_{\mc P}} \\
	{\mc S} & {\mc S_{\mc Q_i} \rlap{ .}}
	\arrow[two heads, from=1-1, to=1-2]
	\arrow["F"', from=1-1, to=2-1]
	\arrow["{\tilde{F}_i}", from=1-2, to=2-2]
	\arrow[two heads, from=2-1, to=2-2]
\end{tikzcd}\]
It follows that $\kappa(\mc G_i)$ is a direct summand of $\tilde{F}_i(\kappa(\mc H))$ in $\mc S_{\mc Q_i}$. Since $\mc T$ is residually regular at $\mc P$ by assumption, we have that $\kappa(\mc H)$ is compact in the stalk category $\mc T_{\mc P}$ and thus $\tilde{F}_i(\kappa(\mc H))$ is compact in $\mc S_{\mc Q_i}$. So $\kappa(\mc G_i)$ is compact in $\mc S_{\mc Q_i}$ and as such $\mc S$ is residually regular at $\mc Q_i$ for all $i$. 
\end{proof}

\begin{corollary}\label{the big one}
    Let $F\colon \mc T \to \mc S$ be a finite separable extension of finite degree and let $\mc Q \in \Spc(\mc S^c)$. Suppose that $\mc S$ satisfies the steel condition and that $\Spc(\mc S^c)$ is Noetherian. Then $\mc S$ is residually regular at $\mc Q$ if and only if $\mc T$ is residually regular at $ \mc P \vcentcolon= \varphi_{F}(\mc Q)$. 
\end{corollary}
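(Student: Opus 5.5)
The plan is to obtain both implications directly from \Cref{left to right} and \Cref{right to left}. The only work is to check that the hypotheses of those two propositions follow from the assumptions here: $F$ is a finite separable extension of finite degree, $\mc S$ satisfies the steel condition, and $\Spc(\mc S^c)$ is Noetherian. Throughout, identify $F$ with $F_A\colon \mc T \to A\text{-}\Mod_{\mc T}$ for a compact separable ring $A$ of finite degree, and write $U = U_A$.

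First I record three general facts.
\begin{enumerate}[label=(\roman*)]
\item Since $\Spc(\mc S^c)$ is Noetherian, every point of it is visible by \Cref{visibiliy noeth}.
\item By \cite[Theorem 3.10]{BalmerQuillenstrat}, the fibres of $\varphi_F$ are finite and discrete. In particular, every point of a fibre is closed in that fibre.
\item The right adjoint $U_A$ preserves compact objects, because $U_A \dashv F_A$ and $F_A$ preserves coproducts. This was recalled just before the definition of finite separable extensions.
\end{enumerate}

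For the direction ``$\mc S$ residually regular at $\mc Q$ implies $\mc T$ residually regular at $\mc P$'', I apply \Cref{left to right} to $F$ and $\mc Q$. Its hypotheses are supplied as follows.
\begin{enumerate}[label=(\arabic*)]
\item The point $\mc Q$ is visible by (i).
\item The point $\mc Q$ is closed in $\varphi_F^{-1}(\{\mc P\})$ by (ii).
\item The right adjoint $U$ preserves compact objects by (iii).
\end{enumerate}
That proposition then yields residual regularity of $\mc T$ at $\mc P = \varphi_F(\mc Q)$. Its proof uses the unique homological prime lying over $\mc Q$. This uniqueness is guaranteed both by the steel condition and by \Cref{residual regularity implies nerves of steel}.

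For the converse direction, I apply \Cref{right to left} to $\mc P = \varphi_F(\mc Q)$, which lies in $\im(\varphi_F)$ by construction. Take any $\mc Q' \in \varphi_F^{-1}(\{\mc P\})$. The set $\pi_{\mc S}^{-1}(\{\mc Q'\})$ is a singleton because $\pi_{\mc S}$ is a bijection, and $\mc Q'$ is visible by (i). So if $\mc T$ is residually regular at $\mc P$, \Cref{right to left} makes $\mc S$ residually regular at every point of the fibre, and in particular at $\mc Q$.

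There is no genuine obstacle here, since the substantive work is in the two propositions already proved. The only point needing care is the closedness of $\mc Q$ in its fibre in the first direction. Here I rely on the discreteness of the fibres from \cite[Theorem 3.10]{BalmerQuillenstrat}, which is exactly where the finite degree hypothesis enters, in the same way it was used in the proof of \Cref{cor}.
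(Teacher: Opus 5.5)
Your proposal is correct and follows the paper's proof. The paper also gets the forward direction from \Cref{left to right}, using visibility of points (Noetherianity), closedness of $\mc Q$ in its fibre (discreteness of fibres via \cite[Theorem 3.10]{BalmerQuillenstrat}) and the fact that $U$ preserves compacts, and it gets the converse from \Cref{right to left} via the steel condition and Noetherianity.
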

\begin{proof}
    The forward direction is taken care of by \Cref{left to right}. After all, as $F$ is a finite separable extension, its right adjoint $U$ preserves compacts. Moreover, since the separable extension is of finite degree, the fibres of $\varphi_F$ are discrete by \cite[Theorem 3.10]{BalmerQuillenstrat}. In particular, every point of $\Spc(\mc S^c)$ is closed in its fibre. Finally, all points are visible as $\Spc(\mc S^c)$ is assumed to be Noetherian.
    
    For the converse, we apply \Cref{right to left} (all conditions are readily satisfied by the assumption that $\mc S$ satisfies the steel condition and $\Spc(\mc S^c)$ is Noetherian). 
\end{proof}

\begin{remark}
    During the writing of this paper, two other results \cite{regandsepmonads, singcatsepext} regarding separable extensions were posted on the arxiv. In \cite{regandsepmonads}, a similar descent-ascent statement is proved for regularity of $k$-linear triangulated categories\footnote{Note that their definition of regularity implies that the compact objects are $\Hom$-finite over $k$.} as proposed in \cite{kostas}, where $k$ is a Noetherian commutative ring; cf.\@ \Cref{inthelit}. They prove that this notion of regularity descends and ascends via extensions along certain `nice' separable monads on the triangulated category. In \cite{singcatsepext}, it is shown that a finite étale morphism of schemes gives rise to a finite separable extension at the level of the singularity categories. From this one can deduce the usual algebro-geometric descent-ascent for finite étale morphisms. 
    We remark that both these results do not subsume ours, nor vice versa. 
\end{remark}

\section{Permutation modules}\label{mod rep sec}
In this section, we apply the above results in modular representation theory. Specifically, we consider the residual regularity of the derived category of permutation $kG$-modules, as studied in \cite{TT-perm}.

\begin{hypothesis}\label{finite group hypothesis}
    In the following, $G$ will be a finite group and $k$ a field of positive characteristic $p$ dividing the order of $G$. 
\end{hypothesis}

We start by recalling some elements from \cite{TT-perm}. 
\begin{recollection}
    For a $G$-set $X$, we let $k(X)$ denote the free $kG$-module with basis $X$ and $kG$-action given by $k$-linearly extending the $G$-action on $X$. A permutation $kG$-module is a $kG$-module isomorphic to one of the form $k(X)$. These modules form a full additive subcategory $\text{Perm}(G;k)$ of $\Mod(kG)$. The usual tensor product of $\Mod(kG)$, tensoring over $k$ with diagonal $G$-action, makes $\text{Perm}(G;k)$ into a tensor category. Then, the `big' derived category of permutation $kG$-modules is defined as the localisation
\begin{equation*}
   \DPerm(G;k) \vcentcolon= \K(\text{Perm}(G;k))[\{G\text{-quasi-isos}\}^{-1}], 
\end{equation*}
where a $G$-quasi-isomorphism $f: X \to Y$ is a morphism of complexes such that the induced morphism on $H$-fixed points $f^H$ is a quasi-isomorphism for all subgroups $H \leq G$. Equivalently, $\DPerm(G;k)$ may be described as the localising subcategory of $\K(\text{Perm}(G;k))$ generated by $\K^b(\perm(G;k)^{\natural})$, where $\perm(G;k)$ is the full subcategory of finitely generated modules in $\text{Perm}(G;k)$ and $(-)^{\natural}$ denotes the idempotent completion. It follows that 
\begin{equation*}
   \DPerm(G;k)^c \cong \K^b(\perm(G;k)^{\natural}). 
\end{equation*}
A set of rigid-compact generators of $\DPerm(G;k)$ is given by the permutation modules $k(G/H)$ for $H \leq G$. 
\end{recollection}

\begin{example}\label{trivial group}
    For the trivial group $G=1$, the category $\DPerm(1;k) = \D(k)$ is exactly the derived category of $k$, with compact part $\D^b(k)$. 
\end{example}

\begin{notation}
    To abbreviate notation, we will sometimes write $\mc T(G)$ for $\DPerm(G;k)$, suppressing the field $k$. 
\end{notation}

Let us now recall some significant geometric tt-functors on these categories. 
\begin{recollection}
    For any subgroup $H \leq G$, restriction of scalars yields the usual functor 
\begin{equation*}
    \Res^G_H \colon \DPerm(G;k) \to \DPerm(H;k). 
\end{equation*}
Its right adjoint is given by induction and is denoted by $\Ind^G_H$. 
If $N \trianglelefteq G$ is a normal subgroup, inflation of modules along the quotient map $G \to G/N$ induces a functor denoted by 
\begin{equation*}
    \Infl^{G/N}_G \colon \DPerm(G/N;k) \to \DPerm(G;k).
\end{equation*}
More involved are the so-called modular fixed point functors 
\begin{equation*}
   \Psi^{H}= \Psi^{H;G}\colon \DPerm(G;k) \to \DPerm(G/\!\!/ H;k), 
\end{equation*}
defined for any $p$-subgroup $H \leq G$. Here $G/\!\!/H$ denotes the Weyl group $N_G(H)/H$. On the generators $k(G/K)$, for $K \leq G$, we have 
\begin{equation*}
    \Psi^{H;G}(k(G/K)) \cong k((G/K)^H) = \begin{cases}
			k(N_G(H, K)/K), & H \text{ is subconjugate to } K \text{ in } G,\\
            0, & \text{otherwise},
		 \end{cases}
\end{equation*}
where $N_G(H, K) = \{ g \in G \mid g^{-1}Hg \subseteq K\}$.
Note that when $H$ is a normal subgroup, this simplifies to $G/\!\!/ H = G/H$ and 
\begin{equation*}
    \Psi^{H;G}(k(G/K))= \begin{cases}
			k\left( (G/H)/(K/H) \right), & H \leq K,\\
            0, & H \nleq K.
		 \end{cases}
\end{equation*}
The right adjoint of modular fixed points $ \Psi^{H;G}$ is denoted by $ \Psi^{H;G}_{\rho}$. See \cite[Section 5]{TT-perm} for more details on modular fixed point functors. 
\end{recollection}

\begin{recollection}
    In \cite{TT-perm}, Balmer--Gallauer give a full description of the Balmer spectrum of $\DPerm(G;k)^c$. We shall only recall the relevant results here. There is a finite localisation of $\DPerm(G;k)$ to the homotopy category of injectives $\K\Inj(kG)$:
\begin{equation*}
   \Gamma \colon \DPerm(G;k) \to \K\Inj(kG).
\end{equation*}
On compacts, $\Gamma$ can be identified with the functor 
\begin{equation*}
    \K^b(\perm(G;k)^{\natural}) \to \D^b(kG)\vcentcolon= \D^b(\fgmod(kG)),
\end{equation*}
induced by the inclusion of $\perm(G;k)^{\natural}$ in $\fgmod(kG)$. Thus, at the level of Balmer spectra, $\Spc(\D^b(kG))$ can be found as a subset of $\Spc(\mc T(G)^c)$, which is moreover shown to be an open subset. It is called the cohomological open of $\Spc(\mc T(G)^c)$ and is denoted by $V_G$. Note that 
\begin{equation*}
    \Spc(\D^b(kG)) \cong \Spec^h(H^{\bullet}(G;k)),
\end{equation*}
where $H^{\bullet}(G;k)$ denotes group cohomology of $G$ with coefficients in $k$. 
As a reminder, the more familiar stable module category $\Stab(kG)$ can be found as a finite localisation of $\K\Inj(kG)$ and the induced map on spectra is exactly the inclusion of the open subset 
\begin{equation*}
    \Spc(\stab(kG)) \cong \Proj(H^{\bullet}(G;k)) \hookrightarrow \Spec^h(H^{\bullet}(G;k)) \cong \Spc(\D^b(kG)).  
\end{equation*}

The spectrum $\Spc(\mc T(G)^c)$ has one closed point $\mc M(H)$ for every conjugacy class of $p$-subgroups $H\leq G$. Specifically, $\mc M(H)$ is defined as the kernel of the functor 
\begin{equation*}
    \mc T(G)^c \xrightarrow{\Psi^{H;G}} \mc T(G/\!\!/H)^c \xrightarrow{\Res^{G/\!\!/H}_1} \D^b(k). 
\end{equation*}
The cohomological open $V_G$ only contains one closed point, for the trivial subgroup $H=1$. All other closed points $\mc M(H)$ for $H \neq 1$ are to be found in the complement of $V_G$. 

Finally, Balmer--Gallauer also show that $\Spc(\mc T(G)^c)$ is Noetherian and stratified in the sense of \cite{BHS}. By \cite[Theorem 4.7]{BHScomp}, it follows that $\mc T(G)$ satisfies the steel condition. 
\end{recollection}

\begin{proposition}\label{residue regularity of KInj}
    Under \Cref{finite group hypothesis}, the category $\DPerm(G;k)$ is residually regular at the closed point $\mc M(1)$. 
\end{proposition}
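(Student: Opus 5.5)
The most direct route is \Cref{going to tt-field}. I would use the composite geometric tt-functor
\[
F\colon \DPerm(G;k) \xrightarrow{\Res^G_1} \DPerm(1;k) = \D(k),
\]
where $\D(k)$ is a tt-field by \Cref{tt fields are regular} and \Cref{trivial group}. Its right adjoint is $U = \Ind^G_1$, and $U(\unit) = U(k) = k(G/1) = kG$. This is a permutation module, hence one of the standard compact generators of $\DPerm(G;k)$. So \Cref{going to tt-field} shows that $\DPerm(G;k)$ is residually regular at the point $\varphi_F((0))$.

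It then remains to identify $\varphi_F((0))$ with $\mc M(1)$. By definition,
\[
\varphi_F((0)) = \Ker\big(\Res^G_1\colon \mc T(G)^c \to \D^b(k)\big).
\]
On the other hand, $\mc M(1)$ is the kernel of $\Res^{G/\!\!/1}_1 \circ \Psi^{1;G}$. The functor $\Psi^{1;G}$ is the modular fixed points for the trivial subgroup. It takes values in $\mc T(G/\!\!/1) = \mc T(G)$ and is the identity, as one checks on generators: $k((G/K)^1) = k(G/K)$. Hence the composite is just $\Res^G_1$, and the two kernels agree. If one prefers not to rely on the identification $\Psi^1 \cong \id$ beyond generators, an alternative is to note that $\varphi_F((0))$ is a prime lying in the cohomological open $V_G$: indeed $\Res^G_1$ factors through $\D^b(kG)$, and there it corresponds to the unique closed point, the irrelevant ideal $H^{+}$.

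The main (mild) obstacle is this identification step. The compactness of $U(\unit)$ is immediate, and everything else is supplied by \Cref{going to tt-field}.
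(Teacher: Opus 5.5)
Your proposal is correct and is essentially the paper's own proof. You apply \Cref{going to tt-field} to $\Res^G_1\colon \DPerm(G;k)\to\D(k)$, using that $\Ind^G_1(\unit)=kG$ is compact, and identify $\mc M(1)$ with $\Ker(\Res^G_1)$ on compacts via $\Psi^{1;G}=\mathrm{id}$; your alternative identification through the cohomological open $V_G$ also works but is not needed.
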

\begin{proof}
    As $\Psi^{1;G}$ is simply the identity functor on $\mc T(G)$, the point $\mc M(1)$ is exactly the kernel of the functor $\Res^G_1 \colon \mc T(G)^c \to \mc T(1)^c$. Since $\mc T(1) = \D(k)$ (\Cref{trivial group}) is a tt-field and $\Ind^G_1(\unit) = kG$ is compact in $\mc T(G)$, \Cref{going to tt-field} applies to the restriction functor $\Res^G_1$. Thus, the category $\mc T(G)$ is residually regular at $\mc M(1)$. 
\end{proof}

\begin{remark}\label{conjecture KInj is regular}
    The stalk category of $\mc T(G)$ at $\mc M(1)$ is exactly $\K\Inj(kG)$. So, \Cref{residue regularity of KInj} implies that the local category $\K\Inj(kG)$ is residually regular at its unique closed point. If we could answer the question raised in \Cref{residual regularity of local tt-categories} in the positive --- that is, it suffices to check residual regularity at closed points --- we could conclude that $\K\Inj(kG)$ is in fact residually regular at every point. It would follow that the stable module category $\Stab(kG)$ is residually regular as well. This is what one would expect since $\left(\K\Inj(kG)\right)^c$ and $\left(\Stab(kG)\right)^c$ are strongly generated triangulated categories; cf.\@ \Cref{inthelit}. 
\end{remark}

\begin{remark}
    In fact, the category $\DPerm(G;k)$ can be defined for any field $k$, with no restriction on the characteristic, and the same is true for the restriction functors~$\Res^G_H$. However, when the characteristic of $k$ does not divide $|G|$, the spectrum $\Spc(\mc T(G)^c)$ is just a point, by \cite[Corollary 3.9]{TT-perm}. More precisely, restriction to the trivial subgroup $\Res^G_1$ induces a surjective map on Balmer spectra in this case. Therefore, repeating the proof of \Cref{residue regularity of KInj}, $\DPerm(G;k)$ is always residually regular in the \emph{non-modular} setting.
\end{remark}

\begin{proposition}\label{restriction to subgroups}
    Under \Cref{finite group hypothesis}, let $H \leqslant G$ be a subgroup. Then the restriction functor $\Res^G_H \colon \DPerm(G;k) \to \DPerm(H;k)$ is a finite separable extension of finite degree. Consequently, $\DPerm(H;k)$ is residually regular at a point $\mc Q$ if and only if $\DPerm(G;k)$ is residually regular at $\varphi_{ \Res^G_H}(\mc Q)$. 
\end{proposition}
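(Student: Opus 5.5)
The plan is to exhibit the separable ring explicitly and then apply \Cref{the big one}. Take $A \vcentcolon= k(G/H) = \Ind^G_H(\unit) \in \mc T(G)^c$. This is a compact, commutative ring object: its multiplication is induced by the diagonal $G/H \to G/H \times G/H$ and its unit by $G/H \to G/G$. It is separable: the multiplication $k(G/H \times G/H) \to k(G/H)$ has a $G$-equivariant, $A$-bilinear section sending a basis element $x$ to $(x,x)$. This section is defined already at the level of $G$-sets, as the inclusion of the diagonal orbit as a direct summand of $G/H \times G/H$.

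Next we identify the extension. The tt-equivalence $A\text{-}\Mod_{\mc T(G)} \cong \mc T(H)$ carrying $F_A$ to $\Res^G_H$ is the standard one, as in Balmer's work on restriction to subgroups (\cite{restrict}) or in \cite{TT-perm}. I would quote it rather than reprove it. The argument runs as follows. The adjunction $\Res^G_H \dashv \Ind^G_H$ satisfies the projection formula, so $\Ind^G_H\Res^G_H \cong A \otimes -$ as monads. Since $\Ind^G_H$ is faithful on compact generators and the counit splits, the monadicity criterion \cite[Theorem 2.9]{restrict} (the same one used in \Cref{compatibilities}) gives the equivalence of Eilenberg--Moore categories. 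Compatibility with the triangulations comes from Balmer's \cite{tt-rings}, using enhanceability of $\DPerm$.

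For finite degree, I would use the fact from \cite{finitedegree} that $\deg(\Ind^G_H\unit) \le [G:H]$. This holds because after restriction to $H$ the ring $A$ splits off a factor $\unit$, leaving $k(\text{nontrivial double cosets})$, and an induction on the index handles the remainder. Equivalently, one can note that the degree of a ring of the form $k(X)$ for a finite $G$-set $X$ is bounded by $|X|$.

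The consequence then follows from \Cref{the big one} applied to $F = \Res^G_H$ with $\mc S = \mc T(H)$. Its hypotheses on $\mc S$ are exactly those recalled for permutation categories: $\Spc(\mc T(H)^c)$ is Noetherian, and $\mc T(H)$ satisfies the steel condition by stratification together with \cite[Theorem 4.7]{BHScomp}. If $p \nmid |H|$, the spectrum is a point and both facts hold trivially, so \Cref{finite group hypothesis} for $H$ is not needed.

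I expect the main obstacle to be the finite-degree bound. Separability and the identification of $\mc T(H)$ with $A$-modules are essentially formal, whereas the degree requires the splitting $\Res^G_H(A) \cong \unit \times A'$ together with a careful induction in the sense of \cite{finitedegree}.
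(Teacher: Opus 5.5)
Your proposal is correct and follows the paper's proof. It uses the same ring $A=k(G/H)$, quotes the identification $A\text{-}\Mod_{\mc T(G)}\cong\mc T(H)$ from the literature (the paper simply cites \cite[Recollection 4.6]{TT-perm}), proves finiteness of the degree, and applies \Cref{the big one} with $\mc S=\mc T(H)$.

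The only real difference is the degree bound. The paper bounds $\deg(A)\le[G:H]$ in the additive category $\text{Perm}(G;k)$ and uses that degree cannot increase along the tensor functor $\text{Perm}(G;k)\to\DPerm(G;k)$. You instead run the splitting-tower induction directly in $\DPerm$. That induction closes most cleanly in your $|X|$-formulation, because $\Res^G_H(G/H)$ minus its base point is in general a union of several $H$-orbits, so an induction on the index alone would also need a bound for degrees of products.

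One side remark: for $p\nmid|H|$, the steel condition does not follow just from the spectrum being a point. It does hold, though, because $kH$ is then semisimple and $\DPerm(H;k)\simeq\D(kH)$ is a tt-field.
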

\begin{proof}
    By \cite[Recollection 4.6]{TT-perm}, $\Res^G_H$ is a finite separable extension with respect to the ring $A_H = k(G/H)$ with the usual unit $u\colon\unit \to R_H$ mapping $1$ to $\sum_{\gamma \in G/H} \gamma$ and multiplication $\mu \colon R_H \otimes R_H \to R_H$ that is characterised by $\mu(\gamma \otimes \gamma) = \gamma$ and $\mu(\gamma \otimes \gamma')=0$ for all $\gamma \neq \gamma'$ in $G/H$. This ring $A_H$ is of finite degree in $\DPerm(G;k)$ because it is of finite degree in $\text{Perm}(G;k)$ (bounded by the index $[G:H]$) and the degree does not increase along the tensor functor $\text{Perm}(G;k) \to \DPerm(G;k)$. Finally, as $\mc T(G)$ satisfies the steel condition and $\Spc(\mc T(G)^c)$ is Noetherian, the statement on residual regularity follows immediately by application of \Cref{the big one}.
\end{proof}

\begin{lemma}\label{about Psi(1)}
Under \Cref{finite group hypothesis}, suppose that $G$ is a $p$-group and consider the modular fixed points functor $\Psi^{G} = \Psi^{G;G}: \DPerm(G;k) \to \D(k)$. Suppose that there is an object $S \in  \DPerm(G;k)$ that satisfies, for $i \in \Z$, 
\begin{equation*}
     \Hom_{\mc T(G)}(k(G/H)[i], S) \cong \begin{cases}
			0, & H \lneq G \text{ or } i \neq 0, \\
            k, & H = G \text{ and } i=0. 
		 \end{cases}
\end{equation*}
and such that there exists $\sigma \colon \Psi^G(S) \to k$ in $\D(k)$ and $\zeta\colon \unit \to S$ in $\DPerm(G;k)$ with $\sigma \circ \Psi^G(\zeta) = \id_{k}$ in $\D(k)$.
Then $S$ is isomorphic to $\kappa(\mc M(G))$ in $\DPerm(G;k)$. 
\end{lemma}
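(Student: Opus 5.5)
The plan is to compare $S$ with the object $\Psi^G_\rho(k)$, where $\Psi^G_\rho$ is the right adjoint of $\Psi^G=\Psi^{G;G}$. First I show $S\cong\Psi^G_\rho(k)$ using the two hypotheses on $S$. Then I show $\Psi^G_\rho(k)\cong\kappa(\mc M(G))$ using \Cref{functoriality_of_the_homological_spectrum} applied to $\Psi^G$.

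\emph{Step 1: Hom-groups of $\Psi^G_\rho(k)$.} For $K\leq G$ and $i\in\Z$, adjunction gives
\[\Hom_{\mc T(G)}(k(G/K)[i],\Psi^G_\rho(k))\cong\Hom_{\D(k)}(\Psi^G(k(G/K))[i],k).\]
Here $G/\!\!/G=1$. By the recollected formula for a normal subgroup, $\Psi^G(k(G/K))$ is $0$ when $K\lneq G$ and is $k$ when $K=G$. So these Hom-groups are exactly the ones prescribed for $S$ in the statement.

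\emph{Step 2: $S\cong \Psi^G_\rho(k)$.} Let $\sigma^\flat\colon S\to\Psi^G_\rho(k)$ be the adjoint of $\sigma$. I check that $\sigma^\flat$ induces isomorphisms on $\Hom(k(G/K)[i],-)$ for every $K\leq G$ and $i\in\Z$.
\begin{itemize}
\item If $K\lneq G$ or $i\neq 0$, both sides vanish.
\item If $K=G$ and $i=0$, both sides are one-dimensional over $k$. The map sends $\zeta$ to the adjoint of $\sigma\circ\Psi^G(\zeta)=\id_k\neq 0$, so it is nonzero and hence bijective.
\end{itemize}
The $k(G/K)[i]$ are compact generators of $\DPerm(G;k)$. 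So the cone of $\sigma^\flat$ receives no nonzero maps from them, and therefore vanishes. Hence $\sigma^\flat$ is an isomorphism.

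\emph{Step 3: $\kappa(\mc M(G))$ is a summand of $\Psi^G_\rho(k)$.} $\D(k)$ is a tt-field, so by \Cref{tt fields are regular} its unique homological prime is $(0)$, with residue object $\unit=k$. Put $\mc H:=\varphi^h_{\Psi^G}((0))$. By the commutative square in \Cref{functoriality_of_the_homological_spectrum}, $\pi_{\mc T(G)}(\mc H)=\varphi_{\Psi^G}((0))$. This is the kernel of $\Psi^G$ on compacts, which is $\mc M(G)$. By the steel condition for $\mc T(G)$, $\mc H$ is the unique homological prime over $\mc M(G)$, so $\kappa(\mc M(G))=\kappa(\mc H)$ in the sense of \Cref{kappanot}. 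The same recollection then says $\kappa(\mc H)$ is a direct summand of $\Psi^G_\rho(k)$. Write $\Psi^G_\rho(k)\cong\kappa(\mc H)\oplus C$.

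\emph{Step 4: the complement $C$ vanishes.} By Step 1, the total Hom-dimension of $\Psi^G_\rho(k)$ against all generators $k(G/K)[i]$ is $1$, concentrated at $\unit=k(G/G)$ in degree $0$. The morphism $\eta_{\mc H}\colon\unit\to\kappa(\mc H)$ is nonzero, because $\bar\eta_{\mc H}$ is a monomorphism out of $\bar\unit\neq 0$. So $\kappa(\mc H)$ already uses up that one dimension. Consequently $C$ receives no nonzero maps from any generator, and $C=0$. Combining with Step 2 gives $S\cong\kappa(\mc M(G))$.

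The step I expect to be the main obstacle is Step 3: making sure the homological prime produced by $\Psi^G$ really lies over $\mc M(G)$ and that the summand statement applies to $\Psi^G$. The remaining steps are adjunction bookkeeping and detecting isomorphisms on compact generators.
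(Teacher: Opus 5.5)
Your proof is correct and is essentially the paper's argument. The paper also obtains $\kappa(\mc M(G))$ as a direct summand of $\Psi^G_\rho(\unit)$ via \Cref{functoriality_of_the_homological_spectrum}, and uses the Hom computation against the generators $k(G/H)[i]$ to conclude that this summand is all of $\Psi^G_\rho(\unit)$ (it phrases this as indecomposability). It then checks, using $\zeta$, that the adjoint of $\sigma$ is an isomorphism on generators. You simply do the two halves in the opposite order and make the nonvanishing of $\eta_{\mc H}$ explicit.
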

\begin{proof}
Note that the closed point $\mc M(G)$ is exactly the kernel of $\Psi^G$ (restricted to compacts), since $\Res^{G/\!\!/G}_1 =\text{Id}$. Therefore, $\kappa(\mc M(G))$ is a direct summand of $\Psi^G_{\rho}(\mathbbm{1})$ in $\mc T(G)$ (see \Cref{functoriality_of_the_homological_spectrum}). The latter is indecomposable in $\mc T(G)$ as 
\begin{equation*}
    \Hom_{\mc T(G)}(k(G/H)[i], \Psi^G_{\rho}(\mathbbm{1})) \cong \begin{cases}
			0, & H \lneq G \text{ or } i \neq 0, \\
            k, & H = G \text{ and } i=0. 
		 \end{cases}
\end{equation*}
So $\kappa(\mc M(G)) \cong \Psi^G_{\rho}(\mathbbm{1})$ and it suffices to prove that $S$ is isomorphic to $\Psi^G_{\rho}(\mathbbm{1})$. 

By the adjunction $\Psi^G \dashv \Psi^G_{\rho}$ the given map $\sigma$ corresponds to a map $\chi: S \to \Psi^G_{\rho}(\mathbbm{1})$ for which
        \[\epsilon \circ \Psi^G(\chi) = \sigma,\]
        where $\epsilon$ is the counit of $\Psi^G \dashv \Psi^G_{\rho}$. We will prove that $\chi$ is an isomorphism. For this, it suffices to check that the morphisms
        \[\Hom_{\mc T(G)}\left(k(G/H)[i], \chi\right)\colon \Hom_{\mc T(G)}(k(G/H)[i], S) \to \Hom_{\mc T(G)}(k(G/H)[i], \Psi^G_{\rho}(\unit)) \] 
        are isomorphisms for all $H \leqslant G$ and for all $i \in \Z$ (because these are generators of $\mc T(G)$). 

        For $H \lneq G$, there is nothing to check because both source and target are zero by assumption for all $i \in \Z$. For $H = G$, i.e when $k(G/H) = \unit$, we must only check the case $i =0$ for the same reason. When $i =0$, both source and target are isomorphic to $k$. As such, the given map $\zeta \colon \unit \to S$ must generate all of $\Hom_{\mc T(G)}(\unit, S) \cong k$ and it suffices to prove that $\chi \circ \zeta \neq 0$, or equivalently that 
    \[\epsilon \circ \Psi^G(\chi \circ \zeta) \neq 0. \]
    We have 
    \begin{equation*}
        \epsilon \circ \Psi^G(\chi \circ \zeta) = \epsilon \circ \Psi^G(\chi) \circ \Psi^G(\zeta) = \sigma \circ \Psi^G(\zeta) = \id_k, 
    \end{equation*}
    where the final equality follows from our assumption on $\sigma$ and $\zeta$.
\end{proof}

\begin{example}\label{cyclic group of order p}
\hfill
\begin{enumerate} 
    \item Consider the following exact complex in $\DPerm(C_2;k)$:
   \begin{equation*}
       S= \quad \dots 0 \to 0 \to k \to kC_2 \to k \to 0 \to 0 \dots \in \DPerm(C_2), 
   \end{equation*}
concentrated in cohomological degrees $-2, -1$ and $0$. It is straightforward to check that $S$ satisfies the conditions of \Cref{about Psi(1)}, with maps 
\[\begin{tikzcd}
	\unit & \cdots & 0 & 0 & 0 & k & 0 & \cdots \\
	S & \cdots & 0 & k & {kC_2} & k & 0 & \cdots
	\arrow["\zeta"', from=1-1, to=2-1]
	\arrow["{=}"{description}, draw=none, from=1-1, to=2-2]
	\arrow[from=1-2, to=1-3]
	\arrow[from=1-3, to=1-4]
	\arrow[from=1-3, to=2-3]
	\arrow[from=1-4, to=1-5]
	\arrow[from=1-4, to=2-4]
	\arrow[from=1-5, to=1-6]
	\arrow[from=1-5, to=2-5]
	\arrow[from=1-6, to=1-7]
	\arrow["1", from=1-6, to=2-6]
	\arrow[from=1-7, to=1-8]
	\arrow[from=1-7, to=2-7]
	\arrow[from=2-2, to=2-3]
	\arrow[from=2-3, to=2-4]
	\arrow[from=2-4, to=2-5]
	\arrow[from=2-5, to=2-6]
	\arrow[from=2-6, to=2-7]
	\arrow[from=2-7, to=2-8]
\end{tikzcd}\]
and 
\[\begin{tikzcd}
	{\Psi^G(S) } & \cdots & 0 & k & 0 & k & 0 & \cdots \\
	{\unit  } & \cdots & 0 & 0 & 0 & k & 0 & \cdots
	\arrow["\sigma"', from=1-1, to=2-1]
	\arrow["{=}"{description}, draw=none, from=1-1, to=2-2]
	\arrow[from=1-2, to=1-3]
	\arrow[from=1-3, to=1-4]
	\arrow[from=1-3, to=2-3]
	\arrow[from=1-4, to=1-5]
	\arrow[from=1-4, to=2-4]
	\arrow[from=1-5, to=1-6]
	\arrow[from=1-5, to=2-5]
	\arrow[from=1-6, to=1-7]
	\arrow["1", from=1-6, to=2-6]
	\arrow[from=1-7, to=1-8]
	\arrow[from=1-7, to=2-7]
	\arrow[from=2-2, to=2-3]
	\arrow[from=2-3, to=2-4]
	\arrow[from=2-4, to=2-5]
	\arrow[from=2-5, to=2-6]
	\arrow[from=2-6, to=2-7]
	\arrow[from=2-7, to=2-8]
\end{tikzcd}\]
So we have $\kappa(\mc M(C_2)) \cong S$. Since $S$ is clearly compact in $\mc T(C_2)$, we conclude that $\DPerm(C_2;k)$ is residually regular at the closed point $\mc M(C_2)$. 

\item Suppose that $p$ is an odd prime. Using the identification $R \vcentcolon= k[t]/(t^p) \cong kC_p$, we may define maps $h, g$ and $f$ as follows: 
\begin{align*}
    & h : R \oplus k \xrightarrow{\begin{pmatrix} t & t^{p-1} \end{pmatrix}} R, \\
    & g :  R \oplus k \xrightarrow{\begin{pmatrix} t^{p-2} & t^{p-1} \\ -1 & 0 \end{pmatrix}} R \oplus k, \\ 
    & f : R \oplus k \xrightarrow{\begin{pmatrix} t & t^{p-1} \\ -1 & 0 \end{pmatrix}} R \oplus k.
\end{align*}
Then, consider the following eventually left periodic complex in $\DPerm(C_p;k)$:
\begin{equation*}
       S= \quad \dots kC_p \oplus k \xrightarrow{g} kC_p \oplus k \xrightarrow{f} kC_p \oplus k \xrightarrow{g} kC_p \oplus k \xrightarrow{h} kC_p \to k \to 0  \dots,
   \end{equation*}
   with $k$ in cohomological degree $0$. Again, one can check that $S$ satisfies the conditions of \Cref{about Psi(1)}, with maps 
\[\begin{tikzcd}
	{\unit } & \cdots & 0 & 0 & k & 0 & \cdots \\
	S & \cdots & {kC_p \oplus k} & {kC_p} & k & 0 & \cdots
	\arrow["\zeta"', from=1-1, to=2-1]
	\arrow["{=}"{description}, draw=none, from=1-1, to=2-2]
	\arrow[from=1-2, to=1-3]
	\arrow[from=1-3, to=1-4]
	\arrow[from=1-3, to=2-3]
	\arrow[from=1-4, to=1-5]
	\arrow[from=1-4, to=2-4]
	\arrow[from=1-5, to=1-6]
	\arrow["1", from=1-5, to=2-5]
	\arrow[from=1-6, to=1-7]
	\arrow[from=1-6, to=2-6]
	\arrow["f"', from=2-2, to=2-3]
	\arrow["g"', from=2-3, to=2-4]
	\arrow["h"', from=2-4, to=2-5]
	\arrow[from=2-5, to=2-6]
	\arrow[from=2-6, to=2-7]
\end{tikzcd}\]
and 
\[\begin{tikzcd}
	{\Psi^G(S) } & \cdots & k & k & k & 0 & \cdots \\
	{\unit  } & \cdots & 0 & 0 & k & 0 & \cdots
	\arrow["\sigma"', from=1-1, to=2-1]
	\arrow["{=}"{description}, draw=none, from=1-1, to=2-2]
	\arrow[from=1-2, to=1-3]
	\arrow["0", from=1-3, to=1-4]
	\arrow[from=1-3, to=2-3]
	\arrow["0", from=1-4, to=1-5]
	\arrow[from=1-4, to=2-4]
	\arrow[from=1-5, to=1-6]
	\arrow["1", from=1-5, to=2-5]
	\arrow[from=1-6, to=1-7]
	\arrow[from=1-6, to=2-6]
	\arrow[from=2-2, to=2-3]
	\arrow[from=2-3, to=2-4]
	\arrow[from=2-4, to=2-5]
	\arrow[from=2-5, to=2-6]
	\arrow[from=2-6, to=2-7]
\end{tikzcd}\]
Observe that $S$ is not compact in $\DPerm(C_p;k)$ --- if it were, the object $\Psi^{C_p}(S)$ would be compact in $\D(k)$ as well. By \Cref{residue regularity at closed points}, this implies that $\kappa(\mc M(C_p))$ is not compact in the stalk category $\DPerm(C_p;k)_{\mc M(C_p)}$. Thus, $\DPerm(C_p;k)$ is \emph{not} residually regular at the closed point $\mc M(C_p)$. 
\end{enumerate}
\end{example}

\begin{lemma}\label{inflation}
    Under \Cref{finite group hypothesis}, let $N$ be a normal subgroup of $G$. For any object $x \in \DPerm(G/N;k)^c$, inflation $\Infl^{G/N}_G$ along the quotient map $G \to G/N$ induces a fully faithful functor 
    \begin{equation*}
        \frac{\DPerm(G/N;k)}{\Loc\left(\langle x\rangle\right)} \longrightarrow \frac{\DPerm(G;k)}{\Loc\left(\langle \Infl^{G/N}_G(x)\rangle\right)}.
    \end{equation*}
\end{lemma}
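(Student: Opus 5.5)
The plan is to reduce the statement to the full faithfulness of $\Infl^{G/N}_G$ itself, using the fact that both quotients are smashing (finite) localisations. Write $\mc T(G/N) = \DPerm(G/N;k)$, $I \vcentcolon= \Infl^{G/N}_G$, and let
\[ \mb E_x \to \unit \to \mb F_x \to \Sigma \mb E_x \]
be the idempotent triangle in $\mc T(G/N)$ attached to the thick $\otimes$-ideal $\langle x\rangle$, in the sense of \cite{Balmer-Favi}. Then the quotient $\mc T(G/N)/\Loc(\langle x\rangle)$ identifies with the essential image of $\mb F_x \otimes -$. Concretely, for all $X,Y \in \mc T(G/N)$,
\[ \Hom_{\mc T(G/N)/\Loc(\langle x\rangle)}(X,Y) \cong \Hom_{\mc T(G/N)}(X, \mb F_x \otimes Y), \]
because $\mb F_x\otimes Y$ is $\langle x\rangle$-local. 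The same description holds in $\mc T(G)$ for $\langle I(x)\rangle$.

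\textbf{Step 1 (idempotents match).} Since $I$ is a geometric tt-functor, applying it to the triangle above gives an exact triangle $I(\mb E_x) \to \unit \to I(\mb F_x) \to \Sigma I(\mb E_x)$ in $\mc T(G)$. Here $I(\mb E_x) \in I(\Loc(\langle x\rangle)) \subseteq \Loc(\langle I(x)\rangle)$, and $I(\mb F_x)\otimes I(x) \cong I(\mb F_x \otimes x) = 0$. By uniqueness of idempotent triangles, $I(\mb F_x) \cong \mb F_{I(x)}$. In particular $I$ maps $\Loc(\langle x\rangle)$ into $\Loc(\langle I(x)\rangle)$, so it induces the displayed functor $\tilde I$ on quotients.

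\textbf{Step 2 (reduction).} For $X, Y \in \mc T(G/N)$, Step 1 gives
\[ \Hom(\tilde I X, \tilde I Y) \cong \Hom_{\mc T(G)}(I X, \mb F_{I(x)} \otimes I Y) \cong \Hom_{\mc T(G)}(IX, I(\mb F_x\otimes Y)). \]
The map induced by $\tilde I$ is compatible with these identifications. So $\tilde I$ is fully faithful as soon as $I\colon \mc T(G/N)\to \mc T(G)$ is fully faithful on all (non-compact) objects, applied to the pair $X$ and $\mb F_x \otimes Y$.

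\textbf{Step 3 (main obstacle: $I$ is fully faithful).} I expect this to be the heart of the proof. Let $R$ be the coproduct-preserving right adjoint of $I$. I would identify $R$ with degreewise $N$-fixed points $(-)^N$. For a $G$-set $Y$ one has $k(Y)^N \cong k(Y/N)$ as $kG/N$-modules, via orbit sums, so $(-)^N$ preserves permutation complexes. One must also check that $(-)^N$ sends $G$-quasi-isomorphisms to $G/N$-quasi-isomorphisms; this holds since $(M^N)^{H/N} = M^H$ for $N \le H$. For a $G/N$-set $Z$ the unit $k(Z) \to (I k(Z))^N$ is then visibly an isomorphism.

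Hence the unit $\id \to RI$ is an isomorphism on the compact generators $k((G/N)/(H/N))$. Both $\id$ and $RI$ are exact and coproduct-preserving, so the objects on which the unit is invertible form a localising subcategory containing the generators. The unit is therefore an isomorphism on all of $\mc T(G/N)$, i.e.\ $I$ is fully faithful.

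If identifying $R$ concretely turns out to be delicate in the localised category, I would instead verify full faithfulness directly on generators: show that $\Hom_{\mc T(G)}(I k(A)[i], I k(B))$ agrees with $\Hom_{\mc T(G/N)}(k(A)[i], k(B))$ by a Mackey-type computation of morphisms between permutation modules. Then extend by the same dévissage, first in the source over compacts and then in the target using that $I$ preserves coproducts and compacts. Combined with Steps 1 and 2, this gives the lemma.
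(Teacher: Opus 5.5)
Your proposal is correct and follows the same route as the paper. The paper simply cites the two ingredients you prove: full faithfulness of $\Infl^{G/N}_G$ \cite[Corollary 5.16]{TT-perm}, which your Step 3 reproves via the fixed-point right adjoint, and the general fact that a fully faithful geometric tt-functor induces a fully faithful functor on the finite localisations away from $\supp(x)$ and its preimage \cite[Lemma 5.1]{SpecArtinMotives}, which your idempotent-triangle argument in Steps 1--2 essentially reproves. One minor simplification is available: since $(\Infl^{G/N}_G M)^N = M$ degreewise, the unit is the identity on the nose, so the dévissage over generators in Step 3 is not needed.
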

\begin{proof}
    Note that inflation is a fully faithful functor $\DPerm(G/N;k) \to \DPerm(G;k)$ by \cite[Corollary 5.16]{TT-perm}. Now the result follows by an application of \cite[Lemma 5.1]{SpecArtinMotives} using the Thomason subset $\supp_{\mc T(G/N)}(x)$. 
\end{proof}

\begin{proposition}\label{inflation induces an equivalence}
    Under \Cref{finite group hypothesis}, suppose that $G$ is a p-group and let $N$ denote its Frattini subgroup. Then inflation along $G \to G/N$ induces an equivalence of stalk categories: 
    \begin{equation*}
        \DPerm(G/N; k)_{\mc M(G/N)} \xrightarrow{\cong} \DPerm(G; k)_{\mc M(G)}.
    \end{equation*}
\end{proposition}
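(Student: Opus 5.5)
The plan is to use \Cref{inflation} with a well-chosen compact object $x$, so that full faithfulness is automatic, and then to prove essential surjectivity by checking that the image contains a set of compact generators of the target.

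\textbf{Step 1: identifying the stalks as quotients.} Write $\bar G = G/N$. By definition,
\[
\mc T(\bar G)_{\mc M(\bar G)} = \mc T(\bar G)/\Loc(\mc M(\bar G)), \qquad \mc T(G)_{\mc M(G)} = \mc T(G)/\Loc(\mc M(G)).
\]
Both spectra are Noetherian, so each of these ideals is generated by a single compact object. I want $x \in \mc T(\bar G)^c$ with $\langle x\rangle = \mc M(\bar G)$ and $\langle \Infl^{\bar G}_G(x)\rangle = \mc M(G)$. The natural candidate is
\[
x = \bigoplus_{K \lneq \bar G} k(\bar G/K).
\]
For a $p$-group, $\mc M(\bar G) = \Ker(\Psi^{\bar G})$, and the kernel of the top modular fixed points should be the thick $\otimes$-ideal generated by the $k(\bar G/K)$ with $K$ proper. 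I would justify this through the Balmer--Gallauer description: $\Psi^{\bar G}$ is the finite localisation away from these permutation modules, as in \cite[Section 5]{TT-perm}. The same description gives $\mc M(G) = \langle k(G/H) \mid H \lneq G\rangle$.

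Since $\Infl^{\bar G}_G(k(\bar G/K)) = k(G/\tilde K)$, where $\tilde K \supseteq N$ is the preimage of $K$, it remains to show that
\[
\langle k(G/\tilde K) \mid N \le \tilde K \lneq G\rangle = \langle k(G/H) \mid H \lneq G\rangle .
\]
Here the Frattini hypothesis enters. Every proper subgroup $H$ lies in a maximal subgroup $M$, and every maximal subgroup contains $N$. Moreover $k(G/H) \cong \Ind^M_H(k)$ is a direct summand of $k(G/M)\otimes k(G/H)$, because $k(G/M)\otimes k(G/H)$ decomposes over double cosets and one summand is $k(G/H)$. So $k(G/H)$ lies in the ideal generated by the $k(G/M)$ with $M$ maximal, and hence containing $N$. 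This proves the equality, and \Cref{inflation} then gives full faithfulness.

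\textbf{Step 2: essential surjectivity.} The functor is a fully faithful coproduct-preserving triangulated functor between compactly generated categories. Its essential image is therefore a localising subcategory, and it suffices to check that it contains the images of the generators $k(G/H)$, for $H \le G$. For $H \lneq G$, the object $k(G/H)$ lies in $\mc M(G)$ and so vanishes in the stalk. For $H = G$, we have $k(G/G) = \unit = \Infl(\unit)$. Hence the induced functor is an equivalence.

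\textbf{Expected obstacle.} The delicate point is the identification $\mc M(G) = \langle k(G/H) \mid H \lneq G\rangle$ for a $p$-group $G$, that is, that the kernel of $\Psi^G$ on compacts is exactly this ideal. I expect to cite it from \cite{TT-perm}, which realises $\Psi^G$ as the localisation with respect to these permutation modules followed by an equivalence. If that reference is not available in exactly this form, the fallback is the inclusion $\supseteq$, which is clear, together with a support argument. Both ideals have support equal to the complement of $\gen(\mc M(G))$, using that the spectrum is Noetherian. Equal supports then force equality of the two radical ideals by the classification of thick $\otimes$-ideals.
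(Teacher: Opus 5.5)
Your proposal is correct and follows the paper's proof essentially step for step: the same object $x$ fed into \Cref{inflation}, the same reduction of $\mc M(G)$ to the $k(G/H)$ with $N\le H\lneq G$ via maximal subgroups, and the same conclusion that both stalks are generated by their units. The only variation is that you get $k(G/H)\in\langle k(G/M)\rangle$ for $H\le M$ from the Mackey summand $k(G/H)\mid k(G/M)\otimes k(G/H)$, where the paper uses the support inclusion of \cite[Proposition 4.7]{TT-perm}; the identification $\langle x\rangle=\mc M(G/N)$ is exactly what the paper cites from \cite[Example 7.30]{TT-perm}. Two of your side remarks would not hold up on their own, though nothing depends on them: Noetherianity alone does not make such primes singly generated, and the support-based fallback presupposes the very inclusion it is meant to prove.
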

\begin{proof}
    Consider the object 
    \begin{equation*}
        x = \bigoplus_{K \lneq G/N} k((G/N)/K) \in \mc T(G/N)^c.
    \end{equation*}
    Note that $x$ generates $\mc M(G/N)$ as a tt-ideal (see \cite[Example 7.30]{TT-perm}) and the image of $x$ under inflation is exactly 
    \begin{equation*}
        \Infl^{G/N}_G(x) = \bigoplus_{N \leq H \lneq G} k(G/H).
    \end{equation*}
    Moreover, as $N$ is the Frattini subgroup, we have an equality of tt-ideals in $\DPerm(G;k)$:
    \begin{equation*}
       \mc M(G) = \langle k(G/H) \mid H \lneq G \rangle =  \langle k(G/H)  \mid N \leq H \lneq G  \rangle. 
    \end{equation*}
    After all, any proper subgroup $K \lneq G$ is contained in a maximal subgroup $H$. The inclusion $K \subseteq H$ implies that $\supp(k(G/K)) \subseteq \supp(k(G/H))$ by \cite[Proposition 4.7]{TT-perm} and therefore $k(G/K) \in \langle k(G/H)\rangle$. Since $H$ is maximal, we have $N\subseteq H$ and thus 
    \begin{equation*}
        k(G/K)  \in \langle k(G/H)  \mid N \leq H \lneq G  \rangle.
    \end{equation*}
    
    So, applying \Cref{inflation} with this object $x$ implies exactly that inflation $\Infl^{G/N}_N$ induces a fully faithful geometric tt-functor
    \begin{equation*}
        \DPerm(G/N; k)_{\mc M(G/N)} \longrightarrow \DPerm(G; k)_{\mc M(G)}.
    \end{equation*}
    As both of these categories are generated by their unit, we conclude that the functor is an equivalence.     
\end{proof}

\begin{recollection}
    Let $G = C_{p^n}$, for $n\geq 1$. By \cite[Proposition 8.3]{TT-perm}, the spectrum of $\DPerm(G;k)^c$ has $2n+1$ points arranged in a zigzag pattern with specialisations `pointing up':
\[\begin{tikzcd}[column sep=1em]
	{\overset{\mc M(G)}{\bullet}} && {\overset{\mc M(H_{n-1})}{\bullet}} &&&& {\overset{\mc M(H_{1})}{\bullet}} && {\overset{\mc M(1)}{\bullet}} \\
	& \bullet && {} & \dots & {} && \bullet
	\arrow[no head, from=1-1, to=2-2]
	\arrow[no head, from=1-3, to=2-4]
	\arrow[no head, from=1-7, to=2-6]
	\arrow[no head, from=1-7, to=2-8]
	\arrow[no head, from=2-2, to=1-3]
	\arrow[no head, from=2-8, to=1-9]
\end{tikzcd}\]
Here $H_i$ denotes the unique subgroup of $G$ of order $p^i$. 
\end{recollection}

\begin{proposition}\label{residue regularity of C_p^n}
    Let $k$ be a field of characteristic $2$. The category $\DPerm(C_{2^n};k)$ is residually regular.
\end{proposition}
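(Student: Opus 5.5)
The plan is to check residual regularity at each of the $2n+1$ points of $\Spc(\mc T(C_{2^n})^c)$, combining three tools: the closed-point reduction via Frattini inflation, the computation for $C_2$ in \Cref{cyclic group of order p}, and restriction to subgroups (\Cref{restriction to subgroups}). Write $G = C_{2^n}$ and $H_i$ for its subgroup of order $2^i$.

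\textbf{Closed point $\mc M(G)$.} The Frattini subgroup of $G$ is $N = H_{n-1}$, so $G/N \cong C_2$. By \Cref{inflation induces an equivalence}, inflation gives an equivalence of stalk categories $\mc T(C_2)_{\mc M(C_2)} \simeq \mc T(G)_{\mc M(G)}$. This equivalence is a geometric tt-equivalence of local categories, so it preserves compactness and identifies the residue objects at the unique closed points. \Cref{cyclic group of order p}(a) shows $\mc T(C_2)$ is residually regular at $\mc M(C_2)$. Hence $\mc T(G)$ is residually regular at $\mc M(G)$.

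\textbf{Closed points $\mc M(H_i)$ with $i<n$.} Here I would use \Cref{restriction to subgroups}: it suffices to find a point $\mc Q$ of $\Spc(\mc T(H_i)^c)$ with $\varphi_{\Res^G_{H_i}}(\mc Q) = \mc M(H_i)$ at which $\mc T(H_i)$ is residually regular. Take $\mc Q = \mc M(H_i)$, the top closed point for the group $H_i$. Since $H_i \cong C_{2^i}$, the previous paragraph applied to $H_i$ gives residual regularity there. It remains to check $\varphi_{\Res}(\mc M(H_i)^{H_i}) = \mc M(H_i)^{G}$. This follows from the compatibility $\Psi^{H_i;G}$ followed by $\Res^{G/\!\!/H_i}_1$ agreeing with $\Res^G_{H_i}$ followed by $\Psi^{H_i;H_i}$, which is a standard property of modular fixed points (\cite{TT-perm}). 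For $i=0$, the point $\mc M(1)$ is also covered directly by \Cref{residue regularity of KInj}.

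\textbf{Non-closed points.} The $n$ lower points sit between $\mc M(H_{i-1})$ and $\mc M(H_i)$ for $i=1,\dots,n$. The one between $\mc M(1)$ and $\mc M(H_1)$ lies in the cohomological open $V_G$. In the \cite{TT-perm} description, each lower point is the image of the unique non-closed point of $\Spc(\mc T(C_2)^c)$ under a suitable composite of restriction and modular fixed points. I would first try restriction again: every such point should come, under $\varphi_{\Res^G_{H_i}}$, from a point of $\Spc(\mc T(H_i)^c)$. Iterating, the question reduces to the generic point of $\mc T(C_2)$ for $i=1$; for higher $i$ it becomes a point of $\mc T(H_i)$ adjacent to the top closed point, which Frattini inflation to $C_2$ handles once more. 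The inflation equivalence covers the whole stalk at $\mc M(G)$, and the generic point below $\mc M(G)$ is a generalisation of $\mc M(G)$, so it lies in that stalk.

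So everything reduces to the spectrum of $\mc T(C_2)$, which has three points $\mc M(1)$, $\mc M(C_2)$ and one point $\mc P$ lying in $V_{C_2} \cong \Spec^h(H^\bullet(C_2;k))$. At $\mc P$, which corresponds to the irrelevant-free generic point of $\Spec^h(k[x])$, the stalk is a localisation of $\K\Inj(kC_2)$, namely $\Stab(kC_2)$. For $p=2$ this is $\D(k[x^{\pm1}])$, or more precisely a tt-field (\Cref{tt fields are regular}), so it is residually regular.

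The main obstacle is bookkeeping: checking that the middle points of the zigzag are images under restriction (or lie in stalks handled by inflation), using the explicit description of $\varphi_{\Res}$ in \cite[Section 8]{TT-perm}.
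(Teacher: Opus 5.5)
Your proposal is correct and is essentially the paper's argument. The paper inducts on $n$: it restricts to $C_{2^{n-1}}$, which by \cite[Lemma 8.5]{TT-perm} catches the rightmost $2n-1$ points, and applies \Cref{restriction to subgroups}; the top closed point $\mc M(C_{2^n})$ and its adjacent lower point are handled by the Frattini inflation equivalence with the stalk of $\DPerm(C_2;k)$ at $\mc M(C_2)$, settled by \Cref{cyclic group of order p} and the tt-field $\Stab(kC_2)$ exactly as you do. Your version unrolls this induction by restricting directly to each $H_i$ and using the inflation equivalence for $H_i$, and the image-of-restriction bookkeeping you flag as the main obstacle is precisely that cited lemma.
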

\begin{proof}
    We prove this by induction on $n$. In the base case, $\Spc(\mc T(C_2)^c)$ has three points. The rightmost point is $\mc M(1)$ which is always residually regular by \Cref{residue regularity of KInj}. The leftmost point is $\mc M(C_2)$ and it is residually regular as observed in \Cref{cyclic group of order p}. Finally, the stalk category at the bottom point is precisely $\Stab(kC_2)$, which is a tt-field, and therefore we also have residual regularity there by \Cref{tt fields are regular}. 
    
   Now suppose that $n > 1$. The induction hypothesis states that $\mc T(C_{2^{n-1}})$ is residually regular at every point. The restriction functor $\mc T(C_{2^n}) \to \mc T(C_{2^{n-1}})$ catches the rightmost $2n-1$ points of $\Spc(\mc T(C_{2^{n}})^c)$, see \cite[Lemma 8.5]{TT-perm}. By \Cref{restriction to subgroups}, we know that restriction `reflects' residual regularity and thus these $2n-1$ points are all residually regular. The remaining two points are taken care of by \Cref{inflation induces an equivalence}: The stalk category of $\mc T(C_{2^n})$ at $\mc M(C_{2^n})$ is equivalent to the stalk category of $\DPerm(C_{2})$ at $\mc M(C_2)$, of which all points are residually regular by the base case. 
\end{proof}

\begin{remark}
    For $p$ odd, we can push through the same reasoning to find that $\DPerm(C_{p^n};k)$ is residually regular at all points on the bottom, as well as the rightmost closed point $\mc M(1)$, but not at any of the closed points $\mc M(H)$ for $1 \neq H \leq G$.
\end{remark}

\begin{example}\label{klein four}
    Let $E = C_2 \times C_2$ be the Klein four-group. Then $\Spc(\mc T(E)^c)$ has five closed points. As we know, $\mc T(E)$ is certainly residually regular at $\mc M(1)$ by \Cref{residue regularity of KInj}. The closed points $\mc M(N_0), \mc M(N_1)$ and $\mc M(N_{\infty})$, corresponding to the three cyclic subgroups $N_0, N_1$ and $N_{\infty}$ of $E$, each lie in the image of restriction to the respective subgroup: $$\mc M(N_{\nu}) \in \im\left(\varphi_{\Res^G_{N_{\nu}}}\right), \quad \nu =0, 1, \infty.$$ Since $N_{\nu} \cong C_2$, the category $\mc T(N_{\nu})$ is residually regular by  \Cref{residue regularity of C_p^n}. It follows, by \Cref{restriction to subgroups}, that $\mc T(E)$ is also residually regular at $\mc M(N_{\nu})$ for $\nu =0, 1, \infty$. 

For the remaining closed point $\mc M(E)$, we will use \Cref{about Psi(1)} once more. We identify the rings $kE \cong R\vcentcolon= k[x, y]/(x^2, y^2)$ and under this identification we have
\begin{align*}
    & k(E/N_0) \cong M_0 \vcentcolon= \frac{R}{(x)} , \\
    & k(E/N_1) \cong M_1 \vcentcolon= \frac{R}{(y)}, \\
    & k(E/N_{\infty}) \cong M_{\infty} \vcentcolon= \frac{R}{(x+y)}.
\end{align*}
Then consider the following eventually left periodic complex in $\DPerm(E;k)$: 
\begin{equation*}
 S = \dots \xrightarrow{g} M_0 \oplus M_1 \oplus M_{\infty} \xrightarrow{f} R \oplus k \oplus k \xrightarrow{g} M_0 \oplus M_1 \oplus M_{\infty} \xrightarrow{h} R \to k \to 0 \dots
\end{equation*}
with $k$ in cohomological degree $0$ and maps given by 
\begin{align*}
    & h = \begin{pmatrix} x & y & x+y \end{pmatrix} \\
    & g = \begin{pmatrix} 1 & 0 & y \\ 1 & x & 0 \\ 1 & x & y \end{pmatrix} , \\ 
    & f = \begin{pmatrix} x & y & x+y \\ 1 & 0 & 1 \\ 0 & 1 & 1 \end{pmatrix} .
\end{align*}
Applying \Cref{about Psi(1)} with the maps 
\[\begin{tikzcd}
	{\unit } & \cdots & 0 & 0 & k & 0 & \cdots \\
	S & \cdots & {M_0 \oplus M_1 \oplus M_{\infty}} & R & k & 0 & \cdots
	\arrow["\zeta"', from=1-1, to=2-1]
	\arrow["{=}"{description}, draw=none, from=1-1, to=2-2]
	\arrow[from=1-2, to=1-3]
	\arrow[from=1-3, to=1-4]
	\arrow[from=1-3, to=2-3]
	\arrow[from=1-4, to=1-5]
	\arrow[from=1-4, to=2-4]
	\arrow[from=1-5, to=1-6]
	\arrow["1", from=1-5, to=2-5]
	\arrow[from=1-6, to=1-7]
	\arrow[from=1-6, to=2-6]
	\arrow[from=2-2, to=2-3]
	\arrow[from=2-3, to=2-4]
	\arrow[from=2-4, to=2-5]
	\arrow[from=2-5, to=2-6]
	\arrow[from=2-6, to=2-7]
\end{tikzcd}\]
and 
\[\begin{tikzcd}
	{\Psi^G(S) } & \cdots & 0 & 0 & k & 0 & \cdots \\
	{\unit } & \cdots & 0 & 0 & k & 0 & \cdots
	\arrow["\sigma"', from=1-1, to=2-1]
	\arrow["{=}"{description}, draw=none, from=1-1, to=2-2]
	\arrow[from=1-2, to=1-3]
	\arrow[from=1-3, to=1-4]
	\arrow[from=1-3, to=2-3]
	\arrow[from=1-4, to=1-5]
	\arrow[from=1-4, to=2-4]
	\arrow[from=1-5, to=1-6]
	\arrow["1", from=1-5, to=2-5]
	\arrow[from=1-6, to=1-7]
	\arrow[from=1-6, to=2-6]
	\arrow[from=2-2, to=2-3]
	\arrow[from=2-3, to=2-4]
	\arrow[from=2-4, to=2-5]
	\arrow[from=2-5, to=2-6]
	\arrow[from=2-6, to=2-7]
\end{tikzcd}\]
we find that $\kappa(\mc M(E)) \cong S$. Note that $S$ is not compact in $\mc T(E)$ --- if it were then $\Psi^{E}(S)$ would be compact in $\D(k)$ as well. By \Cref{residue regularity at closed points}, this implies that $\kappa(\mc M(E))$ is not compact in the stalk category $\mc T(E)_{\mc M(E)}$. Thus, $\mc T(E)$ is \emph{not} residually regular at the closed point $\mc M(E)$. 
\end{example}

The following purely group theoretical result is standard. 
\begin{lemma}\label{group theory fact}
    If $G$ is a finite $2$-group, then at least one of the following things are true: 
    \begin{itemize}
        \item $G$ is cyclic. 
        \item $G$ has a subgroup isomorphic to $C_2 \times C_2$.
        \item $G$ has a subgroup isomorphic to $Q_8$. 
    \end{itemize}
\end{lemma}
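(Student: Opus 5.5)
The plan is to argue by strong induction on $|G|$, using only elementary facts about $2$-groups. Suppose $G$ is a non-cyclic $2$-group with no subgroup isomorphic to $C_2\times C_2$. The aim is to produce a subgroup isomorphic to $Q_8$.

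The first step is to show that $G$ has a unique subgroup of order $2$. Every nontrivial $2$-group has nontrivial centre, so the centre contains an involution $z$. If $t$ were an involution with $t\neq z$, then $t$ commutes with $z$. Hence $\langle z,t\rangle\cong C_2\times C_2$, which is excluded. So $z$ is the only involution in $G$.

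The second step is to apply induction to a maximal subgroup $M\lneq G$, which has index $2$ and is normal. If $M$ has a subgroup isomorphic to $C_2\times C_2$ or $Q_8$, we are done, since that subgroup lies in $G$. Otherwise, by the induction hypothesis applied to $M$, the subgroup $M$ is cyclic, say $M=\langle a\rangle$ of order $2^{m}$. If $m\le 1$, then $|G|\le 4$, and a non-cyclic group of order $4$ is $C_2\times C_2$, a contradiction. So we may take $m\ge 2$.

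The third step handles a cyclic subgroup of index $2$. Choose $b\in G\setminus M$. Then $b^2\in M$ and conjugation by $b$ gives an automorphism $a\mapsto a^{r}$ with $r^2\equiv 1 \pmod{2^{m}}$. The main obstacle is the case analysis over the possible values of $r$, namely $r\in\{1,-1,2^{m-1}\pm1\}$; I would use the standard classification of $2$-groups with a cyclic maximal subgroup.
\begin{itemize}
\item If $r=1$, then $G$ is abelian and non-cyclic, so it contains two distinct involutions. This contradicts the first step.
\item If $r=2^{m-1}+1$ (modular case, $m\ge3$), or $r=-1$ with $b$ an involution (dihedral case), then $G$ contains an involution outside $\langle a\rangle$. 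This again contradicts uniqueness.
\item If $r=2^{m-1}-1$ (semidihedral case), then some element $ba^{j}$ is an involution outside $\langle a\rangle$, and the same contradiction follows.
\item The only remaining case is the generalised quaternion group, where $r=-1$ and $b^2=a^{2^{m-1}}$. There, $\langle a^{2^{m-2}},b\rangle\cong Q_8$, as required.
\end{itemize}

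An alternative that avoids the case analysis is to quote the classical fact that a $p$-group with a unique subgroup of order $p$ is cyclic or generalised quaternion (Burnside). Combined with the first step, this gives the result immediately, because every generalised quaternion group $Q_{2^n}$ with $n\ge3$ contains $Q_8$.
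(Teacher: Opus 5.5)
Your proposal is correct, and the ``alternative'' in your last paragraph is exactly the paper's proof. The paper takes a central involution to show that a $2$-group with no $C_2\times C_2$ subgroup has a unique involution, just as in your first step. It then quotes Burnside's theorem (Hall, Theorem 12.5.2) that such a group is cyclic or generalised quaternion, and notes that generalised quaternion groups contain $Q_8$.

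Your main route is genuinely different: it proves the $p=2$ case of Burnside's theorem inline. Induction on $|G|$ reduces you to a group with a cyclic maximal subgroup $\langle a\rangle$. You then find an involution outside $\langle a\rangle$ in every case except the generalised quaternion one, where $\langle a^{2^{m-2}},b\rangle\cong Q_8$. This is essentially the standard proof of Burnside's theorem itself, so it buys self-containment at the cost of length.

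You also do not need to quote the full classification of $2$-groups with a cyclic maximal subgroup. Every bullet follows from two facts: $b^2\in\langle a\rangle$ is fixed by conjugation by $b$, and $(ba^j)^2=b^2a^{j(r+1)}$. For example, when $r=-1$ these give $b^2\in\{1,a^{2^{m-1}}\}$ and $(ba^j)^2=b^2$ for all $j$, which is exactly your dihedral/quaternion dichotomy. When $r=2^{m-1}\pm1$ with $m\geq 3$, they let you solve for $j$ with $(ba^j)^2=1$.

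The paper's citation is shorter. Your version makes the lemma independent of the group-theory literature.
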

\begin{proof}
    If $G$ does not contain a subgroup isomorphic to $C_2 \times C_2$, then $G$ must have a unique subgroup of order $2$ (since its centre is nontrivial). Then, by application of \cite[Theorem 12.5.2]{hallgrouptheory} (originally in \cite[Section 105]{burnsidegrouptheory}), the group $G$ must either be cyclic or be a generalised quaternion group. In the latter case, $G$ certainly contains a subgroup isomorphic to $Q_8$. 
\end{proof}

\begin{theorem}\label{modular rep theorem}
    Let $G$ be a finite group and $k$ a field of positive characteristic $p$ dividing the order of $G$. Then $\DPerm(G;k)$ is residually regular if and only if $p=2$ and all Sylow $p$-subgroups of $G$ are cyclic. 
\end{theorem}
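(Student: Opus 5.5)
Both directions run through \Cref{restriction to subgroups}: restriction to a subgroup $H\leq G$ is a finite separable extension of finite degree, $\mc T(H)$ satisfies the steel condition and has Noetherian spectrum. Hence $\mc T(H)$ is residually regular at $\mc Q$ if and only if $\mc T(G)$ is residually regular at $\varphi_{\Res^G_H}(\mc Q)$.

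For the ``if'' direction, assume $p=2$ and let $P$ be a Sylow $2$-subgroup, so $P\cong C_{2^n}$. I will use that $\varphi_{\Res^G_P}$ is surjective onto $\Spc(\mc T(G)^c)$. This holds because every point of $\Spc(\mc T(G)^c)$ lies in the image of restriction to a Sylow subgroup. The reason is that $k(G/P)$ has index prime to $p$, so $\unit$ is a direct summand of $k(G/P)$, hence $\supp(k(G/P))$ is the whole spectrum; by the standard result for separable extensions the image of $\varphi_{\Res^G_P}$ is $\supp(A_P)$. Given this, every point $\mc P$ is $\varphi_{\Res^G_P}(\mc Q)$ for some $\mc Q$. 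By \Cref{residue regularity of C_p^n}, $\mc T(P)$ is residually regular at $\mc Q$, so $\mc T(G)$ is residually regular at $\mc P$ by \Cref{restriction to subgroups}.

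For the ``only if'' direction, suppose $\mc T(G)$ is residually regular and let $P$ be a Sylow $p$-subgroup. By \Cref{restriction to subgroups}, $\mc T(H)$ is residually regular for every subgroup $H\leq G$, since every point of $\Spc(\mc T(H)^c)$ maps to a point of $\Spc(\mc T(G)^c)$. It therefore suffices to find a subgroup $H$ of $P$ and a point of $\Spc(\mc T(H)^c)$ at which $\mc T(H)$ is not residually regular. I split into cases.
\begin{enumerate}
\item If $p$ is odd, take $H\cong C_p\leq P$. By \Cref{cyclic group of order p}(b), $\mc T(C_p)$ is not residually regular at $\mc M(C_p)$.
\item If $p=2$ and $P$ is not cyclic, \Cref{group theory fact} gives two subcases.
\begin{enumerate}
\item If $P$ contains $C_2\times C_2$, use \Cref{klein four}.
\item Otherwise $P$ contains $Q_8$. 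The Frattini subgroup of $Q_8$ is its centre $Z\cong C_2$, and $Q_8/Z\cong C_2\times C_2$. \Cref{inflation induces an equivalence} gives $\mc T(Q_8)_{\mc M(Q_8)}\simeq \mc T(E)_{\mc M(E)}$. Residual regularity at a point is defined via the stalk category, so $\mc T(E)$ being not residually regular at $\mc M(E)$ by \Cref{klein four} transfers to $\mc T(Q_8)$ at $\mc M(Q_8)$.
\end{enumerate}
\end{enumerate}

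I expect the main obstacle to be the $Q_8$ case. One must check that the inflation equivalence carries the unique closed point and the homological primes over it compatibly, so that the residue objects correspond. This should follow because the definition only involves the local category and its unique closed point, and residual regularity is intrinsic to a local tt-category.

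A secondary point is the surjectivity of $\varphi_{\Res^G_P}$. I would justify it by the image formula $\im(\varphi_{F_A})=\supp(A)$ for separable extensions, together with the fact that $\unit$ is a direct summand of $k(G/P)$. If the cleanest reference is in \cite{TT-perm}, I would cite it from there instead.
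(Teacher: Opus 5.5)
Your proposal is correct and follows essentially the same route as the paper. Surjectivity of $\varphi_{\Res^G_P}$ together with \Cref{restriction to subgroups} transfers the question to subgroups of a Sylow subgroup; $C_{2^n}$ handles the positive direction, and $C_p$, $C_2\times C_2$ and $Q_8$ (via the Frattini inflation equivalence of stalks) handle the negative one.

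There is one minor difference. You justify surjectivity via $\im(\varphi_{F_A})=\supp(A)$ and the splitting of $\unit$ off $k(G/P)$ (since $[G:P]$ is invertible in $k$), whereas the paper simply cites \cite[Proposition~3.8]{TT-perm}. Your concern about the $Q_8$ case is resolved exactly as you suggest: by \Cref{homological_spectrum_of_ttquotients}, residual regularity at $\mc P$ depends only on the stalk $\mc T_{\mc P}$ at its unique closed point, and a tt-equivalence preserves that data.
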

\begin{proof}
    For any $p$-Sylow subgroup $P$ of $G$, the map $$\varphi_{\Res^G_P} \colon \Spc(\mc T(P)^c) \to \Spc(\mc T(G)^c),$$ is surjective (see \cite[Proposition 3.8]{TT-perm}). By \Cref{restriction to subgroups}, this means that the residual regularity of $\mc T(G)$ is governed by the residual regularity of $\mc T(P)$. So we may reduce to the case where $G$ is a $p$-group. 

   First, suppose $p =2$. If $G$ is cyclic, i.e.\@ $G \cong C_{2^n}$, then we already know that $\mc T(G)$ is residually regular by \Cref{residue regularity of C_p^n}. So, suppose that $G$ is a non-cyclic $2$-group. By \Cref{group theory fact}, $G$ must have a subgroup $H$ with $H \cong C_2 \times C_2$ or $H \cong Q_8$. In \Cref{klein four}, we found that $\mc T(C_2 \times C_2)$ is not residually regular at its closed point $\mc M(C_2 \times C_2)$. Now, note that $Q_8/N \cong C_2 \times C_2$ for the Frattini subgroup $N$ of $Q_8$ and therefore \Cref{inflation induces an equivalence} implies that 
   \begin{equation*}
       \mc T(C_2 \times C_2)_{\mc M(C_2 \times C_2)} \cong \mc T(Q_8)_{\mc M(Q_8)}. 
   \end{equation*}
   Therefore, $\mc T(Q_8)$ is not residually regular at its closed point $\mc M(Q_8)$. We conclude that the category $\mc T(H)$ is not residually regular and so \Cref{restriction to subgroups} implies that $\mc T(G)$ cannot be residually regular either.

   Finally, when $p$ is odd, we simply note that $G$ has a subgroup isomorphic to $C_p$, which is not residually regular as noted in \Cref{cyclic group of order p}. So again we conclude by \Cref{restriction to subgroups}. 
\end{proof}

\begin{remark}
    Suppose that $G$ and $k$ are such that $\DPerm(G;k)$ is not residually regular. Tracing through the proof of \Cref{modular rep theorem}, one can show that the point(s) at which $\DPerm(G;k)$ is not residually regular are all located in the complement of the cohomological open $V_G$. As mentioned in \Cref{conjecture KInj is regular}, the expectation is that all points in the cohomological open, which corresponds to the spectrum of $\K\Inj(kG)$, are residually regular.
\end{remark}

\printbibliography

@article {Balmer-Favi,
    AUTHOR = {Balmer, Paul and Favi, Giordano},
     TITLE = {Generalized tensor idempotents and the telescope conjecture},
   JOURNAL = {Proc. Lond. Math. Soc. (3)},
  FJOURNAL = {Proceedings of the London Mathematical Society. Third Series},
    VOLUME = {102},
      YEAR = {2011},
    NUMBER = {6},
     PAGES = {1161--1185},
      ISSN = {0024-6115},
   MRCLASS = {18E30 (14F05 55P60)},
  MRNUMBER = {2806103},
MRREVIEWER = {Fernando Muro},
       DOI = {10.1112/plms/pdq050},
       URL = {https://doi.org/10.1112/plms/pdq050},
}

@article {Rouquier,
    AUTHOR = {Rouquier, Rapha\"{e}l},
     TITLE = {Dimensions of triangulated categories},
   JOURNAL = {J. K-Theory},
  FJOURNAL = {Journal of K-Theory. K-Theory and its Applications in Algebra,
              Geometry, Analysis \& Topology},
    VOLUME = {1},
      YEAR = {2008},
    NUMBER = {2},
     PAGES = {193--256},
      ISSN = {1865-2433},
   MRCLASS = {18E30 (14F05)},
  MRNUMBER = {2434186},
MRREVIEWER = {Ioannis Emmanouil},
       DOI = {10.1017/is007011012jkt010},
       URL = {https://doi.org/10.1017/is007011012jkt010},
}

@article{BKSruminations,
   title={Tensor-triangular fields: ruminations},
   volume={25},
   ISSN={1420-9020},
   url={http://dx.doi.org/10.1007/s00029-019-0454-2},
   DOI={10.1007/s00029-019-0454-2},
   number={1},
   journal = {Selecta Math.},
   fjournal={Selecta Mathematica},
   publisher={Springer Science and Business Media LLC},
   author={Balmer, Paul and Krause, Henning and Stevenson, Greg},
   year={2019}
   }

@article{Balmernilpotence,
   title={Nilpotence theorems via homological residue fields},
   volume={2},
   ISSN={2576-7658},
   url={http://dx.doi.org/10.2140/tunis.2020.2.359},
   DOI={10.2140/tunis.2020.2.359},
   number={2},
   journal = {Tunis. J. Math.},
   fjournal={Tunisian Journal of Mathematics},
   publisher={Mathematical Sciences Publishers},
   author={Balmer, Paul},
   year={2020},
   pages={359–378} }

@article{BalmerCameron,
   title={Computing homological residue fields in algebra and topology},
   volume={149},
   ISSN={1088-6826},
   url={http://dx.doi.org/10.1090/proc/15412},
   DOI={10.1090/proc/15412},
   number={8},
   journal = {Proc. Amer. Math. Soc.},
   fjournal={Proceedings of the American Mathematical Society},
   publisher={American Mathematical Society (AMS)},
   author={Balmer, Paul and Cameron, James},
   year={2021},
   pages={3177–3185} }

@article{Sandersetale,
   title={A characterization of finite etale morphisms in tensor triangular geometry},
   volume={Volume 6},
   ISSN={2491-6765},
   url={http://dx.doi.org/10.46298/epiga.2022.volume6.7641},
   DOI={10.46298/epiga.2022.volume6.7641},
   journal={Épijournal de Géométrie Algébrique},
   publisher={Centre pour la Communication Scientifique Directe (CCSD)},
   author={Sanders, Beren},
   year={2022}
   }

@article{BalmerQuillenstrat,
   title={Separable extensions in tensor-triangular geometry and generalized Quillen stratification},
   volume={49},
   ISSN={1873-2151},
   url={http://dx.doi.org/10.24033/asens.2298},
   DOI={10.24033/asens.2298},
   number={4},
   journal ={Ann. Sci. Éc. Norm. Supér.},
fjournal={Annales scientifiques de l’École normale supérieure},
   publisher={Societe Mathematique de France},
   author={Balmer, Paul},
   year={2016},
   pages={907–925} }

@article {TT-perm,
    AUTHOR = {Balmer, Paul and Gallauer, Martin},
     TITLE = {The geometry of permutation modules},
   JOURNAL = {Invent. Math.},
  FJOURNAL = {Inventiones Mathematicae},
    VOLUME = {241},
      YEAR = {2025},
    NUMBER = {3},
     PAGES = {841--928},
      ISSN = {0020-9910,1432-1297},
   MRCLASS = {20C20 (13A02 18G80 20J06)},
  MRNUMBER = {4946248},
       DOI = {10.1007/s00222-025-01350-z},
       URL = {https://doi.org/10.1007/s00222-025-01350-z},
}

@article {Bregje,
    AUTHOR = {Pauwels, Bregje},
     TITLE = {Quasi-{G}alois theory in symmetric monoidal categories},
   JOURNAL = {Algebra Number Theory},
  FJOURNAL = {Algebra \& Number Theory},
    VOLUME = {11},
      YEAR = {2017},
    NUMBER = {8},
     PAGES = {1891--1920},
      ISSN = {1937-0652,1944-7833},
   MRCLASS = {18Bxx (16Gxx 18Gxx)},
  MRNUMBER = {3720934},
MRREVIEWER = {Toma\ Albu},
       DOI = {10.2140/ant.2017.11.1891},
       URL = {https://doi.org/10.2140/ant.2017.11.1891},
}

@article {BHS,
    AUTHOR = {Barthel, Tobias and Heard, Drew and Sanders, Beren},
     TITLE = {Stratification in tensor triangular geometry with applications
              to spectral {M}ackey functors},
   JOURNAL = {Camb. J. Math.},
  FJOURNAL = {Cambridge Journal of Mathematics},
    VOLUME = {11},
      YEAR = {2023},
    NUMBER = {4},
     PAGES = {829--915},
      ISSN = {2168-0930,2168-0949},
   MRCLASS = {18G80 (14F08 18F99 55P42 55P91 55U35)},
  MRNUMBER = {4650265},
       DOI = {10.4310/cjm.2023.v11.n4.a2},
       URL = {https://doi.org/10.4310/cjm.2023.v11.n4.a2},
}

@book {spectralbook,
    AUTHOR = {Dickmann, Max and Schwartz, Niels and Tressl, Marcus},
     TITLE = {Spectral spaces},
    SERIES = {New Mathematical Monographs},
    VOLUME = {35},
 PUBLISHER = {Cambridge University Press, Cambridge},
      YEAR = {2019},
      ISBN = {978-1-107-14672-3},
   MRCLASS = {54-02 (06B30 06D50 06F30 18B35 54B30 54H10)},
  MRNUMBER = {3929704},
MRREVIEWER = {Tomasz\ Kubiak},
       DOI = {10.1017/9781316543870},
       URL = {https://doi.org/10.1017/9781316543870},
}

@article {supportsandfiltrations,
    AUTHOR = {Balmer, Paul},
     TITLE = {Supports and filtrations in algebraic geometry and modular
              representation theory},
   JOURNAL = {Amer. J. Math.},
  FJOURNAL = {American Journal of Mathematics},
    VOLUME = {129},
      YEAR = {2007},
    NUMBER = {5},
     PAGES = {1227--1250},
      ISSN = {0002-9327,1080-6377},
   MRCLASS = {18E30 (20C20)},
  MRNUMBER = {2354319},
MRREVIEWER = {Amnon\ Neeman},
       DOI = {10.1353/ajm.2007.0030},
       URL = {https://doi.org/10.1353/ajm.2007.0030},
}

@misc{kostas,
      title={Intrinsic homological algebra for triangulated categories}, 
      author={Panagiotis Kostas and Chrysostomos Psaroudakis and Jorge Vitória},
      year={2025},
      eprint={2512.18417},
      archivePrefix={arXiv},
      primaryClass={math.RT},
      url={https://arxiv.org/abs/2512.18417}, 
}

@article {Orlov,
    AUTHOR = {Orlov, Dmitri},
     TITLE = {Smooth and proper noncommutative schemes and gluing of {DG}
              categories},
   JOURNAL = {Adv. Math.},
  FJOURNAL = {Advances in Mathematics},
    VOLUME = {302},
      YEAR = {2016},
     PAGES = {59--105},
      ISSN = {0001-8708,1090-2082},
   MRCLASS = {14F05 (16E45 18E30)},
  MRNUMBER = {3545926},
MRREVIEWER = {Shintarou\ Yanagida},
       DOI = {10.1016/j.aim.2016.07.014},
       URL = {https://doi.org/10.1016/j.aim.2016.07.014},
}

@misc{BIKP,
      title={Locally dualisable modular representations and local regularity}, 
      author={Dave Benson and Srikanth Iyengar and Henning Krause and Julia Pevtsova},
      year={2024},
      eprint={2404.14672},
      archivePrefix={arXiv},
      primaryClass={math.RT},
      url={https://arxiv.org/abs/2404.14672}, 
}

@article {GregGreenlees,
    AUTHOR = {Greenlees, J. P. C. and Stevenson, Greg},
     TITLE = {Morita theory and singularity categories},
   JOURNAL = {Adv. Math.},
  FJOURNAL = {Advances in Mathematics},
    VOLUME = {365},
      YEAR = {2020},
     PAGES = {107055, 51},
      ISSN = {0001-8708,1090-2082},
   MRCLASS = {55P43 (13D09 16E45 20J06 55P62)},
  MRNUMBER = {4065715},
MRREVIEWER = {Steffen\ Sagave},
       DOI = {10.1016/j.aim.2020.107055},
       URL = {https://doi.org/10.1016/j.aim.2020.107055},
}

@article {Neeman2026,
    AUTHOR = {Neeman, Amnon},
     TITLE = {Triangulated categories with a single compact generator, and
              two {B}rown representability theorems},
   JOURNAL = {Invent. Math.},
  FJOURNAL = {Inventiones Mathematicae},
    VOLUME = {244},
      YEAR = {2026},
    NUMBER = {2},
     PAGES = {531--616},
      ISSN = {0020-9910,1432-1297},
   MRCLASS = {18E30 (14F05)},
  MRNUMBER = {5012578},
       DOI = {10.1007/s00222-025-01401-5},
       URL = {https://doi.org/10.1007/s00222-025-01401-5},
}

@misc{BarwickLawson,
      title={Regularity of structured ring spectra and localization in {K}-theory}, 
      author={Clark Barwick and Tyler Lawson},
      year={2014},
      eprint={1402.6038},
      archivePrefix={arXiv},
      primaryClass={math.KT},
      url={https://arxiv.org/abs/1402.6038}, 
}

@article {tt-rings,
    AUTHOR = {Balmer, Paul},
     TITLE = {Separability and triangulated categories},
   JOURNAL = {Adv. Math.},
  FJOURNAL = {Advances in Mathematics},
    VOLUME = {226},
      YEAR = {2011},
    NUMBER = {5},
     PAGES = {4352--4372},
      ISSN = {0001-8708,1090-2082},
   MRCLASS = {18E30 (13B40 16H05 18C20)},
  MRNUMBER = {2770453},
MRREVIEWER = {Matthias\ K\"{u}nzer},
       DOI = {10.1016/j.aim.2010.12.003},
       URL = {https://doi.org/10.1016/j.aim.2010.12.003},
}

@inproceedings{ICMpaper,
    AUTHOR = {Balmer, Paul},
     TITLE = {Tensor triangular geometry},
 BOOKTITLE = {Proceedings of the {I}nternational {C}ongress of
              {M}athematicians. {V}olume {II}},
     PAGES = {85--112},
 PUBLISHER = {Hindustan Book Agency, New Delhi},
      YEAR = {2010},
   MRCLASS = {18E30 (14F05 19G12 55P42)},
  MRNUMBER = {2827786},
MRREVIEWER = {Sunil\ K.\ Chebolu},
}

@article {restrict,
    AUTHOR = {Balmer, Paul and Dell'Ambrogio, Ivo and Sanders, Beren},
     TITLE = {Restriction to finite-index subgroups as \'{e}tale extensions
              in topology, {KK}-theory and geometry},
   JOURNAL = {Algebr. Geom. Topol.},
  FJOURNAL = {Algebraic \& Geometric Topology},
    VOLUME = {15},
      YEAR = {2015},
    NUMBER = {5},
     PAGES = {3025--3047},
      ISSN = {1472-2747,1472-2739},
   MRCLASS = {55P91 (13D09 14F05 19K35 19L47)},
  MRNUMBER = {3426702},
MRREVIEWER = {Andr\'{e}\ G.\ Henriques},
       DOI = {10.2140/agt.2015.15.3025},
       URL = {https://doi.org/10.2140/agt.2015.15.3025},
}

@article{BHScomp,
   title={Stratification and the comparison between homological and tensor triangular support},
   volume={74},
   ISSN={1464-3847},
   url={http://dx.doi.org/10.1093/qmath/haac040},
   DOI={10.1093/qmath/haac040},
   number={2},
   journal ={Q. J. Math.},
   fjournal={The Quarterly Journal of Mathematics},
   publisher={Oxford University Press (OUP)},
   author={Barthel, Tobias and Heard, Drew and Sanders, Beren},
   year={2023},
   pages={747–766} }

@article {finitedegree,
    AUTHOR = {Balmer, Paul},
     TITLE = {Splitting tower and degree of tt-rings},
   JOURNAL = {Algebra Number Theory},
  FJOURNAL = {Algebra \& Number Theory},
    VOLUME = {8},
      YEAR = {2014},
    NUMBER = {3},
     PAGES = {767--779},
      ISSN = {1937-0652,1944-7833},
   MRCLASS = {18E30 (20J05)},
  MRNUMBER = {3218809},
MRREVIEWER = {Frank\ Lucas\ Wolcott},
       DOI = {10.2140/ant.2014.8.767},
       URL = {https://doi.org/10.2140/ant.2014.8.767},
}

@article {bigsupport,
    AUTHOR = {Balmer, Paul},
     TITLE = {Homological support of big objects in tensor-triangulated
              categories},
   JOURNAL = {J. \'{E}c. polytech. Math.},
  FJOURNAL = {Journal de l'\'{E}cole polytechnique. Math\'{e}matiques},
    VOLUME = {7},
      YEAR = {2020},
     PAGES = {1069--1088},
      ISSN = {2429-7100,2270-518X},
   MRCLASS = {18G80 (18M05 20J05 55U35)},
  MRNUMBER = {4136434},
MRREVIEWER = {Bo\ Lu},
       DOI = {10.5802/jep.135},
       URL = {https://doi.org/10.5802/jep.135},
}

@article {Neemanrepresentability,
    AUTHOR = {Neeman, Amnon},
     TITLE = {The {G}rothendieck duality theorem via {B}ousfield's
              techniques and {B}rown representability},
   JOURNAL = {J. Amer. Math. Soc.},
  FJOURNAL = {Journal of the American Mathematical Society},
    VOLUME = {9},
      YEAR = {1996},
    NUMBER = {1},
     PAGES = {205--236},
      ISSN = {0894-0347,1088-6834},
   MRCLASS = {18E30 (14F05)},
  MRNUMBER = {1308405},
MRREVIEWER = {Luca\ Barbieri Viale},
       DOI = {10.1090/S0894-0347-96-00174-9},
       URL = {https://doi.org/10.1090/S0894-0347-96-00174-9},
}

@article {Krause,
    AUTHOR = {Krause, Henning},
     TITLE = {Smashing subcategories and the telescope conjecture---an
              algebraic approach},
   JOURNAL = {Invent. Math.},
  FJOURNAL = {Inventiones Mathematicae},
    VOLUME = {139},
      YEAR = {2000},
    NUMBER = {1},
     PAGES = {99--133},
      ISSN = {0020-9910,1432-1297},
   MRCLASS = {55P42 (18E30 55P60 55P65)},
  MRNUMBER = {1728877},
MRREVIEWER = {Paul\ G.\ Goerss},
       DOI = {10.1007/s002229900022},
       URL = {https://doi.org/10.1007/s002229900022},
}

@article {SpecArtinMotives,
    AUTHOR = {Balmer, Paul and Gallauer, Martin},
     TITLE = {The spectrum of {A}rtin motives},
   JOURNAL = {Trans. Amer. Math. Soc.},
  FJOURNAL = {Transactions of the American Mathematical Society},
    VOLUME = {378},
      YEAR = {2025},
    NUMBER = {3},
     PAGES = {1733--1754},
      ISSN = {0002-9947,1088-6850},
   MRCLASS = {14F42 (18F99 18G90 20C20)},
  MRNUMBER = {4866349},
MRREVIEWER = {Oliver\ R\"{o}ndigs},
       DOI = {10.1090/tran/9306},
       URL = {https://doi.org/10.1090/tran/9306},
}

@incollection {homotopyhandbook,
    AUTHOR = {Balmer, Paul},
     TITLE = {A guide to tensor-triangular classification},
 BOOKTITLE = {Handbook of homotopy theory},
    SERIES = {CRC Press}, %/Chapman Hall Handb. Math. Ser.
     PAGES = {145--162},
 PUBLISHER = {CRC Press, Boca Raton, FL},
      YEAR = {2020},
      ISBN = {978-0-815-36970-7},
   MRCLASS = {55U35 (18G80 55P42)},
  MRNUMBER = {4197984},
MRREVIEWER = {Philippe\ Gaucher},
}

@incollection {Stevensontour,
    AUTHOR = {Stevenson, Greg},
     TITLE = {A tour of support theory for triangulated categories through
              tensor triangular geometry},
 BOOKTITLE = {Building bridges between algebra and topology},
    SERIES = {Adv. Courses Math. CRM Barcelona},
     PAGES = {63--101},
 PUBLISHER = {Birkh\"{a}user/Springer, Cham},
      YEAR = {2018},
   MRCLASS = {18E30 (55P60)},
  MRNUMBER = {3793858},
MRREVIEWER = {Beren\ Sanders},
}

@article {BalmerOG,
    AUTHOR = {Balmer, Paul},
     TITLE = {The spectrum of prime ideals in tensor triangulated
              categories},
   JOURNAL = {J. Reine Angew. Math.},
  FJOURNAL = {Journal f\"{u}r die Reine und Angewandte Mathematik. [Crelle's
              Journal]},
    VOLUME = {588},
      YEAR = {2005},
     PAGES = {149--168},
      ISSN = {0075-4102,1435-5345},
   MRCLASS = {18E30 (55P99)},
  MRNUMBER = {2196732},
MRREVIEWER = {Amnon\ Neeman},
       DOI = {10.1515/crll.2005.2005.588.149},
       URL = {https://doi.org/10.1515/crll.2005.2005.588.149},
}

@incollection {BGconnections,
    AUTHOR = {Balmer, Paul and Gallauer, Martin},
     TITLE = {Permutation modules, {M}ackey functors, and {A}rtin motives},
 BOOKTITLE = {Representations of algebras and related structures},
    SERIES = {EMS Ser. Congr. Rep.},
     PAGES = {37--75},
 PUBLISHER = {EMS Press, Berlin},
      YEAR = {2023},
      ISBN = {978-3-98547-054-9},
   MRCLASS = {14F08 (12F10 14C15 18G80 20C20)},
  MRNUMBER = {4693637},
}

@article {Stevensonabsflatrings,
    AUTHOR = {Stevenson, Greg},
     TITLE = {Derived categories of absolutely flat rings},
   JOURNAL = {Homology Homotopy Appl.},
  FJOURNAL = {Homology, Homotopy and Applications},
    VOLUME = {16},
      YEAR = {2014},
    NUMBER = {2},
     PAGES = {45--64},
      ISSN = {1532-0073,1532-0081},
   MRCLASS = {13D09},
  MRNUMBER = {3234500},
MRREVIEWER = {Sunil\ K.\ Chebolu},
       DOI = {10.4310/HHA.2014.v16.n2.a3},
       URL = {https://doi.org/10.4310/HHA.2014.v16.n2.a3},
}

@article {infdegttrings,
    AUTHOR = {G\'{o}mez, Juan Omar},
     TITLE = {A family of infinite degree tt-rings},
   JOURNAL = {Bull. Lond. Math. Soc.},
  FJOURNAL = {Bulletin of the London Mathematical Society},
    VOLUME = {56},
      YEAR = {2024},
    NUMBER = {2},
     PAGES = {518--522},
      ISSN = {0024-6093,1469-2120},
   MRCLASS = {18G80},
  MRNUMBER = {4711566},
MRREVIEWER = {Bo\ Lu},
       DOI = {10.1112/blms.12945},
       URL = {https://doi.org/10.1112/blms.12945},
}

@misc{nervesofsteelcounterex,
      title={Geometric Points in Tensor Triangular Geometry}, 
      author={Tobias Barthel and Logan Hyslop and Maxime Ramzi},
      year={2026},
      eprint={2603.25664},
      archivePrefix={arXiv},
      primaryClass={math.AT},
      url={https://arxiv.org/abs/2603.25664}, 
}

@phdthesis{Pauwelsphd,
  author = {Bregje Pauwels},
  title = {Quasi-Galois theory in triangulated categories},
  school = {University of California, Los Angeles},
  year = {2015}
}

@misc{regandsepmonads,
      title={Homological Aspects of Separable Extensions of Triangulated Categories}, 
      author={Miltiadis Karakikes and Panagiotis Kostas},
      year={2026},
      eprint={2604.17996},
      archivePrefix={arXiv},
      primaryClass={math.RT},
      url={https://arxiv.org/abs/2604.17996}, 
}

@article {Krausecompletion,
    AUTHOR = {Krause, Henning},
     TITLE = {Completing perfect complexes},
      NOTE = {With appendices by Tobias Barthel and Bernhard Keller},
   JOURNAL = {Math. Z.},
  FJOURNAL = {Mathematische Zeitschrift},
    VOLUME = {296},
      YEAR = {2020},
    NUMBER = {3-4},
     PAGES = {1387--1427},
      ISSN = {0025-5874,1432-1823},
   MRCLASS = {18G80 (14F08 16E35 55P42)},
  MRNUMBER = {4159834},
MRREVIEWER = {Bo\ Lu},
       DOI = {10.1007/s00209-020-02490-z},
       URL = {https://doi.org/10.1007/s00209-020-02490-z},
}

@misc{singcats,
      title={Bounded $t$-structures, finitistic dimensions, and singularity categories of triangulated categories}, 
      author={Rudradip Biswas and Hongxing Chen and Kabeer Manali Rahul and Chris J. Parker and Junhua Zheng},
      year={2024},
      eprint={2401.00130},
      archivePrefix={arXiv},
      primaryClass={math.RA},
      url={https://arxiv.org/abs/2401.00130}, 
}

@article {BKSOG,
    AUTHOR = {Balmer, Paul and Krause, Henning and Stevenson, Greg},
     TITLE = {The frame of smashing tensor-ideals},
   JOURNAL = {Math. Proc. Cambridge Philos. Soc.},
  FJOURNAL = {Mathematical Proceedings of the Cambridge Philosophical
              Society},
    VOLUME = {168},
      YEAR = {2020},
    NUMBER = {2},
     PAGES = {323--343},
      ISSN = {0305-0041,1469-8064},
   MRCLASS = {18M99 (16D90)},
  MRNUMBER = {4064108},
MRREVIEWER = {Sultan\ Eylem\ Toksoy},
       DOI = {10.1017/s0305004118000725},
       URL = {https://doi.org/10.1017/s0305004118000725},
}

@article {Thomasonclassific,
    AUTHOR = {Thomason, R. W.},
     TITLE = {The classification of triangulated subcategories},
   JOURNAL = {Compositio Math.},
  FJOURNAL = {Compositio Mathematica},
    VOLUME = {105},
      YEAR = {1997},
    NUMBER = {1},
     PAGES = {1--27},
      ISSN = {0010-437X,1570-5846},
   MRCLASS = {18E30 (18F30)},
  MRNUMBER = {1436741},
MRREVIEWER = {Steven\ E.\ Landsburg},
       DOI = {10.1023/A:1017932514274},
       URL = {https://doi.org/10.1023/A:1017932514274},
}

@misc{singcatsepext,
      title={The singularity category of a separable extension}, 
      author={Charalampos Verasdanis},
      year={2026},
      eprint={2605.08868},
      archivePrefix={arXiv},
      primaryClass={math.CT},
      url={https://arxiv.org/abs/2605.08868}, 
}

@book{burnsidegrouptheory,
  title={Theory of groups of finite order (2nd edition)},
  author={William Burnside},
  %isbn={9780387906881},
  %lccn={82000733},
  %series={Graduate Texts in Mathematics},
  %url={https://books.google.com/books?id=PMqb2DppvCsC},
  year={1911},
  publisher={Cambridge University Press, Cambridge}
}

@book{hallgrouptheory,
  title={The theory of groups (2nd edition)},
  author={Hall, Jr., Marshall},
  %isbn={9780387906881},
  %lccn={82000733},
  %series={Graduate Texts in Mathematics},
  %url={https://books.google.com/books?id=PMqb2DppvCsC},
  year={1976},
  publisher={AMS Chelsea Publishing, Providence}
}

\end{document}